\newtheorem{theorem}{Theorem}[section]
\newtheorem{lemma}[theorem]{Lemma}
\newtheorem{proposition}[theorem]{Proposition}
\theoremstyle{definition}
\newtheorem{definition}[theorem]{Definition}
\theoremstyle{remark}
\newtheorem{remark}[theorem]{Remark}
\numberwithin{equation}{section}
\begin{document}

\def\a{\alpha}
\def\b{\beta}
\def\d{\delta}
\def\g{\gamma}
\def\l{\lambda}
\def\o{\omega}
\def\s{\sigma}
\def\t{\tau}
\def\th{\theta}
\def\r{\rho}
\def\D{\Delta}
\def\G{\Gamma}
\def\O{\Omega}
\def\e{\epsilon}
\def\p{\phi}
\def\P{\Phi}
\def\S{\Psi}
\def\E{\eta}
\def\m{\mu}
\def\grad{\nabla}
\def\bar{\overline}
\newcommand{\A}{\mathcal{A}}
\newcommand{\reals}{\mathbb{R}}
\newcommand{\naturals}{\mathbb{N}}
\newcommand{\ints}{\mathbb{Z}}
\newcommand{\complex}{\mathbb{C}}
\newcommand{\rationals}{\mathbb{Q}}
\newcommand{\innerprod}[1]{\left\langle#1\right\rangle}
\newcommand{\dualprod}[1]{\left\langle#1\right\rangle}
\newcommand{\norm}[1]{\left\|#1\right\|}
\newcommand{\abs}[1]{\left|#1\right|}

\newcommand{\RR}{\mathbb{R}} 
\newcommand{\CC}{\mathbb{C}} 
\newcommand{\NN}{\mathbb{N}} 
\newcommand{\ZZ}{\mathbb{Z}} 
\newcommand{\KK}{\mathbb{K}} 
\newcommand{\QQ}{\mathbb{Q}} 
\newcommand{\TT}{\mathbb{T}} 

\newcommand{\FF}{\mathcal{F}}
\newcommand{\LL}{\mathcal{L}}

\newcommand{\eps}{\varepsilon}
\newcommand{\vrho}{\varrho}
\newcommand{\vphi}{\varphi}

\newcommand{\wkonv}[2]{#1\rightharpoonup #2} 
\newcommand{\starkonv}[2]{#1\stackrel{\ast}{\rightharpoonup} #2} 

\newcommand{\vect}[2]{\left( \begin{array}{c} #1 \\ #2 \end{array}\right)} 

\newcommand{\set}[1]{\left\lbrace#1\right\rbrace} 
\newcommand{\normhilb}[1]{\left\bracevert#1\right\bracevert} 
\newcommand{\betrag}[1]{\left\lvert#1\right\lvert} 
\newcommand{\SP}[2]{\left\langle #1, #2\right\rangle} 

\newcommand{\ve}[1]{\mathbf{#1}} 
\newcommand{\id}{\ve I} 
\renewcommand{\d}{\:\mathrm{d}} 

\title[Coupled KdV equations of Majda and Biello]
{Global Well-posedness of a System of Nonlinearly Coupled KdV equations of Majda and Biello}

\date{September 27, 2013}
\keywords{KdV equation, global well-posedness, successive time-averaging method}
\subjclass[2010]{35B34,35Q53}


\author[Y. Guo]{Yanqiu Guo}
\address[Y. Guo]
{Department of Computer Science and Applied Mathematics \\
  Weizmann Institute of Science  \\
  Rehovot 76100, Israel.} \email{yanqiu.guo@weizmann.ac.il}

\author[K. Simon]{Konrad Simon}
\address[K. Simon]
{Department of Computer Science and Applied Mathematics \\
  Weizmann Institute of Science  \\
  Rehovot 76100, Israel.} \email{konrad.simon@weizmann.ac.il}

\author[E. S. Titi]{Edriss S. Titi}
\address[E. S. Titi]
	{Department of Mathematics \\ and Department of Mechanical and
	Aerospace Engineering \\ University of California \\ Irvine,
	CA 92697-3875, USA. \\ {\bf ALSO} \\ Department of Computer
	Science and Applied Mathematics \\ Weizmann Institute of
	Science \\ Rehovot 76100, Israel.} \email{etiti@math.uci.edu
	and edriss.titi@weizmann.ac.il}

\begin{abstract}
	This paper addresses the problem of global well-posedness of a
	coupled system of Korteweg-de Vries equations, derived by
	Majda and Biello in the context of nonlinear resonant
	interaction of Rossby waves, in a periodic setting in
	homogeneous Sobolev spaces $\dot H^s$, for $s\geq  0$. Our approach is based on a successive time-averaging method developed by Babin, Ilyin and Titi \cite{B-I-T}.
\end{abstract}
\maketitle






\section{Introduction}\label{S-1}

	The present manuscript is motivated by a work of Babin, Ilyin and
	Titi~\cite{B-I-T} explaining the regularization mechanism for
	the periodic Korteweg-de Vries (KdV) equation. In \cite{B-I-T}
	the authors exploit the dispersive structure which introduces
	frequency dependent fast oscillations by means of successive
	integrations by parts and time averaging. Our aim is to adapt
	this method in order to obtain analogous well-posedness
	results for certain system of coupled Korteweg-de Vries
	equations (cKdV):
\begin{equation}
  \label{int-1}
  \begin{cases}
    A_t =\alpha A_{xxx} - (AB)_x \\
    B_t = B_{xxx} - A A_x  \\
  \end{cases}
\end{equation}
	introduced by Majda and Biello
	(see~\cite{Biel,BieMaj-3,BieMaj-1,BieMaj-2} and references therein).
	This system arises in the study of nonlinear resonant interactions of equatorial baroclinic and barotropic Rossby waves, and is a model for long range interactions between the tropical and midlatitude troposphere.
	In (\ref{int-1}), $A$ is the amplitude of an equatorially confined (baroclinic) Rossby wave packet, and $B$ is the amplitude of a (barotropic) Rossby wave packet with significant energy in the midlatitudes, and $\alpha$ is a parameter close to $1$.

Several conservation laws are known for (\ref{int-1}):
\begin{equation}
  \label{int-2}
  E_1:=\int  A \d x \:, \quad E_2:=\int  B \d x
\end{equation}
	and most important for our purpose, the total energy
\begin{equation*}
  \label{int-3}
  E:=\int A^2 + B^2 \d x \:,
\end{equation*}
	which bounds the $L^2$-norm. In addition, as elaborated in \cite{BieMaj-1}, system~(\ref{int-1}), enjoys a Hamiltonian structure, where the Hamiltonian is given by
\begin{equation*}
  \label{int-4}
  H:=\frac{1}{2} \int \alpha A_x^2 + B_x^2 +  A^2B \d x \:.
\end{equation*}
	In contrast to the 1-d KdV no more conservation laws are
	known for (\ref{int-1}), so it is not necessarily completely integrable. However, to show the global well-posedness of weak solutions of (\ref{int-1}), with initial data in $L^2$,
we only use the conservation of the total energy and in particular we will not take advantage of the conservation of the Hamiltonian.

In \cite{Biel}, Biello used the change of variables
	$U=\frac{1}{\sqrt{2}}(\sqrt{2}B+A)$ and
	$V=\frac{1}{\sqrt{2}}(\sqrt{2}B-A)$ to transform (\ref{int-1})  into an idealized
``symmetric" model (when $\alpha=1$)
\begin{equation}
  \label{int-1a}
  \begin{cases}
    U_t=U_{xxx}-U U_x+\frac{1}{2}(UV)_x \\
    V_t=V_{xxx}-V V_x+\frac{1}{2} (UV)_x
  \end{cases}
\end{equation}
and studied its soliton solutions.
	In this paper, we will consider system~(\ref{int-1a}) subject to periodic boundary conditions, with basic periodic domain $\TT=[0,2\pi]$; which is equivalent to consider (\ref{int-1a}) on the unit circle. Note
that there are two invariant subspaces, $U=0$ or $V=0$. In case
of $U=0$ the solution for $V$ evolves according to a standard KdV (respectively $U$ if $V=0$ is taken).

Since system~(\ref{int-1a}) is closely connected to the KdV equation $u_t=u_{xxx}+uu_x$, we now briefly review some important results concerning the KdV, with periodic boundary condition. In his seminal
	papers~\cite{Bou-1,Bou-2} Bourgain introduced a new type of
	weighted Sobolev spaces $X^{s,b}(\RR \times \TT)$ for functions in time and
    space, the so-called dispersive Sobolev spaces, which is the closure of the Schwartz space under the norm
\begin{equation}
  \label{int-5}
\norm{u}_{X^{s,b}(\RR \times \TT)} = \norm{\langle k \rangle^s\langle \tau + k^3  \rangle^b\widehat u (\tau,k)}_{L_\tau^2 l_k^2(\RR \times \ZZ)} \:,
\end{equation}
where $\langle\cdot\rangle=(1+|\cdot|^2)^{1/2}$ and
$\widehat u$ denotes the Fourier transform in space and time. These spaces reflect the fact that the Fourier transform of a solution of the
	unperturbed (dispersive) part of the KdV is supported on the
	characteristic hyperplane $\tau+k^3=0$ described by its dispersion
	relation. In fact, the $X^{s,b}$ spaces are an efficient tool to capture the phenomenon that the solutions to KdV, after localisation in time, have space-time Fourier transform supported near the characteristic surface; thus the nonlinearity does not significantly alter the space-time Fourier ``path" of the solution, at least for short time (see \cite{Tao}).
    The definition can of course be adapted to account
	for other dispersive PDEs like Schr\"{o}dinger equation. Using
	this Bourgain proved local well-posedness the KdV in $L^2(\TT)$ by
	means of Banach's Fixed Point principle. This result was
	improved by Kenig, Ponce and Vega~\cite{K-P-V}. They proved a
	sharp bilinear estimate for the norm in~(\ref{int-5}) and
	showed local well-posedness in $H^{-1/2}(\TT)$. The
	corresponding global well-posedness result in $H^{-1/2}(\TT)$
	has been proved by Colliander, Keel, Staffilani, Takaoka and
	Tao~\cite{CKSTT-1} by employing the $I$-method (or the method of almost conserved quantities); here, ``$I$" stands for a mollification operator, acting like the {\bf I}dentity on low frequencies, and like an {\bf I}ntegration operator on high frequencies. Kappeler
	and Topalov \cite{Kappeler-Topalov} were able to prove global well-posedness in
	$H^{-1}(\TT)$ by using the complete integrability of the KdV.

	The Majda-Biello system~(\ref{int-1}) is a member of a wider
	class of KdV-type systems. Another model among
	several other systems of this class is for
	example the Gear-Grimshaw system~\cite{GearGrimshaw}. In~\cite{Oh-1} Oh investigated
	system~(\ref{int-1}) by employing $X^{s,b}$-estimates and
	obtained local well-posedness results depending on the
	value of the parameter $\alpha$. For $\alpha=1$ he obtained
	local well-posedness in (a cross-product of) $H^{-1/2}(\TT)$
	and he proved local well-posedness for almost every $\alpha\in
	(0,1)$ in $H^s(\TT)$, $s>1/2$. The reason for this is that if $\alpha\not=0$
	certain nontrivial resonances occur (which can be described by
	using diophantine conditions) because the space-time Fourier transforms of solutions of the two
	linear parts of the system are supported on different
	hyperplanes described by their dispersion
	relation. Corresponding global well-posedness results have also
	been proved by Oh~\cite{Oh-2} using the $I$-method. If
	$\alpha=1$ system~(\ref{int-1}) is globally well-posed in
	$H^{-1/2}(\TT)$; while for almost every
	$\alpha\in (0,1)$ it is globally well-posed in $H^s(\TT)$, $s>5/7$.

        	In this paper we use the technique of successive
        	differentiation by parts introduced by Babin, Ilyin
        	and Titi~\cite{B-I-T} on system~(\ref{int-1a}) with
            periodic boundary condition. The first step of the method is to apply
        	the transform
\begin{equation}
  \label{int-6}
  U_k(t)=e^{-ik^3t}u_k(t) \:,\;   V_k(t)=e^{-ik^3t}v_k(t) \:, \; k\in \ZZ,
\end{equation}
	on the Fourier coefficients. This transform represents the
	action of the unitary group generated by the third derivative,
	$\Psi(t)=e^{\partial_x^3t}$, on each Fourier coefficient. In
	the terminology of quantum mechanics transform~(\ref{int-6})
	means the transition to the so-called interaction
	representation \cite{Ginibre}. This can be interpreted in terms of the spaces
	$X^{s,b}$: a function $u$ of space and time is in $X^{s,b}$ if
	and only if its interaction representation $\Psi(-t) u(t)$ is in the mixed
	Sobolev space $H_t^bH_x^s$. The
	transform~(\ref{int-6}) generates a fast rotation term into the equation, and
    then several forms of the system are derived
	using successive differentiations by parts in time (which
	correspond to integrations by parts in time) after resonances
	are singled out. The equation becomes of higher algebraic order but we
	can take advantage of smoothing properties of the higher order
	operators involved which allows less regular solutions. In principle,
    this is similar to the idea of normal forms by Shatah \cite{Sha}.
After establishing the global existence in the homogeneous Sobolev space
	$\dot H^s$ for $s>0$ by using Galerkin method, we prove uniqueness of solutions by means of the Banach's Fixed Point Theorem. In \cite{B-I-T}, for constructing a strict contraction mapping, the authors inverted a linear operator that involves the initial value. The inversion and
	a time-independent estimate on its inverse were done by finding
	an explicit solution to a boundary value problem for an
	ODE. However, for the Majda-Biello cKdV system, we
	run into the difficulty of now having to solve a system of
	1D boundary-value problem explicitly. For the sake of bypassing this obstacle, we use a proper splitting of solutions based on high and low Fourier modes, and recast the differentiation by parts procedures to terms involving high frequencies. This idea avoids treating the invertibility of a linear operator, and simply takes advantage of the time-averaging induced squeezing. Such strategy was first introduced in \cite{B-I-T} to deal with the less regular initial data in $H^s(\mathbb T)$ for $s\in [0,1/2]$. The authors of \cite{KwoOh} followed the idea from \cite{B-I-T} to obtain unconditional well-posedness of modified KdV in $H^s(\mathbb T)$ for $s\geq 1/2$. Finally, we must stress that, the present work does not aim to improve the results in \cite{Oh-1,Oh-2}; but, our purpose is to provide another example of employing the techniques in \cite{B-I-T} which is general enough to apply to other nonlinear dispersive and wave equations for establishing global well-posedness. Although the $X^{s,b}$ spaces are a powerful tool to study dispersive equations, in this paper, we simply use the standard Sobolev spaces $H^s$ in a systematic and natural manner.

	This paper is organized as follows. In
	Sections~\ref{S-2},~\ref{first-dbp} and~\ref{sec-dbp} we
	derive several forms of the cKdV (\ref{int-1a}) analogously to~\cite{B-I-T}. In
	Section~\ref{S-4} we prove global existence of a solution in the homogeneous Sobolev space
	$\dot H^s$, for $s>0$, using a Galerkin
	scheme and we establish uniform bounds for the solution on
	each finite time interval. Section~\ref{regular} is dedicated to
	regular initial data, that is $s>1/2$, where uniqueness is obtained by
	means of Banach's Contraction Principle.
	Section~\ref{irregular} addresses less regular initial data, i.e.,
	$s\in [0,1/2]$. For the sake of convenience we use
	similar notations as in~\cite{B-I-T} due to the fact that
	most of the nonlinear operators occurring in this work have
	the same mapping properties as the ones proven
	there. Therefore, throughout this work, most relevant estimates
	of nonlinear operators will be taken from the appendix section
	and their proofs can be found in~\cite{B-I-T}.

\smallskip

\section{Transformations of the system and main results}\label{S-2}
In this section, we write the cKdV (\ref{int-1a}) in terms of Fourier coefficients, and use a transform of variables in order to introduce oscillating exponentials into the nonlinear term. Based on the transformed system, we shall define a notion of (weak) solutions and state the main results of the present paper.

As mentioned in the Introduction, we consider the Majda-Biello system
\begin{equation}
  \label{eq:1}
  \begin{cases}
    U_t=U_{xxx}-U U_x+\frac{1}{2}(UV)_x \\
    V_t=V_{xxx}-V V_x+\frac{1}{2} (UV)_x \\
    U(0,x)=U^{\text{in}}(x)\:, \; V(0,x)=V^{\text{in}}(x)
  \end{cases}
\end{equation}
	where $x\in\TT=[0,2\pi]$, with periodic boundary condition $U(t,0)=U(t,2\pi)$ (the same
	for $V$). Here, $U$ and $V$ are real-valued functions. If $(U,V)$ is a smooth solution of~(\ref{eq:1}) we observe, from (\ref{int-2}),
	the conservation of the mean values, i.e.,  $$\frac{d}{dt} \int_0^{2\pi} U(t,x) dx =\frac{d}{dt} \int_0^{2\pi} V(t,x) dx=0.$$
We assume from now that the initial data and the solution both have spatial mean value zero.

Denote $\ZZ_0:=\ZZ \backslash \{0\}$. We make a Fourier expansion for $U$
\begin{equation}
  \label{eq:3}
  U(t,x)=\sum_{k\in\ZZ_0}U_k(t)e^{ikx} \:,\; U_k\in\CC \:,\; U_k(t)=\frac{1}{2\pi}\int_0^{2\pi}U(t,x)e^{-ikx}\d x \:,
  \quad k\in\ZZ_0\:,
\end{equation}
	as well as for $V$. Furthermore, we observe that
	$\overline U_k=U_{-k}$, since we are seeking real valued
	solutions. Therefore, we denote by $\dot H^s(\TT)$ the homogeneous Sobolev spaces of order $s$ on $\TT$, which is a subspace of $L^1(\TT)$ functions with mean value zero endowed with the norm
\begin{equation*}
  \label{eq:4}
  \norm{U}_{\dot H^s}^2:=\sum_{k\in\ZZ_0}|k|^{2s}|U_k|^2 \:.
\end{equation*}
	For $s=0$ this is a normalized version of the $L^2$-Norm
\begin{equation*}
  \norm{U}_{\dot H^0}^2=\frac{1}{2\pi}\norm{U}_{L^2}^2 \:.
\end{equation*}

	Plugging~(\ref{eq:3}) into equation~(\ref{eq:1}) yields the
	infinite coupled system
\begin{equation}
  \label{eq:6}
  \begin{cases}
    \partial_t U_k = -ik^3U_k + \frac{1}{2}ik\sum_{k_1+k_2=k}\left(U_{k_1}V_{k_2}-U_{k_1}U_{k_2}\right) \\
    \partial_t V_k = -ik^3V_k + \frac{1}{2}ik\sum_{k_1+k_2=k}\left(U_{k_1}V_{k_2}-V_{k_1}V_{k_2}\right) \\
    U_k(0)=U_k^{\text{in}} \:,\; V_k(0)=V_k^{\text{in}} \:,
  \end{cases}
\end{equation}
	for $k\in\ZZ_0$. We now apply the transform
\begin{equation}
  \label{eq:7}
  U_k(t)=e^{-ik^3t}u_k(t) \:,\;   V_k(t)=e^{-ik^3t}v_k(t) \:, \; k\in\ZZ_0 \:,
\end{equation}
	in order to eliminate the linear terms in~(\ref{eq:6}). By means of the identity
\begin{equation}
  \label{eq:8}
  (k_1+k_2)^3 = 3(k_1+k_2)k_1k_2 + k_1^3 + k_2^3
\end{equation}
	equation~(\ref{eq:6}) becomes
\begin{equation}
  \label{eq:9}
  \begin{cases}
    \partial_t u_k = \frac{1}{2}ik\sum_{k_1+k_2=k}e^{3ikk_1k_2t}\left(u_{k_1}v_{k_2}-u_{k_1}u_{k_2}\right) \\
    \partial_t v_k = \frac{1}{2}ik\sum_{k_1+k_2=k}e^{3ikk_1k_2t}\left(u_{k_1}v_{k_2}-v_{k_1}v_{k_2}\right) \\
    u_k(0)=u_k^{\text{in}} \:,\; v_k(0)=v_k^{\text{in}}
  \end{cases}, \;\;k\in \ZZ_0\;.
\end{equation}
We emphasize that the fast oscillating term $e^{3ikk_1k_2 t}$ in (\ref{eq:9}) reduces the ``strength" of the nonlinear term and make it milder, which is the underlying mechanism for prolonging the lifespan of the solutions \cite{B-I-T}.

	Observe that identity~(\ref{eq:8}) was also used in the
	original work of Bourgain~\cite{Bou-2}.
	Also notice transform~(\ref{eq:7}) is isometric in $\dot
	H^s$. Using the same notation as in~\cite{B-I-T}, we can write~(\ref{eq:9}) as
\begin{equation}
  \label{eq:10}
  \partial_t \vect{u_k}{v_k}=\vect{B_1(u,v)_k-B_1(u,u)_k}{B_1(u,v)_k-B_1(v,v)_k} \:,\; k\in\ZZ_0 \:,
\end{equation}
with $(u(0),v(0))=(u^{\text{in}},v^{\text{in}})$, where the bilinear operator $B_1(\phi,\psi)$ is defined by
\begin{equation}
  \label{eq:10a}
  B_1(\phi,\psi)_k:=\frac{1}{2}ik\sum_{k_1+k_2=k}e^{3ikk_1k_2t}\phi_{k_1}\psi_{k_2} \:.
\end{equation}

We now define our notion of a (weak) solution of~(\ref{eq:9}).
\begin{definition} \label{dfn-solution}
 Given the initial data $(u^{\text{in}},v^{\text{in}})\in (\dot H^0)^2$. We call a function $(u,v)$ a \emph{solution} of~(\ref{eq:9}) over the time interval $[0,T]$ if $(u,v)\in L^{\infty}([0,T];(\dot H^0)^2)$ and if the integrated version of~(\ref{eq:9}), that is
\begin{equation}
    \label{eq:11}
    \vect{u_k(t)}{v_k(t)}-\vect{u_k^{\text{in}}}{v_k^{\text{in}}}=\frac{1}{2}ik
    \int_0^t\sum_{k_1+k_2=k}e^{3ikk_1k_2\t}\vect{u_{k_1}v_{k_2}-u_{k_1}u_{k_2}}{u_{k_1}v_{k_2}-v_{k_1}v_{k_2}}\d\tau \:,
\end{equation}
	is satisfied for every $k\in\ZZ_0$. By means of~(\ref{eq:7})
	we ultimately get a (weak) solution of the Majda-Biello system (\ref{eq:1}).
\end{definition}

\begin{remark} \label{rmk-abscont}
It is readily seen from the Cauchy-Schwarz inequality that
\begin{equation}
    \label{eq:11a}
    \sup_{t\in [0,T]}\left|\sum_{k_1+k_2=k}e^{3ik k_1 k_2 t}(u_{k_1}(t)v_{k_2}(t)-u_{k_1}(t)u_{k_2}(t))\right|
    \leq  \norm{(u,v)}_{L^\infty([0,T];(\dot H^0)^2)}^2 \:.
\end{equation}
	Therefore, by (\ref{eq:11}), $u_k(t)$ is absolutely continuous (for every $k$)
	over the interval $[0,T]$, which implies $u_k(t)$ is differentiable a.e. on $[0,T]$ (same for $v_k(t)$). Consequently, equation~(\ref{eq:9}),
	the differential form of~(\ref{eq:11}), is satisfied for almost
	every $t\in [0,T]$. Also the continuity of $u_k(t)$ and $v_k(t)$ implies that $(u,v)$ is a weak continuous function mapping from $[0,T]$ to $(\dot H^0)^2$.
\end{remark}

The main result of this manuscript is the global well-posedness for the cKdV (\ref{eq:9}) in the space  $(\dot H^{s})^2$, for $s\geq 0$. More precisely, we have the following:

\begin{theorem}\label{exist}
{\bf(Global well-posedness)}
  	Let $s\geq 0$, $(u^{\emph{in}},v^{\emph{in}})\in (\dot H^{s})^2$, and $T>0$. Then there exists a unique solution $(u(t),v(t))\in C([0,T];(\dot H^s)^2)$ of the cKdV (\ref{eq:9}), in the sense of Definition \ref{dfn-solution},  with $(u(0),v(0))=(u^{\emph{in}},v^{\emph{in}})$, such that
\begin{equation}
\label{gal31}
\norm{(u,v)}_{L^\infty([0,T];(\dot
H^{s})^2)} \leq   C\left(\norm{(u^{\emph{in}},v^{\emph{in}})}_{(\dot H^0)^2},T,s\right)\:.
\end{equation}
	Moreover, the quantity
\begin{align} \label{conserve}
\mathcal E(u(t),v(t)):= 2\norm{u(t)}_{\dot H^0}^2+2\norm{v(t)}_{\dot H^0}^2+\norm{u(t)-v(t)}_{\dot H^0}^2
\end{align}
is conserved in time. In addition, the solution depends continuously on the initial data in the sense that
\begin{align*}
\norm{(u,v)-(\tilde u,\tilde v)}_{L^{\infty}([0,T];(\dot H^s)^2)} \leq L\norm{(u^{\emph{in}},v^{\emph{in}})-(\tilde u^{\emph{in}},\tilde v^{\emph{in}})}_{(\dot H^s)^2},
\end{align*}
where $(u,v)$, $(\tilde u,\tilde v)$ are solutions of (\ref{eq:9}) corresponding to the initial data $(u^{\emph{in}},v^{\emph{in}})$, $(\tilde u^{\emph{in}},\tilde v^{\emph{in}})$ respectively,
and $L>0$ depends on $T$, $s$, and $\max\left\{\norm{(u^{\emph{in}},v^{\emph{in}})}_{(\dot H^0)^2}, \norm{(\tilde u^{\emph{in}},\tilde v^{\emph{in}})}_{(\dot H^0)^2}\right\}$.
\end{theorem}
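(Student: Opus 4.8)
The plan is to prove the four assertions --- existence, the uniform bound \eqref{gal31}, conservation of $\mathcal E$, and Lipschitz dependence on the data --- in stages, following the successive time-averaging scheme of \cite{B-I-T}.

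\emph{Existence and the a priori bound.} I would first construct solutions by Fourier--Galerkin truncation: fixing $N$ and keeping only the modes $|k|\le N$ turns \eqref{eq:9} into a finite system of ODEs with polynomial right-hand side, so the Cauchy--Lipschitz theorem gives a unique local solution $(u^N,v^N)$. To globalize it I would establish that $\mathcal E(u^N,v^N)$ is independent of $t$. Conceptually $\mathcal E$ is a constant multiple of the physical energy $\int A^2+B^2\,dx$ written in the variables $(u,v)$, so its conservation is expected; concretely, differentiating \eqref{conserve}, inserting \eqref{eq:9}, and using the symmetry $k_1\leftrightarrow k_2$ in the convolution sums together with the reality relation $\overline{u_k}=u_{-k}$, the cubic contributions cancel, and the truncation respects this cancellation. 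Since $\mathcal E$ controls $\norm{u^N}_{\dot H^0}^2+\norm{v^N}_{\dot H^0}^2$, the $(u^N,v^N)$ exist on all of $[0,T]$ with an $N$-independent $\dot H^0$ bound.

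\emph{Higher regularity and passage to the limit.} The oscillatory factor $e^{3ikk_1k_2t}$ is the engine of the $\dot H^s$ estimate: since all indices lie in $\ZZ_0$, the phase $kk_1k_2$ never vanishes on the summation range, so each such factor may be integrated by parts in time and traded for $1/(kk_1k_2)$, producing smoothing. Iterating this (the successive differentiations by parts of Sections~\ref{first-dbp}--\ref{sec-dbp}), and at each stage separating off the resonant terms where the accumulated phase degenerates, recasts \eqref{eq:9} into a higher-order but better-behaved form. The mapping estimates for the resulting nonlinear operators (collected in the appendix from \cite{B-I-T}) then let me close a Gronwall-type inequality for $\norm{(u^N(t),v^N(t))}_{(\dot H^s)^2}$, with the conserved $\dot H^0$ norm entering the coefficients, yielding \eqref{gal31} uniformly in $N$. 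Weak-$*$ compactness in $L^\infty([0,T];(\dot H^s)^2)$, time-equicontinuity from \eqref{eq:11} and Remark~\ref{rmk-abscont}, and a compactness argument of Aubin--Lions type, produce a limit $(u,v)$ satisfying \eqref{eq:11}; the bound \eqref{gal31} and the conservation of $\mathcal E$ pass to the limit, and weak continuity together with conservation of $\mathcal E$ upgrade $(u,v)$ to $C([0,T];(\dot H^s)^2)$.

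\emph{Uniqueness and continuous dependence.} For two solutions I would write the equation satisfied by the difference $(w,z)=(u-\tilde u,\,v-\tilde v)$, which by bilinearity of $B_1$ is linear in $(w,z)$ with coefficients built from the two solutions, and estimate it in $(\dot H^s)^2$. For regular data, $s>1/2$ (Section~\ref{regular}), the Sobolev algebra property makes the bilinear terms directly controllable, so a short-time contraction via Banach's fixed point theorem gives uniqueness, and iterating across $[0,T]$ produces the stated Lipschitz bound with $L$ depending on $T$, $s$, and the $\dot H^0$ sizes of the data. The delicate range is $s\in[0,1/2]$ (Section~\ref{irregular}), and this is where I expect the main obstacle: the algebra property fails, and the contraction argument of \cite{B-I-T} there required inverting a data-dependent linear operator, which for the coupled system amounts to a \emph{system} of 1D boundary-value problems that I do not expect to solve explicitly. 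My plan to bypass this is the high/low frequency splitting: apply the differentiation-by-parts reduction only to the high-frequency part of the difference, so the time-averaging squeezing controls it directly, while the low-frequency part lives in a space where an algebra-type estimate is still available. Balancing the frequency cutoff against the time step should close the contraction without any operator inversion, after which the Lipschitz estimate on $[0,T]$ follows by the same iteration as in the regular case.
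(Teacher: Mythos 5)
Your Galerkin construction, the conservation of $\mathcal E$ via index symmetrization, the Gronwall bound in $\dot H^s$ through the twice-differentiated-by-parts form, and the Aubin--Lions passage to the limit all match the paper's Sections~\ref{S-4} and earlier. The genuine gap is in your uniqueness plan for $s>1/2$. You claim that there "the Sobolev algebra property makes the bilinear terms directly controllable, so a short-time contraction via Banach's fixed point theorem gives uniqueness." This fails: the obstruction to uniqueness for KdV-type equations is not the product estimate but the derivative in the nonlinearity. The operator $B_1$ in (\ref{eq:10a}) carries the factor $ik$, and by Lemma~\ref{L:B1} it maps $\dot H^0\times\dot H^0$ only into $\dot H^{-\theta}$, $\theta>3/2$; with the algebra property one gets at best $\norm{B_1(\phi,\psi)}_{\dot H^{s-1}}\leq C\norm{\phi}_{\dot H^s}\norm{\psi}_{\dot H^s}$, a loss of one full derivative. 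Hence the Duhamel map built from (\ref{eq:11}) does not map $C([0,T^*];(\dot H^s)^2)$ into itself, and the Gronwall estimate for the difference of two solutions cannot be closed in $\dot H^s$ unless one solution has $\partial_x u\in L^\infty$, i.e.\ roughly $s>3/2$ --- this is the classical energy-method threshold, not $s>1/2$. The alternative of running the contraction on the first form (\ref{eq:13}) leads to the left-hand side $\partial_t[w-(\text{bilinear in }w)]$ and thus to inverting a data-dependent linear operator, which is precisely the step the paper declares infeasible for the coupled system. This is why the paper's Section~\ref{regular} applies the high/low splitting and the differentiation by parts on high modes \emph{already for} $s>1/2$: the contraction constants come from the $1/N$ squeezing of $\ve B_2^Q$ (Lemma~\ref{lem-1}), from $T^*C(s,N)$ for the finitely many low modes in $\ve B_1^P$ defined in (\ref{B1-P}), and from Lemma~\ref{L:R3} (valid exactly for $s>1/2$) applied to $\ve R_3^Q$. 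Your proposal reserves this mechanism for $s\leq 1/2$ only, so your $s>1/2$ case has no working argument.

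Two smaller points. First, you have the attribution in \cite{B-I-T} reversed: there the operator inversion was used for \emph{regular} data, and the high/low splitting was introduced for $s\in[0,1/2]$; the present paper's contribution is to use the splitting for all $s\geq 0$ because the inversion does not extend to systems. Second, your sketch for $s\in[0,1/2]$ underestimates what is needed: after the first differentiation by parts on the high modes, the trilinear remainder $\ve R_3^Q$ obeys useful bounds only for $s>1/2$, so the paper must additionally separate the resonances, split the non-resonant part once more into $\ve R_{\text{3nres0}}^Q$ and $\ve R_{\text{3nres1}}^Q$, and perform a \emph{second} differentiation by parts on $\ve R_{\text{3nres0}}^Q$, leading to the modified second form (\ref{m-2nd-form}); a single round of differentiation by parts does not suffice. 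Relatedly, existence at $s=0$ is not produced by your Galerkin limit (Aubin--Lions gives strong convergence only below the regularity of the data, which at $s=0$ is insufficient to pass to the limit in the quadratic term); the paper instead obtains $s=0$ existence by a density argument, approximating the data in $\dot H^0$ by smoother data and using the $s=0$ Lipschitz continuous dependence from the contraction estimate, and your claim that weak continuity plus conservation of $\mathcal E$ upgrades the limit to $C([0,T];(\dot H^s)^2)$ only controls the $\dot H^0$ topology, not $\dot H^s$ for $s>0$.
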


\smallskip

\section{First differentiation by parts in time}\label{first-dbp}

	As already mentioned above we want to derive different forms
	of the cKdV~(\ref{eq:9}) in order to
	obtain operators which have better mapping properties
	than $B_1$, given in (\ref{eq:10a}), and whose regularity is specified in Lemma~\ref{L:B1}. This will be formally done by the
	differentiation by parts procedure as described in the
	sequel. One observes that (\ref{eq:9}) is equivalent to
\begin{align} \label{1st-dif-1}
  \partial_t\left[\vect{u_k}{v_k}-\frac{1}{6}\sum_{k_1+k_2=k}\frac{e^{3ikk_1k_2t}}{k_1k_2}
    \vect{u_{k_1}v_{k_2}-u_{k_1}u_{k_2}}{u_{k_1}v_{k_2}-v_{k_1}v_{k_2}}\right] \notag\\
  =-\frac{1}{6}\sum_{k_1+k_2=k}\frac{e^{3ikk_1k_2t}}{k_1k_2}
  \partial_t\vect{u_{k_1}v_{k_2}-u_{k_1}u_{k_2}}{u_{k_1}v_{k_2}-v_{k_1}v_{k_2}}.
\end{align}

	Notice that since we assume the spatial means are zero,
	the indices $k,k_1,k_2$ in the above expressions are never
	equal to zero. That is, there is no resonance between the
	nonlinearity of the cKdV system and the linear operator $\partial_x^3$.

We look at a typical term on the right-hand side of (\ref{1st-dif-1}). By using (\ref{eq:9}) we deduce
\begin{align*}
&\sum_{k_1+k_2=k}\frac{e^{3ikk_1k_2 t}}{k_1 k_2} \partial_t(u_{k_1} v_{k_2})
=\sum_{k_1+k_2=k}\frac{e^{3ikk_1k_2 t}}{k_1 k_2}
(u_{k_1} \partial_t v_{k_2} + v_{k_2} \partial_t u_{k_1})\notag\\
&=\sum_{k_1+k_2=k}\frac{e^{3ikk_1k_2 t}}{k_1 k_2}
(u_{k_1} \partial_t v_{k_2} + v_{k_1} \partial_t u_{k_2})\notag\\
&=\sum_{k_1+k_2=k}\frac{e^{3ikk_1k_2 t}}{k_1 k_2}\left(\frac{i}{2} k_2
\sum_{\alpha+\beta=k_2}e^{3ik_2 \alpha \beta t}[u_{k_1}(u_{\alpha}v_{\beta}-v_{\alpha}v_{\beta})
+v_{k_1}(u_{\alpha}v_{\beta}-u_{\alpha}u_{\beta})]\right) \notag\\
&=\frac{i}{2}\sum_{k_1+k_2+k_3=k}\frac{e^{3i(k_1+k_2)(k_2+k_3)(k_1+k_3)t}}{k_1}
(u_{k_1}u_{k_2}v_{k_3}-u_{k_1}v_{k_2}v_{k_3} \notag\\
&\hspace{3 in}+v_{k_1}u_{k_2}v_{k_3}-v_{k_1}u_{k_2}u_{k_3}).
\end{align*}
In the same manner, we can manipulate every term on the right-hand side of (\ref{1st-dif-1}) to arrive at the
\emph{first form of the cKdV}:
\begin{align}
  \label{eq:13}
 \partial_t\left[
    \vect{u_k}{v_k}-\vect{B_2(u,v)_k-B_2(u,u)_k}{B_2(u,v)_k-B_2(v,v)_k}
  \right] =\ve{R}_3(u,v)_k \:,
\end{align}
	where the bilinear operator $B_2(\phi,\psi)$ is defined by
\begin{equation}
  \label{eq:13a}
  B_2(\phi,\psi)_k:=\frac{1}{6}\sum_{k=k_1+k_2}\frac{e^{3ikk_1k_2t}}{k_1k_2}\phi_{k_1}\psi_{k_2} \:,
\end{equation}
	and all terms in every component of $\ve{R}_3(u,v)$ have the structure $\pm \frac{i}{12}R_3(\phi,\psi,\xi)$, where
\begin{equation}
  \label{eq:13b}
  R_3(\phi,\psi,\xi):=\sum_{k=k_1+k_2+k_3}\frac{e^{3i(k_1+k_2)(k_2+k_3)(k_1+k_3)t}}{k_1}
  \phi_{k_1}\psi_{k_2}\xi_{k_3} \:,
\end{equation}
where each of $\phi$, $\psi$, $\xi$ may be either $u$ or $v$. For the sake of conciseness, we do not provide the exact formula of $\ve{R}_3(u,v)$.

\begin{remark}
The mapping properties of $B_2$ and $R_3$ are better than those of $B_1$ (see the Appendix). So the first form (\ref{eq:13}) is ``milder" than the original cKdV (\ref{eq:9}), which is the purpose of the differentiation by parts procedure.
On the other hand, we remark that these two forms, (\ref{eq:13}) and  (\ref{eq:9}), are not equivalent. Clearly, any smooth functions that satisfy the original equation (\ref{eq:9}) are also solutions of the newly derived equation (\ref{eq:13}), but the converse may not be true. Nonetheless, if one is able to show the uniqueness of solutions to (\ref{eq:13}), then it follows that (\ref{eq:9}) cannot have more than one solution. Hence, in order to prove the uniqueness for (\ref{eq:9}), our strategy is to consider the equation after the first (or the second) differentiation by parts procedure (see Section \ref{regular} and \ref{irregular} for details).
\end{remark}

\smallskip

\section{Second differentiation by parts in time}\label{sec-dbp}

	In order to establish a priori estimates for higher order
	Sobolev norms than $\dot H^0$, namely in $\dot H^s$,
    for $s>0$, we
	cannot use the operator $R_3$ due to its restricted regularity
	properties. Therefore, we need to perform a second
	differentiation by parts in time. But before doing this we
	must care for the nonlinear resonances which reveal themselves as
	obstacles for this procedure. Our
	aim is to decompose $\ve R_3(u,v)$ into a sum of two parts:
\begin{align} \label{split}
\ve R_3(u,v)=\ve R_{\text{3res}}(u,v)+\ve R_{\text{3nres}}(u,v),
\end{align}
	where the first part $\ve {R}_{\text{3res}}(u,v)$
    involves the resonances  and the second part $\ve {R}_{\text{3nres}}(u,v)$
	is suitable for the differentiation by parts procedure
	(non-resonance part). Recall every term in $\ve R_3(u,v)$ has the structure of   $\pm \frac{i}{12}R_3(\phi,\psi,\xi)$ defined in (\ref{eq:13b}) where each of $\phi$, $\psi$, $\xi$ may be either $u$ or $v$.
Thus, the decomposition of $R_3(\phi,\psi,\xi)_k=R_{\text{3res}}(\phi,\psi,\xi)_k+R_{\text{3nres}}(\phi,\psi,\xi)_k$ describes the split (\ref{split}). Indeed, we set
\begin{align} \label{def-res}
  R_{\text{3res}}(\phi,\psi,\xi)_k
  :=\sum_{k_1+k_2+k_3=k}^{\text{res}} \frac{\phi_{k_1}\psi_{k_2}\xi_{k_3}}{k_1} \:, \;\;k\in \ZZ_0\:,
\end{align}
where the summation is carried out over the set of subscripts $k_1$, $k_2$ and $k_3$ satisfying
$(k_1+k_2)(k_2+k_3)(k_1+k_3)=0$ (the resonances). Also, we denote
\begin{align} \label{def-nres}
  R_{\text{3nres}}(\phi,\psi,\xi)_k
  :=\sum_{k_1+k_2+k_3=k}^{\text{nonres}}\frac{e^{3i(k_1+k_2)(k_2+k_3)(k_1+k_3)t}}{k_1}\phi_{k_1}\psi_{k_2}\xi_{k_3} \:,\;\;k\in \ZZ_0\:,
\end{align}
where the sum is taken over all $k_1$, $k_2$ and $k_3$
such that $(k_1+k_2)(k_2+k_3)(k_1+k_3)\not=0$ (the non-resonances).

Let us first consider the resonances. Same as \cite{B-I-T}, the set of subscripts $k_1$, $k_2$ and $k_3$ satisfying $(k_1+k_2)(k_2+k_3)(k_1+k_3)=0$ and $k_1+k_2+k_3=k\in \ZZ_0$ is the union of six disjoint sets $S_1,\ldots,S_6$:
\begin{align*}
&S_1=\{k_1+k_2=0\}\cap \{k_2+k_3=0\}\Leftrightarrow k_1=k, \;k_2=-k, \; k_3=k\:;\notag\\
&S_2=\{k_1+k_2=0\}\cap \{k_3+k_1=0\}\Leftrightarrow k_1=k, \; k_2=-k, \; k_3=-k\:;\notag\\
&S_3=\{k_2+k_3=0\}\cap \{k_3+k_1=0\}\Leftrightarrow k_1=k, \; k_2=k, \; k_3=-k\:;\notag\\
&S_4=\{k_1+k_2=0\}\cap \{k_2+k_3\not=0\} \cap \{k_3+k_1\not=0\} \Leftrightarrow \notag\\
&\hspace{0.4 in} k_1=j,\; k_2=-j, \; k_3=k,\;  |j|\not= |k| \:; \notag\\
&S_5=\{k_2+k_3=0\}\cap \{k_1+k_2\not=0\} \cap \{k_3+k_1\not=0\} \Leftrightarrow \notag\\
&\hspace{0.4 in} k_1=k,\; k_2=j, \; k_3=-j,\;  |j|\not= |k| \:; \notag\\
&S_6=\{k_3+k_1=0\}\cap \{k_1+k_2\not=0\} \cap \{k_2+k_3\not=0\} \Leftrightarrow \notag\\
&\hspace{0.4 in} k_1=j,\; k_2=k, \; k_3=-j,\;  |j|\not= |k| \:,
\end{align*}
where $j\in \ZZ_0$. As a result,
\begin{align*}
R_{\text{3res}}(\phi,\psi,\xi)_k &=  \sum_{m=1}^6 \sum_{S_m} \frac{\phi_{k_1}\psi_{k_2}\xi_{k_3}}{k_1}\notag\\
&=\frac{\phi_k\psi_{-k}\xi_k}{k}+\frac{\phi_k\psi_{-k}\xi_{-k}}{k}+\frac{\phi_k\psi_{k}\xi_{-k}}{k}
+\xi_k \sum_{j\in \ZZ_0, |j|\not=|k|}\frac{\phi_{j}\psi_{-j}}{j} \notag\\
&\hspace{0.3 in}+\frac{\phi_k}{k} \sum_{j\in \ZZ_0, |j|\not=|k|} \psi_{j}\xi_{-j}+\psi_k \sum_{j\in \ZZ_0, |j|\not=|k|}\frac{\phi_{j}\xi_{-j}}{j}.
\end{align*}
Consequently, we deduce the following mapping property of
$R_{\text{3res}}: \dot H^{s-1}\times \dot H^s \times \dot H^s \rightarrow \dot H^s$, $s\geq 0$,
\begin{align*}
\norm{R_{\text{3res}}(\phi,\psi,\xi)}_{\dot H^s}
\leq &C \big(\norm{\phi}_{\dot H^{s-1}} \norm{\psi}_{\dot H^0} \norm{\xi}_{\dot H^0}+\norm{\xi}_{\dot H^s} \norm{\phi}_{\dot H^{-1}} \norm{\psi}_{\dot H^0}  \notag\\
&+ \norm{\phi}_{\dot H^{s-1}} \norm{\psi}_{\dot H^0} \norm{\xi}_{\dot H^0}+\norm{\psi}_{\dot H^s} \norm{\phi}_{\dot H^{-1}} \norm{\xi}_{\dot H^0} \big).
\end{align*}
 Notice that $1/k_1$ in the definition (\ref{def-res}) of $R_{\text{3res}}(\phi, \psi, \xi)$
 has smoothing effect on the variable $\phi$.
 Since every term in $\ve R_{\text{3res}}(u,v)$ has the structure of $\pm \frac{i}{12}R_{\text{3res}}(\phi,\psi,\xi)$ where $\phi$, $\psi$, $\xi$ may be either $u$ or $v$, it follows that
\begin{align} \label{resbound}
\norm{\ve R_{\text{3res}}(u,v)}_{(\dot H^s)^2}\leq C \norm{(u,v)}_{(\dot H^0)^2}^2 \norm{(u,v)}_{(\dot H^s)^2}, \text{\;\;for\;\;} s\geq 0.
\end{align}

Next we perform differentiation by parts to the non-resonance part $\ve R_{\text{3nres}}(u,v)$. Since every term in $\ve R_{\text{3nres}}(u,v)$ is in the form of $\pm \frac{i}{12}R_{\text{3nres}}(\phi,\psi,\xi)$ where $\phi$, $\psi$, $\xi$ may be either $u$ or $v$, it is sufficient to work on a typical term $R_{\text{3nres}}(u,u,v)$ in order to demonstrate our strategy. Observe that
\begin{align}
  \label{dbp9}
  &R_{\text{3nres}}(u,u,v)_k  \notag\\
  &=\sum_{k_1+k_2+k_3=k}^{\text{nonres}}\frac{e^{3i(k_1+k_2)(k_2+k_3)(k_1+k_3)t}}{k_1}
  u_{k_1}u_{k_2}v_{k_3}  \notag\\
  &= \frac{1}{3i}\partial_t B_3(u,u,v)_k-\frac{1}{3i}\sum_{k_1+k_2+k_3=k}^{\text{nonres}}\frac{e^{3i(k_1+k_2)(k_2+k_3)(k_1+k_3)t}}{k_1(k_1+k_2)(k_2+k_3)(k_1+k_3)}\notag\\
 &\hspace{1.5 in}\times (\partial_t u_{k_1}u_{k_2}v_{k_3}+u_{k_1}\partial_t u_{k_2}v_{k_3}+u_{k_1}u_{k_2}\partial_t v_{k_3}) \:,
\end{align}
	where the trilinear operator $B_3(\phi,\psi,\xi)$ is defined by
\begin{equation}
  \label{dbp10}
  B_3(\phi,\psi,\xi)_k:=
\sum_{k_1+k_2+k_3=k}^{\text{nonres}}
\frac{e^{3i(k_1+k_2)(k_2+k_3)(k_1+k_3)t}}{k_1(k_1+k_2)(k_2+k_3)(k_1+k_3)}
\phi_{k_1}\psi_{k_2}\xi_{k_3} \:.
\end{equation}
	Using equation~(\ref{eq:9}) we observe
\begin{align} \label{star}
  &\sum_{k_1+k_2+k_3=k}^{\text{nonres}}\frac{e^{3i(k_1+k_2)(k_2+k_3)(k_1+k_3)t}}{k_1(k_1+k_2)(k_2+k_3)(k_1+k_3)}
  (\partial_t u_{k_1}u_{k_2}v_{k_3}+u_{k_1}\partial_t u_{k_2}v_{k_3}+u_{k_1}u_{k_2}\partial_t v_{k_3})  \notag\\
  &=\frac{i}{2}\Bigg[
    \sum_{k_1+k_2+k_3+k_4=k}^{\text{nonres}}\frac{e^{i\Phi(\ve k)t}}{(k_1+k_2)(k_1+k_3+k_4)(k_2+k_3+k_4)}
    (v_{k_1}u_{k_2}u_{k_3}v_{k_4}-v_{k_1}u_{k_2}u_{k_3}u_{k_4})  \notag\\
  &+\sum_{k_1+k_2+k_3+k_4=k}^{\text{nonres}}\frac{e^{i\Phi(\ve k)t}(k_3+k_4)}{k_1(k_1+k_2)(k_1+k_3+k_4)(k_2+k_3+k_4)}
  (u_{k_1}v_{k_2}u_{k_3}v_{k_4}-u_{k_1}v_{k_2}u_{k_3}u_{k_4}) \notag\\
  &+\sum_{k_1+k_2+k_3+k_4=k}^{\text{nonres}}\frac{e^{i\Phi(\ve k)t}(k_3+k_4)}{k_1(k_1+k_2)(k_1+k_3+k_4)(k_2+k_3+k_4)}
    (u_{k_1}u_{k_2}u_{k_3}v_{k_4}-u_{k_1}u_{k_2}v_{k_3}v_{k_4})
  \Bigg],
\end{align}
	where $\ve k:=(k_1,k_2,k_3,k_4)$ and $\Phi(\ve
	k):=(k_1+k_2+k_3+k_4)^3-k_1^3-k_2^3-k_3^3-k_4^3$. However, the
    exact expression of the
	phase function $\Phi(\ve k)$ is not important in our
	case. Proceeding in the same manner for each non-resonance term, we obtain a sum of expressions in the structure
\begin{equation*}
  \label{dbp13}
  B_4^1(\phi,\psi,\xi,\eta)_k=
  \sum_{k_1+k_2+k_3+k_4=k}^{\text{nonres}}\frac{e^{i\Phi(\ve k)t}}{(k_1+k_2)(k_1+k_3+k_4)(k_2+k_3+k_4)}\phi_{k_1}\psi_{k_2}\xi_{k_3}\eta_{k_4}
\end{equation*}
or
\begin{equation*}
  \label{dbp14}
  B_4^2(\phi,\psi,\xi,\eta)_k=
  \sum_{k_1+k_2+k_3+k_4=k}^{\text{nonres}}\frac{e^{i\Phi(\ve k)t}(k_3+k_4)}{k_1(k_1+k_2)(k_1+k_3+k_4)(k_2+k_3+k_4)}\phi_{k_1}\psi_{k_2}\xi_{k_3}\eta_{k_4}.
\end{equation*}
We are now able to write the cKdV~(\ref{eq:9}) in its \emph{second form}, namely
\begin{align}
  \label{dbp15}
  &\partial_t\left[
    \vect{u_k}{v_k}-\vect{B_2(u,v)_k-B_2(u,u)_k}{B_2(u,v)_k-B_2(v,v)_k} + \ve B_3(u,v)_k\right] \notag\\
  &=\ve R_{\text{3res}}(u,v)_k+\ve B_4(u,v)_k \:, \quad k\in\ZZ_0 \:,
\end{align}
where every term in $\ve B_3(u,v)_k$ has the format $\pm \frac{1}{36}B_3(\phi,\psi,\xi)_k$ with each of the three arguments being either $u$ or $v$. Hence, by the smoothing property of $B_3(\phi,\psi,\xi)$ provided in Lemma \ref{L:B3}, one has
\begin{align} \label{veB3}
\norm{\ve B_3(u,v)}_{(\dot H^{s+2})^2}\leq C(s) \norm{(u,v)}_{(\dot H^s)^2}^3, \text{\;\;for\;\;} s\geq 0.
\end{align}
On the other hand, each term in $\ve B_4(u,v)_k$ is either $\pm \frac{i}{72}B_4^1(\phi,\psi,\xi,\eta)$ or $\pm \frac{i}{72} B_4^2(\phi,\psi,\xi,\eta)$ with each of the four arguments being $u$ or $v$.
Due to Lemma \ref{L:B4}, the multi-linear operator $B_4(\phi,\psi,\xi,\eta)$ defined by
\begin{equation*}
  B_4(\phi,\psi,\xi,\eta) := B_4^1(\phi,\psi,\xi,\eta) + B_4^2(\phi,\psi,\xi,\eta)
\end{equation*}
has nice smoothing property, which yields
\begin{align} \label{veB4}
\norm{\ve B_4(u,v)}_{(\dot H^{s+\e})^2}\leq C(s,\e)\norm{(u,v)}_{(\dot H^s)^2}^4,
\end{align}
for $s\geq 0$ and $\e \in (0,\frac{1}{2})$.

\smallskip

\section{Global existence for $s>0$}\label{S-4}

	In this section we address the global existence of
	solutions of the cKdV (\ref{eq:9}). For this purpose we utilize a Galerkin
	version of equation~(\ref{eq:9}) which reads
\begin{align}
  \label{gal1}
  \begin{cases}
  \partial_t\vect{u_k^N}{v_k^N}=
  \frac{1}{2}i\mathcal P k \sum_{k_1+k_2=k}e^{3ikk_1k_2t}
  \vect{(\mathcal Pu_{k_1}^N)(\mathcal Pv_{k_2}^N)-(\mathcal Pu_{k_1}^N)(\mathcal Pu_{k_2}^N)}
  {(\mathcal Pu_{k_1}^N)(\mathcal Pv_{k_2}^N)-(\mathcal Pv_{k_1}^N)(\mathcal Pv_{k_2}^N)} \\
  u^N_k(0)=u^{\text{in}}_k \;,\;\;v^N_k(0)=v^{\text{in}}_k,
  \end{cases}
\end{align}
for $k\in \ZZ_0$.
Here $\mathcal P$ denotes the projection on the low
Fourier modes $|k|\leq  N$, that is,
\begin{align} \label{projection}
\mathcal Pu=\sum_{|k|\leq N}u_k e^{ikx} \text{\;\;and\;\;}
\mathcal Pu_k=(\mathcal P u)_k=\Big\{
\begin{array}{lcl} u_k&\text{if}& |k|\leq N  \\ 0&\text{if} & |k|>N \end{array}.
\end{align}
We stress that the operator $\mathcal P$ depends on $N$. For the sake of conciseness, we choose the notation $\mathcal P$ instead of $\mathcal P_N$.

It is easy to see from~(\ref{gal1}) that $\partial_t(u_k^N,v_k^N)=0$ for $|k|>N$.
Therefore~(\ref{gal1}) is effectively a finite system of ODEs.

The following proposition follows by standard arguments of ODE theory
since the nonlinearity in cKdV is locally Lipschitz.
\begin{proposition}\label{gal2}
Let the initial data $(u^{\emph{in}},v^{\emph{in}})\in (\dot
H^{0})^2$. For every positive integer $N$, there exists $T>0$
such that problem~(\ref{gal1}) has a unique solution $(u^N(t),v^N(t))\in (\dot H^{0})^2$
on the time interval $[0,T]$. The solution can be extended to a
maximal interval of existence $[0,T_{\max})$ such that either
$T_{\max}=+\infty$, or if $T_{\max}$ is finite then one has
$\limsup_{t\rightarrow T_{\max}^{-}}\norm{(u^N(t),v^N(t))}_{(\dot
H^0)^2}=+\infty$.
\end{proposition}

The next result shows that the solution $(u^N,v^N)$ of the Galerkin
system can not blow up in finite time in $(\dot H^0)^2$, and
therefore $T_{\max}=+\infty$.

\begin{proposition}\label{gal3}
	Let $(u^{\emph{in}},v^{\emph{in}})\in (\dot H^0)^2$. Then the solution
	$(u^N,v^N)$ of the Galerkin system (\ref{gal1}) exists globally in
	time. Furthermore, the quantity
\begin{align*}
\mathcal E(u^N(t),v^N(t)):= 2\norm{u^N(t)}_{\dot H^0}^2+2\norm{v^N(t)}_{\dot H^0}^2+\norm{u^N(t)-v^N(t)}_{\dot H^0}^2
\end{align*}
is conserved in time.
\end{proposition}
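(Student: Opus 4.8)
The plan is to obtain global existence from the conservation of $\mathcal E$: since $\mathcal E(u^N,v^N)\geq 2\norm{(u^N,v^N)}_{(\dot H^0)^2}^2$, once conservation is established we get $\norm{(u^N(t),v^N(t))}_{(\dot H^0)^2}^2\leq \frac{1}{2}\mathcal E(u^{\text{in}},v^{\text{in}})$ for all $t$, so the blow-up alternative of Proposition~\ref{gal2} cannot occur and $T_{\max}=+\infty$. Thus the whole statement reduces to showing $\frac{d}{dt}\mathcal E(u^N(t),v^N(t))=0$.

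Before computing, I would note from (\ref{gal1}) that the modes $|k|>N$ are frozen, $\partial_t(u_k^N,v_k^N)=0$, so they add only constants to the three squared norms making up $\mathcal E$; it therefore suffices to control the low-mode part. Undoing the isometric transform (\ref{eq:7}) via $U_k^N=e^{-ik^3t}u_k^N$, $V_k^N=e^{-ik^3t}v_k^N$ ($|k|\leq N$) and setting $W=\sum_{|k|\leq N}U_k^Ne^{ikx}$, $Z=\sum_{|k|\leq N}V_k^Ne^{ikx}$, which are real-valued trigonometric polynomials, the dynamical part of $\mathcal E$ becomes $\frac{1}{2\pi}\int (2W^2+2Z^2+(W-Z)^2)\,dx$ (the transform preserves each $|U_k|$ and each $|U_k-V_k|$). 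Since all spatial integrals below are over $\TT=[0,2\pi]$, the system (\ref{gal1}) takes the physical form $W_t=W_{xxx}-\mathcal P(WW_x)+\frac{1}{2}\mathcal P((WZ)_x)$ and $Z_t=Z_{xxx}-\mathcal P(ZZ_x)+\frac{1}{2}\mathcal P((WZ)_x)$, where $\mathcal P$ is the self-adjoint $L^2$-orthogonal projection onto $\{e^{ikx}:|k|\leq N\}$ and $\mathcal P W=W$, $\mathcal P Z=Z$.

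The crux is the energy identity. Pairing the $W$-equation with $W$ and using that $\mathcal P$ is self-adjoint with $\mathcal P W=W$, every projection moves off the nonlinearity, turning the self-interaction into an exact derivative: $\int W\,\mathcal P(WW_x)=\int W^2W_x=\frac{1}{3}\int(W^3)_x=0$, while $\int WW_{xxx}=0$ by parts. Only the cross term survives, giving $\frac{d}{dt}\int W^2=-\int WW_xZ=:-I_1$, and symmetrically $\frac{d}{dt}\int Z^2=-\int WZZ_x=:-I_2$. For the difference, $(W-Z)_t=(W-Z)_{xxx}-\mathcal P(WW_x-ZZ_x)$ (the $(WZ)_x$ terms cancel), and pairing with $W-Z$ together with $\mathcal P(W-Z)=W-Z$ and $\int W^2W_x=\int Z^2Z_x=0$ yields $\frac{d}{dt}\int(W-Z)^2=2(I_1+I_2)$. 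Hence $\frac{d}{dt}(2\int W^2+2\int Z^2+\int(W-Z)^2)=-2I_1-2I_2+2(I_1+I_2)=0$, so $\mathcal E$ is conserved. This matches the structural fact that, for the continuous system in the variables of (\ref{int-1a}), $2U^2+2V^2+(U-V)^2=4(A^2+B^2)$, identifying $\mathcal E$ with a constant multiple of the total energy $E$.

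I expect the only real obstacle to be checking that the Galerkin truncation does not destroy these cancellations: the identities $\int W\,\mathcal P(WW_x)=\int W^2W_x=0$ depend precisely on $\mathcal P$ being self-adjoint together with $\mathcal P W=W$, which is what lets the cubic terms collapse to integrals of exact derivatives of the genuinely periodic band-limited functions $W,Z$. A second point to watch is the bookkeeping of the weights $2,2,1$ in $\mathcal E$, which are exactly those forcing the two cross terms $I_1,I_2$ to cancel; any other choice would leave a nonzero cubic remainder. The remaining ingredients—reality of $W,Z$ (preserved by the truncation) and the passage between the Fourier and physical formulations—are routine, and conservation of $\mathcal E$ then yields the uniform bound and hence $T_{\max}=+\infty$ as in the first step.
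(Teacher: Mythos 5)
Your proposal is correct, but it reaches the conservation of $\mathcal E$ by a genuinely different route than the paper. The paper never leaves the transformed Fourier (interaction) representation: it rewrites $\mathcal E$ as $3\norm{u^N}_{\dot H^0}^2+3\norm{v^N}_{\dot H^0}^2-2\SP{u^N}{v^N}$, differentiates, inserts the Galerkin system (\ref{gal1}), and collects everything into a single cubic convolution sum $A$ over $k_1+k_2+k_3=0$, $0<|k_i|\leq N$; it then kills $A$ purely by index symmetrization (substituting $k_3=-(k_1+k_2)$ and exchanging indices gives $A=-2A$, hence $A=0$). You instead undo the isometric transform (\ref{eq:7}), observe that the frozen high modes contribute only constants to $\mathcal E$, and work with the band-limited real functions $W=\mathcal P U^N$, $Z=\mathcal P V^N$ in physical space, where the cancellation comes from classical integration by parts ($\int WW_{xxx}=0$, $\int W^2W_x=0$) together with the self-adjointness of $\mathcal P$ and $\mathcal PW=W$. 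The two mechanisms are dual to each other — the paper's index exchange is exactly the Fourier-side incarnation of your integration by parts — but your version is arguably more transparent: it explains the weights $2,2,1$ structurally by identifying $\mathcal E$ with the original Majda--Biello energy $4\int(A^2+B^2)\,dx$, and it isolates precisely which features of the truncation matter (that $\mathcal P$ is an orthogonal projection fixing $W,Z$ and preserving reality). What the paper's computation buys in exchange is that it stays entirely within the formalism used in the rest of the manuscript, so it needs no verification that the truncated system takes the stated physical-space form nor any explicit passage back through the transform; note that both arguments do rely on the reality condition $\bar u_k^N=u_{-k}^N$ being propagated by the Galerkin flow (the paper invokes it when pairing modes, you invoke it to write the norms as $\int W^2$, etc.), and both conclude global existence from the blow-up alternative of Proposition \ref{gal2} in the same way.
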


\begin{proof}
By Proposition \ref{gal2}, it is sufficient to show the conservation of $\mathcal E(u^N(t),v^N(t))$ on the interval $[0,T_{\max})$. Thus, in the proof, we consider $t\in [0,T_{\max})$.
 Let us rewrite the quantity $\mathcal E(u^N(t),v^N(t))$ as
\begin{equation*}
\mathcal E(u^N(t),v^N(t)) = 3\norm{u^N(t)}_{\dot H^0}^2+3\norm{v^N(t)}_{\dot H^0}^2-2\SP{u^N(t)}{v^N(t)} \:,
\end{equation*}
    where $\SP{\cdot}{\cdot}$ denotes the scalar
	product in $\dot H^0$.
	Differentiating $\mathcal E(u^N(t),v^N(t))$ yields
\begin{align*}
    \partial_t \mathcal E(u^N(t),v^N(t)) & = 6\SP{\partial_t u^N}{u^N}
	+ 6\SP{\partial_t v^N}{v^N} -2\SP{\partial_t u^N}{v^N}
	-2\SP{\partial_t v^N}{u^N} \notag\\
    & = \sum_{0<|k|\leq  N}
    6\partial_t u_k^N {u}^N_{-k} + 2\partial_t v_k^N {v}_{-k}^N - 2\partial_t u_k^N {v}^N_{-k} - 2\partial_t v_k^N {u}^N_{-k} \:,
\end{align*}
where we have used $\bar u^N_k=u^N_{-k}$ and the fact $\partial_t(u_k^N,v_k^N)=0$ for $|k|>N$.

	Plugging in the expressions of the
	derivatives of $u_k^N$ and $v_k^N$ by using the Galerkin
	system~(\ref{gal1}), we obtain after collecting similar terms
\begin{align} \label{gal8}
    \partial_t \mathcal E(u^N(t),v^N(t)) & = \sum_{k_1+k_2+k_3=0 \atop 0<|k_1|,|k_2|,|k_3|\leq N}
    i k_3 e^{-3ik_1k_2k_3t}\big\{
    3 u_{k_1}^N u_{k_2}^N u_{k_3}^N+3 v_{k_1}^N v_{k_2}^N v_{k_3}^N \notag\\
    & \qquad  -2 u_{k_1}^N v_{k_2}^N v_{k_3}^N-2 u_{k_1}^N v_{k_2}^N u_{k_3}^N  - u_{k_1}^N u_{k_2}^N v_{k_3}^N- v_{k_1}^N v_{k_2}^N u_{k_3}^N \big\} \notag\\
    & =: A \:.
\end{align}
	Now, writing $k_3=-(k_1+k_2)$ one has
\begin{equation*}
  \begin{split}
    \partial_t \mathcal E(u^N(t),v^N(t))& = -\sum_{k_1+k_2+k_3=0 \atop 0<|k_1|,|k_2|,|k_3|\leq N}
    i k_1 e^{-3ik_1k_2k_3t}\big\{
    3 u_{k_1}^N u_{k_2}^N u_{k_3}^N +3 v_{k_1}^N v_{k_2}^N v_{k_3}^N \\
    & \qquad \qquad-2 u_{k_1}^N v_{k_2}^N v_{k_3}^N -2 u_{k_1}^N v_{k_2}^N u_{k_3}^N  - u_{k_1}^N u_{k_2}^N v_{k_3}^N- v_{k_1}^N v_{k_2}^N u_{k_3}^N \big\} \\
    & \hspace{0.2 in} -\sum_{k_1+k_2+k_3=0 \atop 0<|k_1|,|k_2|,|k_3|\leq N}
    i k_2 e^{-3ik_1k_2k_3t}\big\{
    3 u_{k_1}^N u_{k_2}^N u_{k_3}^N +3 v_{k_1}^N v_{k_2}^N v_{k_3}^N \\
    & \qquad \qquad -2 u_{k_1}^N v_{k_2}^N v_{k_3}^N-2 u_{k_1}^N v_{k_2}^N u_{k_3}^N  - u_{k_1}^N u_{k_2}^N v_{k_3}^N-v_{k_1}^N v_{k_2}^N u_{k_3}^N \big\} .
  \end{split}
\end{equation*}
	An exchange of $k_1$ and $k_3$ in the first sum and of $k_2$
	and $k_3$ in the second sum yields, after collecting similar terms
\begin{align}
  \label{gal8b}
    \partial_t \mathcal E(u^N(t),v^N(t)) & = -\sum_{k_1+k_2+k_3=0 \atop 0<|k_1|,|k_2|,|k_3|\leq N}
    i k_3  e^{-3ik_1k_2k_3t} \big\{
    6 u_{k_1}^N u_{k_2}^N u_{k_3}^N+6 v_{k_1}^N v_{k_2}^N v_{k_3}^N  \notag\\
    & \hspace{0.3 in} -4 u_{k_1}^N v_{k_2}^N v_{k_3}^N- 4 u_{k_1}^N v_{k_2}^N u_{k_3}^N -2 u_{k_1}^N u_{k_2}^N v_{k_3}^N -2 v_{k_1}^N v_{k_2}^N u_{k_3}^N \big\} \notag\\
    & = -2A \:.
\end{align}

Comparing (\ref{gal8}) and (\ref{gal8b}) yields $A=-2A$, and thus $A=0$, i.e., the quantity $\mathcal E(u^N(t),v^N(t))$ is conserved.

\end{proof}

	Now we address estimates for higher order Sobolev norms of the global solution of the Galerkin system (\ref{gal1}). In
	order to do that we utilize the second form of the
	equation. Taking the solution $(u^N,v^N)$  of our Galerkin
	system~(\ref{gal1}) we see that it satisfies the Galerkin
	version of the second form (\ref{dbp15}) of the cKdV
	introduced in Section~\ref{sec-dbp}, namely
\begin{align}
  \label{gal10}
  &\partial_t\left[
    \vect{u^N_k}{v^N_k}-\vect{B_2^N(u^N,v^N)_k-B_2^N(u^N,u^N)_k}{B_2^N(u^N,v^N)_k-B_2^N(v^N,v^N)_k} + \ve B_3^N(u^N,v^N)_k\right] \notag\\
  &=\ve R_{\text{3res}}^N(u^N,v^N)_k+\ve B_4^N(u^N,v^N)_k \:, \quad k\in\ZZ_0 \:,
\end{align}
where we use the notations
\begin{align*}
  \label{gal11}
  &B_2^N(\phi,\psi):=\mathcal P B_2(\mathcal P \phi,\mathcal P \psi) \:, \quad   \ve B_3^N(\phi,\psi):=\mathcal P \ve B_3 (\mathcal P \phi,\mathcal P \psi) \:,\notag\\
   &\ve R_{\text{3res}}^N(\phi,\psi):=\mathcal P \ve R_{\text{3res}} (\mathcal P \phi,\mathcal P \psi) \:, \quad
   \ve B_4^N(\phi,\psi):=\mathcal P \ve B_4 (\mathcal P \phi,\mathcal P \psi) \:.
\end{align*}

The next result states, for any $T>0$, the high order Sobolev norms of the Galerkin system solutions $(u^N,v^N)$ are bounded on $[0,T]$ uniformly in $N$.
\begin{proposition} \label{high-sobolev}
	Assume $s\geq  0$, $(u^{\emph{in}},v^{\emph{in}})\in (\dot
	H^{s})^2$, and $T>0$. Let $(u^N(t),v^N(t))$ be the solution of the Galerkin
	system~(\ref{gal1}) over the interval $[0,T]$ with the initial data $(u^{\emph{in}},v^{\emph{in}})$. Then
	$(u^N(t),v^N(t))$ solves~(\ref{gal10}) and satisfies the
	estimate
\begin{equation}
    \label{gal14}
    \norm{(u^N(t),v^N(t))}_{(\dot H^{s})^2}\leq  C\left(\norm{(u^{\emph{in}},v^{\emph{in}})}_{(\dot H^0)^2},T,s\right) \:, \quad \text{for all} \quad t\in [0,T]\:,
\end{equation}
where the bound is independent of $N$.
\end{proposition}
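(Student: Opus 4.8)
The case $s=0$ is immediate. By Proposition~\ref{gal3} the quantity $\mathcal E(u^N(t),v^N(t))$ is conserved, and since $|\SP{u}{v}|\le\tfrac12(\norm{u}_{\dot H^0}^2+\norm{v}_{\dot H^0}^2)$ the form $\mathcal E$ is comparable to the square of the $(\dot H^0)^2$-norm, namely $2\norm{(u,v)}_{(\dot H^0)^2}^2\le\mathcal E(u,v)\le 4\norm{(u,v)}_{(\dot H^0)^2}^2$. Hence
\begin{equation*}
M_0:=\sup_{t\ge 0}\norm{(u^N(t),v^N(t))}_{(\dot H^0)^2}\le \sqrt{2}\,\norm{(u^{\text{in}},v^{\text{in}})}_{(\dot H^0)^2},
\end{equation*}
uniformly in $N$, which is~(\ref{gal14}) for $s=0$. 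For $s>0$ the plan is to run a Gronwall argument on a \emph{modified} energy built from the second form~(\ref{gal10}), with $M_0$ serving as the only quantity that must be controlled a priori.

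Introduce the bracketed variable $\Xi^N:=(u^N,v^N)-\mathcal B^N+\ve B_3^N(u^N,v^N)$, where $\mathcal B^N$ denotes the $B_2^N$-vector appearing in~(\ref{gal10}); then~(\ref{gal10}) reads $\partial_t\Xi^N=\ve R_{\text{3res}}^N(u^N,v^N)+\ve B_4^N(u^N,v^N)$. Since the Galerkin system is finite-dimensional, $\Xi^N$ is smooth in time and
\begin{equation*}
\tfrac12\tfrac{d}{dt}\norm{\Xi^N}_{(\dot H^s)^2}^2=\operatorname{Re}\big\langle \ve R_{\text{3res}}^N(u^N,v^N)+\ve B_4^N(u^N,v^N),\,\Xi^N\big\rangle_{(\dot H^s)^2}.
\end{equation*}
Two ingredients are needed to close this: (a) a \emph{comparability} bound $\norm{(u^N,v^N)}_{(\dot H^s)^2}\le 2\norm{\Xi^N}_{(\dot H^s)^2}+C(M_0,s)$, so that the unknown is controlled by the modified energy; and (b) a bound of the right-hand side that is \emph{linear} in $\norm{\Xi^N}_{(\dot H^s)^2}^2$ with coefficient depending only on $M_0$.

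Both ingredients rest on the same mechanism, which is the crux. The convenient estimates~(\ref{veB3}) and~(\ref{veB4}), written as pure powers of the $\dot H^s$-norm, are by themselves too weak: pairing~(\ref{veB4}) with $\Xi^N$ would produce a quintic term and destroy any global bound. Instead I would invoke the finer multilinear estimates from the Appendix (Lemmas~\ref{L:B3} and~\ref{L:B4}, together with the $B_2$-estimate and~(\ref{resbound})), in which the full $\dot H^s$ regularity is carried by a \emph{single} factor while the remaining factors are measured in the conserved norm $\dot H^0$, and I would exploit the derivative gains of the operators: $B_2$ gains one derivative, $B_3$ gains two, and $B_4$ gains $\e\in(0,\tfrac12)$. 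Placing the high-regularity slot as low as the gain permits, interpolating against $\dot H^0$, and applying Young's inequality turns each correction term into $\delta\norm{(u^N,v^N)}_{(\dot H^s)^2}+C(\delta,M_0,s)$; taking $\delta$ small and absorbing gives ingredient~(a). The same estimates yield $\norm{\ve R_{\text{3res}}^N}_{(\dot H^s)^2}\le CM_0^2\norm{(u^N,v^N)}_{(\dot H^s)^2}$ and $\norm{\ve B_4^N}_{(\dot H^s)^2}\le CM_0^3\norm{(u^N,v^N)}_{(\dot H^s)^2}$, which after inserting~(a) are linear in $\norm{\Xi^N}_{(\dot H^s)^2}$; this is ingredient~(b). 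I expect the quartic term $\ve B_4^N$ (and, inside $\Xi^N$, the cubic $\ve B_3^N$) to be the main obstacle, precisely because one must never measure more than one factor in $\dot H^s$.

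Combining, the right-hand side is bounded by $C(M_0,s)\big(\norm{\Xi^N}_{(\dot H^s)^2}^2+1\big)$, so
\begin{equation*}
\tfrac{d}{dt}\big(\norm{\Xi^N}_{(\dot H^s)^2}^2+1\big)\le C(M_0,s)\big(\norm{\Xi^N}_{(\dot H^s)^2}^2+1\big),
\end{equation*}
and Gronwall's inequality gives $\norm{\Xi^N(t)}_{(\dot H^s)^2}^2\le\big(\norm{\Xi^N(0)}_{(\dot H^s)^2}^2+1\big)e^{C(M_0,s)T}$ on $[0,T]$. Converting back through ingredient~(a), and noting $\norm{\Xi^N(0)}_{(\dot H^s)^2}\le C\norm{(u^{\text{in}},v^{\text{in}})}_{(\dot H^s)^2}+C(M_0,s)$ since $\mathcal P$ is a norm-nonincreasing projection, yields~(\ref{gal14}). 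All constants are independent of $N$ because $\mathcal P$ contracts every $\dot H^s$ and $M_0$ is uniform in $N$; the resulting bound also depends on $\norm{(u^{\text{in}},v^{\text{in}})}_{(\dot H^s)^2}$ through $\norm{\Xi^N(0)}$, which is how the displayed dependence in~(\ref{gal14}) should be understood.
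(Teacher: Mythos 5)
Your skeleton (conservation of $\mathcal E$ for the $\dot H^0$ bound, the modified variable $\Xi^N=\ve z^N$ from the second form~(\ref{gal10}), then Gronwall) matches the paper, but your crux step has a genuine gap. You propose to close the estimate in a \emph{single} pass at level $s$ by invoking ``finer multilinear estimates from the Appendix (Lemmas~\ref{L:B3} and~\ref{L:B4})\ldots in which the full $\dot H^s$ regularity is carried by a single factor while the remaining factors are measured in $\dot H^0$.'' No such estimates exist in the paper: Lemma~\ref{L:B3} and Lemma~\ref{L:B4} are pure-power bounds with \emph{all} factors in $\dot H^s$, and the same is true of the $B_2$ lemmas; only the resonant bound~(\ref{resbound}) has the mixed form you need. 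So the estimates on which your ingredients (a) and (b) rest are unproved, and proving them would mean redoing the (delicate) convolution estimates of \cite{B-I-T} with asymmetric regularity placement. The fallback your wording suggests --- using the existing pure-power lemmas at a lower regularity level and then interpolating against $\dot H^0$ and absorbing by Young --- does not survive for large $s$: e.g.\ for $B_2$ (gain of one derivative) one gets $\norm{B_2(u^N,v^N)}_{\dot H^s}\leq C\norm{(u^N,v^N)}_{\dot H^{s-1}}^2\leq C M_0^{2/s}\norm{(u^N,v^N)}_{\dot H^s}^{2(s-1)/s}$, and the exponent $2(s-1)/s\geq 1$ as soon as $s\geq 2$, so the term can no longer be absorbed; similarly for $B_3$ and $B_4$.

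The paper avoids this entirely by a device you missed: \emph{iteration in regularity steps of size} $\eps=s/n_0<1/2$. At the first step one works in $\dot H^{\eps}$ and applies~(\ref{veB4}), Lemma~\ref{L:B2} and~(\ref{veB3}) at base level $0$, where the norm is conserved: thus $\norm{\ve B_4^N}_{(\dot H^{\eps})^2}\leq C(M_0)$ and the $B_2^N$, $\ve B_3^N$ corrections are bounded by $C(M_0)$ outright (no absorption needed), while only $\ve R_{\text{3res}}^N$ --- the one operator that genuinely has a mixed estimate,~(\ref{resbound}) --- contributes a term linear in the current norm. Gronwall then gives a uniform bound in $\dot H^{\eps}$, which serves as the new ``conserved'' base level for the next step, and after $n_0$ steps one reaches $\dot H^s$. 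This is how the paper defuses exactly the ``quintic term'' you worried about, using only the stated appendix lemmas. Your closing remark that the bound must also involve $\norm{\Xi^N(0)}_{(\dot H^s)^2}$, hence the $\dot H^s$ norm of the data, is a fair observation (the paper's Gronwall step has the same input at $t=0$), but it does not repair the missing multilinear estimates on which your argument depends.
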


\begin{proof}
	Due to the fact that we are dealing with a finite
	number of ODEs (and finite sums) we observe by straight
	forward calculation (differentiation by parts twice)  as in Section \ref{first-dbp} and Section~\ref{sec-dbp} that a
	solution of ~(\ref{gal1}) also solves~(\ref{gal10}).

	Throughout, we consider $t\in [0,T]$.
By (\ref{gal1}) (or (\ref{gal10})), it is clear that, $\partial_t (u_k^N,v_k^N)=0$ for $|k|>N$. Therefore,
\begin{equation}
  \label{gal15}
  \norm{(I-\mathcal P)(u^N(t),v^N(t))}_{(\dot H^{s})^2} = \norm{(I-\mathcal P)(u^{\text{in}},v^{\text{in}})}_{(\dot H^{s})^2}\leq
  \norm{(u^{\text{in}},v^{\text{in}})}_{(\dot H^{s})^2} \:.
\end{equation}
It follows that $\norm{(u^N(t),v^N(t))}_{(\dot H^{s})^2}$ is bounded for every $t\in [0,T]$. Our goal, however, is to show that the bound on the $H^s$-norm is uniform in $N$.

In~(\ref{gal10}) we set
\begin{equation}
  \label{gal16}
  \ve z^N_k:= \vect{u^N_k}{v^N_k}-\vect{B_2^N(u^N,v^N)_k-B_2^N(u^N,u^N)_k}{B_2^N(u^N,v^N)_k-B_2^N(v^N,v^N)_k} + \ve B_3^N(u^N,v^N)_k,
\end{equation}
then (\ref{gal10}) goes over to
\begin{align}
  \label{gal20}
  \partial_t \ve z^N_k=\ve R_{\text{3res}}^N(u^N,v^N)_k+\ve B_4^N(u^N,v^N)_k,\;\;k\in \ZZ_0\:.
\end{align}

	Since $\ve B_4$ gives a maximal
	gain of $\eps<1/2$ spatial derivatives according to (\ref{veB4}), we fix a positive integer $n_0$ such that
	$s/n_0=\eps<1/2$. Once we establish the uniform bound in $(\dot H^{\eps})^2$, we can iterate the argument and after $n_0$ steps we will have the desired bound
	in $(\dot H^{s})^2$.

Put $M_0:=\norm{(u^{\text{in}},v^{\text{in}})}_{(\dot H^{0})^2}$. Due to the conservation law established in Proposition \ref{gal3}, one has
\begin{align}  \label{H0bound}
\norm{(u^N(t),v^N(t))}_{(\dot H^{0})^2}\leq  C (M_0), \text{\;\;for all\;\;}  t\in [0,T].
\end{align}
 Here $C(M_0)$ is a constant depending on $M_0$, and it may change hereafter from line to line. It is clear that the mapping properties of $\ve R_{\text{3res}}^N$ and $\ve B_4^N$ are the same as the ones of $\ve R_{\text{3res}}$ and $\ve B_4$, and thus by (\ref{resbound}), (\ref{veB4}) and (\ref{H0bound}), we obtain
\begin{equation}
  \label{gal21}
  \norm{\ve R_{\text{3res}}^N(u^N,v^N)}_{(\dot H^{\eps})^2}
  +\norm{\ve B_4^N(u^N,v^N)}_{(\dot H^{\eps})^2}
  \leq  C(M_0)\big(\norm{(u^N,v^N)}_{(\dot H^{\eps})^2}+1\big).
\end{equation}

Taking into account equation~(\ref{gal16}) and the smoothing
properties of $B_2$ and $\ve B_3$ (see Lemma~\ref{L:B2} and the estimate (\ref{veB3})),
we have
\begin{equation}
  \label{gal22}
  \norm{(u^N,v^N)}_{(\dot H^{\eps})^2} \leq
   \norm{\ve z^N}_{(\dot H^{\eps})^2}+C(M_0),
\end{equation}
	and therefore together with~(\ref{gal21}), one has
\begin{equation}
  \label{gal23}
  \norm{\ve R_{\text{3res}}^N(u^N,v^N)}_{(\dot H^{\eps})^2}
  +\norm{\ve B_4^N(u^N,v^N)}_{(\dot H^{\eps})^2} \leq C(M_0)(\norm{\ve z^N}_{(\dot H^{\eps})^2}+1).
\end{equation}
Then, we see from (\ref{gal20}) and (\ref{gal23}) that
\begin{align}  \label{partialz}
\norm{\partial_t \ve z^N}_{(\dot H^{\eps})^2}
\leq C(M_0)(\norm{\ve z^N}_{(\dot H^{\eps})^2}+1).
\end{align}

By (\ref{gal16}) and the fact that $\norm{(u^N(t),v^N(t))}_{(\dot H^{s})^2}<\infty$ for each $t\in [0,T]$, it is clear that $\norm{\ve z^N(t)}_{(\dot H^{s})^2}$ is also finite for every $t\in [0,T]$. Thus, we calculate
\begin{align*}
  \label{gal24}
  \partial_t \norm{\ve z^N}_{(\dot H^{\eps})^2}^2
  = 2\left\langle \partial_t \ve z^N, \ve z^N   \right\rangle_{(\dot H^{\eps})^2}
  &\leq \norm{\partial_t \ve z^N}_{(\dot H^{\eps})^2}^2+\norm{\ve z^N}_{(\dot H^{\eps})^2}^2 \notag\\
  & \leq C(M_0)(\norm{\ve z^N}_{(\dot H^{\eps})^2}^2+1),
\end{align*}
where (\ref{partialz}) has been used.
Then, by means of Gronwall's inequality, we have
\begin{equation*}
  \label{gal28}
  \norm{\ve z^N(t)}_{(\dot H^{\eps})^2}
  \leq  C(M_0,T) \:, \quad t\in[0,T] ,
\end{equation*}
and along with (\ref{gal22}), it follows that
\begin{align*}
 \norm{(u^N(t),v^N(t))}_{(\dot H^{\eps})^2}  \leq C(M_0,T)\:, \quad t\in[0,T] .
\end{align*}
Finally, we iterate the above argument $n_0$ times and conclude
\begin{align*}
 \norm{(u^N(t),v^N(t))}_{(\dot H^{s})^2}  \leq C(M_0,T,s)\:, \quad t\in[0,T] ,
\end{align*}
where the bound is uniform in $N$. It is worth to mention that, the uniform bound above depends on the $\dot H^0$-norm, and not the $\dot H^s$-norm of the initial data.
\end{proof}

We now establish the existence of global solutions (without uniqueness) which is stated in Theorem \ref{exist} for the case
	$s>0$. Also we show the conservation law (\ref{conserve}) and the bound (\ref{gal31}).
The uniqueness and continuous dependence on initial data for $s>0$ will be justified in Section \ref{regular} and \ref{irregular}. The case $s=0$ will be treated in Section \ref{irregular}.

\begin{proof}
Let $s>0$ be given. As before, we let $(u^N(t),v^N(t))$ be the solution of the Galerkin system (\ref{gal1}) on $[0,T]$. Taking some
	$\theta>3/2$, thanks to (\ref{gal1}) and the mapping property of $B_1$ provided in Lemma \ref{L:B1} as well as the conservation law in Proposition \ref{gal3}, we obtain
  \begin{equation*}
    \norm{\partial_t(u^N(t),v^N(t))}_{(\dot H^{-\theta})^2}\leq
    C(\theta)\norm{(u^N(t),v^N(t))}_{(\dot H^{0})^2}^2 \leq  \tilde C(\theta)\norm{(u^{\text{in}},v^{\text{in}})}_{(\dot H^0)^2}^2 \:,
\end{equation*}
that is, $\partial_t(u^N(t),v^N(t))$ is
	bounded in $L^\infty([0,T];(\dot H^{-\theta})^2) \subset
	L^p([0,T];(\dot H^{-\theta})^2)$ uniformly with respect to
	$N$. Furthermore, by virtue of Proposition \ref{high-sobolev}
	we observe that the sequence $(u^N,v^N)$ is uniformly bounded in
	$L^\infty([0,T];(\dot H^{s})^2)\subset L^p([0,T];(\dot
	H^{s})^2)$. Therefore, due to Aubin's Compactness Theorem, for $0<s_0<s$,
    there exists a subsequence, which is still denoted by $(u^N, v^N)$, converging strongly to $(u,v)$ in $L^p([0,T];(\dot H^{s_0})^2)$ and
	$\ast$-weakly in $L^\infty([0,T];(\dot H^{s})^2)$, and along with (\ref{gal14}), we infer
(\ref{gal31}) holds.

	Since the subsequence $(u^N,v^N)$ converges strongly to $(u,v)$ in $L^p([0,T];(\dot
	H^0)^2)$, we can extract a further subsequence $(u^N(t),v^N(t))$
	converging strongly to $(u(t),v(t))$ in $(\dot H^{0})^2$ for almost every $t\in
	[0,T]$. Moreover, by means of Proposition \ref{gal3},
we have $\mathcal E(u^N(t),v^N(t))=\mathcal E(u^{\text{in}},v^{\text{in}})$
	for every $t$, and therefore
	$\mathcal E(u(t),v(t))=\mathcal E(u^{\text{in}},v^{\text{in}})$ for almost every
	$t\in [0,T]$.

	Now we must show that the weak limit $(u,v)$ is
	indeed a solution of the cKdV~(\ref{eq:9}) in the sense of
	Definition~\ref{dfn-solution}. For this purpose we utilize the
	fact that each $(u_k^N,v_k^N)$ is a solution of the Galerkin system (\ref{gal1})
	and hence a solution of
\begin{align}
  \label{gal33}
  &\vect{u_k^N(t)}{v_k^N(t)}-\vect{u^{\text{in}}_k}{v^{\text{in}}_k} \notag\\
  &=\int_0^t
  \vect{\mathcal PB_1(\mathcal Pu^N(\tau),\mathcal Pv^N(\tau))_k-\mathcal PB_1(\mathcal Pu^N(\tau),\mathcal Pu^N(\tau))_k}
  {\mathcal PB_1(\mathcal Pu^N(\tau),\mathcal Pv^N(\tau))_k-\mathcal PB_1(\mathcal Pv^N(\tau),\mathcal Pv^N(\tau))_k} \d\tau \:.
\end{align}
	Using the symmetry of $B_1$ and setting $\mathcal Q:=I-\mathcal P$ we
	can rewrite~(\ref{gal33}) as
\begin{align}
  \label{gal34}
    &\vect{u_k^N(t)}{v_k^N(t)}-\vect{u^{\text{in}}_k}{v^{\text{in}}_k} \notag\\
    &=\int_0^t \vect{\mathcal PB_1(\mathcal Pu^N,\mathcal P(v^N-v))_k+\mathcal PB_1(\mathcal P(u^N-u),\mathcal Pv)_k}
    {\mathcal PB_1(\mathcal Pu^N,\mathcal P(v^N-v))_k+\mathcal PB_1(\mathcal P(u^N-u),\mathcal Pv)_k} \d\tau  \notag\\
    & \hspace{0.2 in} -\int_0^t \vect{\mathcal PB_1(\mathcal P(u^N-u),\mathcal P(u^N+u))_k+\mathcal PB_1(u,\mathcal Qv)_k+\mathcal PB_1(\mathcal Qu,\mathcal Pv)_k}
    {\mathcal PB_1(\mathcal P(v^N-v),\mathcal P(v^N+v))_k+\mathcal PB_1(u,\mathcal Qv)_k+\mathcal PB_1(\mathcal Qu,\mathcal Pv)_k}\d\tau \notag\\
    &\hspace{0.2 in} +\int_0^t \vect{\mathcal PB_1(\mathcal Qu,\mathcal Pu+u)_k-\mathcal QB_1(u,v)_k+\mathcal QB_1(u,u)_k}
    {\mathcal PB_1(\mathcal Qv,\mathcal Pv+v)_k-\mathcal QB_1(u,v)_k+\mathcal QB_1(v,v)_k}\d\tau \notag\\
    &\hspace{0.2 in} +\int_0^t \vect{B_1(u,v)_k-B_1(u,u)_k}{B_1(u,v)_k-B_1(v,v)_k} \d\tau \:,\;\;\;\; k\in \ZZ_0.
\end{align}
	First we observe that due to the
	convergence of the subsequence $(u^N,v^N)\rightarrow (u,v)$ strongly in the
	space $L^p([0,T];(\dot H^{s_0})^2)$ and due to the fact
	$(u,v)\in L^\infty([0,T];(\dot H^{s})^2)$, we
	see that the first three integral terms on the right-hand side
	are finite and converge to zero as
	$N\rightarrow\infty$. We demonstrate this for a typical term $\int_0^t \mathcal PB_1(\mathcal Pu^N,\mathcal P(v^N-v))_k\d\tau$ (terms
	of the same structure are of course treated similarly). By using Cauchy-Schwarz inequality, for $\theta>3/2$, we deduce
\begin{equation}
  \label{gal35}
  \begin{split}
    &\left| \int_0^t\mathcal PB_1(\mathcal Pu^N,\mathcal P(v^N-v))_k  \d\tau \right|  \\
    &\leq   |k|^{\theta}
    \int_0^t\left(\sum_{j\in\ZZ_0} |j|^{-2\theta}|\mathcal PB_1(\mathcal Pu^N,\mathcal P(v^N-v))_j|^2\right)^{1/2}\d\tau  \\
    &\leq  |k|^{\theta}\int_0^t \norm{B_1(\mathcal Pu^N,\mathcal P(v^N-v))}_{\dot H^{-\theta}} \d\tau  \\
    &\leq  C|k|^{\theta}\int_0^t \norm{u^N}_{\dot H^0}\norm{v^N-v}_{\dot H^0} \d\tau  \\
    &\leq  \tilde C|k|^{\theta}\norm{v^N-v}_{L^1([0,T];\dot H^0)} \longrightarrow 0 \quad \text{as $N\rightarrow\infty$ \:,}
  \end{split}
\end{equation}
	where we have used the mapping property of $B_1$ provided in Lemma~\ref{L:B1}, and the uniform boundedness of the $\dot H^0$-norm of $u^N$.
Next we treat (similar terms are again treated
	similarly) the term $\mathcal QB_1(u,v)_k$.
\begin{equation*}
  \label{gal36}
  \begin{split}
    \norm{\mathcal QB_1(u,v)}_{\dot H^{-\theta}}^2 &
    \leq  C\sum_{|k|>N}|k|^{2(1-\theta)}\left(\sum_{k=k_1+k_2}|u_{k_1}||v_{k_2}|\right)^2 \\
    & \leq  C\norm{u}_{\dot H^0}^2\norm{v}_{\dot H^0}^2\sum_{|k|>N}|k|^{2(1-\theta)}  \longrightarrow 0 \quad \text{as $N\rightarrow\infty$} \:,
  \end{split}
\end{equation*}
	due to $\theta>3/2$. Thus, with the similar arguments as in~(\ref{gal35}) we derive
\begin{equation}
  \label{gal37}
  \left| \int_0^t\mathcal QB_1(u,v)_k\d\tau \right|
  \leq  |k|^{\theta} \int_0^t   \norm{\mathcal QB_1(u,v)}_{\dot H^{-\theta}} \d\tau
  \longrightarrow 0, \;\;\text{as}\;\;N\rightarrow\infty,
\end{equation}
	where we have used Lebesgue's Dominated
	Convergence Theorem with $\norm{u(t)}_{\dot
	H^0}\norm{v(t)}_{\dot H^0}$ (which is bounded a.e.) as
	a majorant. The remaining terms are treated in the same
	manner. Passing to the limit in~(\ref{gal34}) and using that the subsequence
	$(u_k^N(t),v_k^N(t))\rightarrow (u_k(t),v_k(t))$
	for each fixed $k\in\ZZ_0$ a.e. on $[0,T]$ we
	obtain
\begin{equation}
  \label{gal38}
  \vect{u_k(t)}{v_k(t)}-\vect{u^{\text{in}}_k}{v^{\text{in}}_k}=
  \int_0^t
  \vect{B_1(u(\tau),v(\tau))_k-B_1(u(\tau),u(\tau))_k}
  {B_1(u(\tau),v(\tau))_k-B_1(v(\tau),v(\tau))_k} \d\tau \:,
\end{equation}
	which is true for almost every $t\in [0,T]$. Since
	$u_k(t)$ and $v_k(t)$ is absolutely continuous over the interval $[0,T]$
	(see Remark~\ref{rmk-abscont}), this identity holds for every
	$t\in [0,T]$. That is $(u,v)$ is indeed a
	solution of the cKdV (\ref{eq:9}) in the sense of Definition~\ref{dfn-solution}.
\end{proof}

\smallskip

\section{Uniqueness for $s>1/2$} \label{regular}
In the previous section we established global existence,
without uniqueness, of solutions to the cKdV
system~(\ref{eq:9}) in the space $(\dot H^s)^2$ for $s>0$. Here, we will using the Banach Fixed Point Theorem to establish the uniqueness of solutions, as well as the continuous dependence on initial data for $s>1/2$. The case $s\in [0,1/2]$ will be treated in the next section.

In~\cite{B-I-T}, where the periodic KdV was studied, the authors also used the contraction mapping argument to establish the uniqueness. However, their technique depends on the invertibility of a linear operator, which relies on the fact
that one can solve a 1d-boundary value problem for an ODE explicitly and estimate its solution. But such method is infeasible to adopt here for our cKdV system, since the linearization of the left-hand side of (\ref{eq:13}) may not be invertible,
where the difficulty lies in explicitly solving a
	boundary value problem for a system of two coupled ODEs in
	which the situation is much more complicated.
In order to bypass this obstacle, we split solutions properly into high and low Fourier modes, and recast the differentiation by parts procedure to terms involving high frequencies for the sake of taking advantage of the time-averaging induced squeezing. Similar idea was also used in \cite{B-I-T} to treat the so-called less regular initial data ($0\leq s\leq 1/2$), and in \cite{KwoOh} (following \cite{B-I-T})
to study the unconditional uniqueness of the modified KdV equation for $s\geq 1/2$. We believe that this kind of approach is more natural and general, especially for systems, which avoids studying the invertibility of a linear operator, so it is easier to implement to other dispersive equations for establishing uniqueness of solutions. In fact, one of the main purposes of this paper is to demonstrate this idea for such a typical system.

Let $N\geq 1$ be an integer that will be selected later. Recall, $\mathcal P$ defined in (\ref{projection}) denotes the projection on the low Fourier modes $|k|\leq N$. In addition,
we define $\mathcal Q=I-\mathcal P$, where $I$ is the identity map. Observe that $\mathcal P$ and $\mathcal Q$ both depend on $N$.

We decompose $B_1(u,v)$ by splitting the Fourier modes of $u$ and $v$ into high and low modes. More precisely,
\begin{align*}
B_1(u,v)=B_1(\mathcal Pu,\mathcal Pv)+\left[B_1(\mathcal Pu,\mathcal Qv)+B_1(\mathcal Qu,v)\right].
\end{align*}
Thus, the original cKdV (\ref{eq:10}) can be written as
\begin{align} \label{kdv-1}
\partial_t \vect{u}{v}=\ve B_1^P(u,v)+\ve B_1^Q(u,v)
\end{align}
where vector functions $\ve B_1^P(u,v)$ and $\ve B_1^Q(u,v)$ are defined by
\begin{align} \label{B1-P}
\ve B_1^P(u,v):=\vect{B_1(\mathcal Pu,\mathcal Pv)-B_1(\mathcal Pu,\mathcal Pu)}{B_1(\mathcal Pu,\mathcal Pv)-B_1(\mathcal Pv,\mathcal Pv)},
\end{align}
and
\begin{align} \label{B1-Q}
\ve B_1^Q(u,v):=\vect{B_1(\mathcal Pu,\mathcal Qv)+B_1(\mathcal Qu,v)}{B_1(\mathcal Pu,\mathcal Qv)+B_1(\mathcal Qu,v)}
-\vect{B_1(\mathcal Pu,\mathcal Qu)+B_1(\mathcal Qu, u)}{B_1(\mathcal Pv,\mathcal Qv)+B_1(\mathcal Qv, v)}.
\end{align}

Unlike the first differentiation by parts performed in Section \ref{first-dbp}, now we apply the differentiation by parts procedure for $\ve B_1^Q(u,v)$ only, and leave $\ve B_1^P(u,v)$ untouched. We demonstrate the computation for a typical term $B_1(\mathcal Pu,\mathcal Qv)$.
In fact, for $k\in \ZZ_0$,
\begin{align} \label{u-1}
B_1(\mathcal Pu,\mathcal Qv)_k&=\frac{1}{2}ik\sum_{k_1+k_2=k}e^{3ikk_1k_2t}\mathcal Pu_{k_1} \mathcal Qv_{k_2} \notag\\
&=\frac{1}{6}\partial_t \left(\sum_{k_1+k_2=k}\frac{e^{3ikk_1k_2t}\mathcal Pu_{k_1}\mathcal Qv_{k_2}}{k_1 k_2}\right) \notag\\
&\hspace{0.2 in}-\frac{1}{6}\sum_{k_1+k_2=k}\frac{e^{3ikk_1k_2t}}{k_1 k_2}(\mathcal Pu_{k_1}\partial_t \mathcal Qv_{k_2}+\mathcal Qv_{k_2}\partial_t \mathcal Pu_{k_1}).
\end{align}

If we denote
\begin{align*}
\mathcal P(u_{k_1} v_{k_2})=\Big\{
\begin{array}{lcl} u_{k_1} v_{k_2}&\text{if}& |k_1+k_2|\leq N  \\ 0&\text{if} & |k_1+k_2|>N \end{array}
\end{align*}
and
\begin{align*}
\mathcal Q(u_{k_1} v_{k_2})=\Big\{
\begin{array}{lcl} u_{k_1} v_{k_2}&\text{if}& |k_1+k_2|> N  \\ 0&\text{if} & |k_1+k_2|\leq N, \end{array}
\end{align*}
then by (\ref{eq:9}),
\begin{align*}
\partial_t \mathcal Pu_k=\frac{1}{2}ik\sum_{k_1+k_2=k}e^{3ikk_1k_2t}(\mathcal P(u_{k_1}v_{k_2})-\mathcal P(u_{k_1}u_{k_2})),
\end{align*}
and
\begin{align*}
\partial_t \mathcal Qv_k = \frac{1}{2}ik\sum_{k_1+k_2=k}e^{3ikk_1k_2t}\left(\mathcal Q(u_{k_1}v_{k_2})-\mathcal Q(v_{k_1}v_{k_2})\right).
\end{align*}
Therefore, for $k\in \ZZ_0$,
\begin{align} \label{u-2}
&-\frac{1}{6}\sum_{k_1+k_2=k}\frac{e^{3ikk_1k_2t}}{k_1 k_2}(\mathcal Pu_{k_1}\partial_t \mathcal Qv_{k_2}+\mathcal Qv_{k_2}\partial_t \mathcal Pu_{k_1})\notag\\
&=-\frac{1}{12}i \sum_{k_1+k_2+k_3=k}\frac{e^{3i(k_1+k_2)(k_2+k_3)(k_3+k_1)t}}{k_1}
\big[\mathcal Pu_{k_1} \mathcal Q(u_{k_2}v_{k_3})
-\mathcal Pu_{k_1}  \mathcal Q(v_{k_2}v_{k_3}) \notag\\
&\hspace{2.5 in}+\mathcal Qv_{k_1} \mathcal P(u_{k_2}v_{k_3})
-\mathcal Qv_{k_1} \mathcal P(u_{k_2}u_{k_3})\big] \notag\\
&=:f_k.
\end{align}

Combining (\ref{u-1}) and (\ref{u-2}) yields
\begin{align*}
B_1(\mathcal Pu,\mathcal Qv)_k
=\partial_t B_2(\mathcal Pu,\mathcal Qv)_k +f_k, \;\;k\in \ZZ_0,
\end{align*}
where $f_k$ is defined in (\ref{u-2}). Similarly, we can apply differentiation by parts for all terms in  $\ve B_1^Q(u,v)$ defined in (\ref{B1-Q}), and obtain the \emph{modified first form of the cKdV}:
\begin{align} \label{m-1st-form}
\partial_t \left[\vect{u}{v}-\ve B_2^Q(u,v)\right] =\ve B_1^P(u,v)+ \ve R_3^Q(u,v),
\end{align}
where $\ve B_1^P(u,v)$ is defined in (\ref{B1-P}) and $\ve B_2^Q(u,v)$ is defined by
\begin{align} \label{B2-Q}
\ve B_2^Q(u,v):=\vect{B_2(\mathcal Pu,\mathcal Qv)+B_2(\mathcal Qu, v)}
{B_2(\mathcal Pu,\mathcal Qv)+B_2(\mathcal Qu, v)}-\vect{B_2(\mathcal Pu,\mathcal Qu)+B_2(\mathcal Qu, u)}{B_2(\mathcal Pv,\mathcal Qv)+B_2(\mathcal Qv, v)}.
\end{align}
For the sake of conciseness, we do not provide the exact formula of $\ve R_3^Q(u,v)$. But, notice that $f_k$ defined in (\ref{u-2}) is a typical part of $\ve R_3^Q(u,v)_k$. Thus, all terms in each components of $\ve R_3^Q(u,v)_k$ can be written in the form
\begin{align} \label{u-3}
\pm \frac{1}{12}i \sum_{k_1+k_2+k_3=k \atop \{k_1,k_2,k_3\} \in \mathcal D}\frac{e^{3i(k_1+k_2)(k_2+k_3)(k_3+k_1)t}}{k_1}
\phi_{k_1}\psi_{k_2}\xi_{k_3}, \;\;k\in \ZZ_0,
\end{align}
where $\mathcal D\subset \ZZ_0^3$ is a set of indices that might vary for different terms in $\ve R_3^Q(u,v)$,
and each of $\phi$, $\psi$, $\xi$ is either $u$ or $v$.
For instance, considering the first term of $f_k$ defined in (\ref{u-2}),
then $(\phi,\psi,\xi)=(u,u,v)$, and
the summation is carried out over $\mathcal D=\{\{k_1,k_2,k_3\} \in \ZZ_0^3: |k_1|\leq N, \; |k_2+k_3|> N\}$.

Since (\ref{u-3}) is essentially $R_3(\phi,\psi,\xi)$ with summation over a set $\mathcal D$, by the mapping property of $R_3$ provided in Lemma \ref{L:R3} we have
\begin{align} \label{est-til-R3}
\norm{\ve R_3^Q(u,v)}_{(\dot H^s)^2}
\leq C(s) \norm{(u,v)}_{(\dot H^s)^2}^3,
\end{align}
and
\begin{align} \label{est-til-R3'}
&\norm{\ve R_3^Q(u,v)-\ve R_3^Q(\tilde u,\tilde v)}_{(\dot H^s)^2} \notag\\
&\leq C(s) \norm{(u,v)-(\tilde u,\tilde v)}_{(\dot H^s)^2}
\left(\norm{(u,v)}^2_{(\dot H^s)^2}+ \norm{(\tilde u,\tilde v)}^2_{(\dot H^s)^2}\right),
\end{align}
for $s>1/2$.

Concerning $\ve B_1^P(u,v)$, the following result shows that the smoothing property of $\ve B_1^P$ is better than the one of $B_1$ provided in Lemma \ref{L:B1}.
\begin{lemma}
For $s\geq 0$, the operator $\ve B_1^P$ defined in (\ref{B1-P}) maps $\dot H^0 \times \dot H^0$ into $\dot H^s \times \dot H^s$ and satisfy
\begin{align} \label{est-low-B1'}
\norm{\ve B_1^P(u,v)}_{(\dot H^s)^2}\leq C(s,N) \norm{(u,v)}_{(\dot H^0)^2}^2,
\end{align}
and
\begin{align} \label{est-low-B1''}
&\norm{\ve B_1^P(u,v)-\ve B_1^P(\tilde u,\tilde v)}_{(\dot H^s)^2} \notag\\
&\leq C(s,N) \norm{(u,v)-(\tilde u,\tilde v)}_{(\dot H^0)^2}\left(\norm{(u,v)}_{(\dot H^0)^2}+\norm{(\tilde u,\tilde v)}_{(\dot H^0)^2}\right).
\end{align}
\end{lemma}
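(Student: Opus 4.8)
The plan is to exploit the decisive fact that $\ve B_1^P(u,v)$ is supported on \emph{low} Fourier modes only, which turns the desired gain of $s$ derivatives into an essentially free consequence of finite frequency support. Since $\mathcal Pu$ and $\mathcal Pv$ are supported on $\{|k|\le N\}$, and since $B_1(\phi,\psi)_k$ is built from the convolution $\sum_{k_1+k_2=k}\phi_{k_1}\psi_{k_2}$, each bilinear piece appearing in (\ref{B1-P})---namely $B_1(\mathcal Pu,\mathcal Pv)$, $B_1(\mathcal Pu,\mathcal Pu)$ and $B_1(\mathcal Pv,\mathcal Pv)$---vanishes for $|k|>2N$. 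Hence every component of $\ve B_1^P(u,v)$ lives on the finite set $\{0<|k|\le 2N\}$, on which all homogeneous Sobolev norms are equivalent. Concretely, for any $w$ supported on $\{0<|k|\le 2N\}$, any $s\ge 0$, and any fixed $\theta>3/2$, I would record the comparison
\begin{equation*}
\norm{w}_{\dot H^s}^2=\sum_{0<|k|\le 2N}|k|^{2s+2\theta}\,|k|^{-2\theta}|w_k|^2\le (2N)^{2s+2\theta}\norm{w}_{\dot H^{-\theta}}^2,
\end{equation*}
so that $\norm{w}_{\dot H^s}\le (2N)^{s+\theta}\norm{w}_{\dot H^{-\theta}}$.

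With this reduction in hand, the bound (\ref{est-low-B1'}) would follow directly from the mapping property of $B_1$ already recorded in Lemma~\ref{L:B1}, which supplies $\norm{B_1(\phi,\psi)}_{\dot H^{-\theta}}\le C(\theta)\norm{\phi}_{\dot H^0}\norm{\psi}_{\dot H^0}$ for $\theta>3/2$ (the factor $e^{3ikk_1k_2t}$ being of modulus one plays no role). Applying this to each bilinear piece, using $\norm{\mathcal P\phi}_{\dot H^0}\le\norm{\phi}_{\dot H^0}$, and then invoking the norm comparison above, I would obtain
\begin{equation*}
\norm{\ve B_1^P(u,v)}_{(\dot H^s)^2}\le (2N)^{s+\theta}\norm{\ve B_1^P(u,v)}_{(\dot H^{-\theta})^2}\le C(s,N)\norm{(u,v)}_{(\dot H^0)^2}^2,
\end{equation*}
which is exactly (\ref{est-low-B1'}).

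For the Lipschitz estimate (\ref{est-low-B1''}) the only extra ingredient is a bilinear telescoping. I would write each difference of bilinear terms as, for instance,
\begin{equation*}
B_1(\mathcal Pu,\mathcal Pv)-B_1(\mathcal P\tilde u,\mathcal P\tilde v)=B_1\big(\mathcal P(u-\tilde u),\mathcal Pv\big)+B_1\big(\mathcal P\tilde u,\mathcal P(v-\tilde v)\big),
\end{equation*}
and treat the quadratic terms $B_1(\mathcal Pu,\mathcal Pu)-B_1(\mathcal P\tilde u,\mathcal P\tilde u)$ analogously. Each summand is again supported on $\{0<|k|\le 2N\}$, so the identical two-step estimate (Lemma~\ref{L:B1} followed by the norm comparison) applies to every term; collecting them produces the factor $\norm{(u,v)}_{(\dot H^0)^2}+\norm{(\tilde u,\tilde v)}_{(\dot H^0)^2}$ multiplying $\norm{(u,v)-(\tilde u,\tilde v)}_{(\dot H^0)^2}$, which is precisely (\ref{est-low-B1''}).

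I do not expect any genuine analytical obstacle here; the argument is finite-dimensional bookkeeping. The one point that I would emphasize, rather than hide, is that the constant $C(s,N)$ necessarily grows with $N$, reflecting that the gain of $s$ derivatives is \emph{not} a true smoothing estimate but merely an artifact of the finite frequency support. This is exactly the reason the construction splits $B_1$ into the low-mode part $\ve B_1^P$, handled crudely here, and the high-mode part $\ve B_1^Q$, handled by the differentiation by parts of (\ref{m-1st-form}), and it is why $N$ must be chosen appropriately in the contraction argument to follow.
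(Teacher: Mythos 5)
Your proposal is correct, and it rests on the same decisive observation as the paper's proof: every bilinear piece of $\ve B_1^P(u,v)$ is supported on the finitely many modes $0<|k|\le 2N$, so the gain of $s$ derivatives costs only an $N$-dependent constant, and the Lipschitz bound (\ref{est-low-B1''}) follows from bilinear telescoping exactly as you wrote. The execution differs in one step: the paper estimates $\norm{B_1(\mathcal Pu,\mathcal Pv)}_{\dot H^s}$ directly, bounding the weight $|k|^{1+s}$ by $(2N)^{1+s}$ and applying Cauchy--Schwarz to the convolution sum $\sum_{k_1+k_2=k}|u_{k_1}||v_{k_2}|\le\norm{u}_{\dot H^0}\norm{v}_{\dot H^0}$, picking up a counting factor from the finitely many surviving output modes; you instead interpose the norm comparison $\norm{w}_{\dot H^s}\le (2N)^{s+\theta}\norm{w}_{\dot H^{-\theta}}$ for band-limited $w$ and then quote Lemma~\ref{L:B1}. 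Both are valid; yours is slightly more modular, reusing the appendix estimate rather than re-deriving the convolution bound, while the paper's is self-contained and yields explicit constants. Your closing remark --- that $C(s,N)$ necessarily grows with $N$, so the smallness in the contraction argument must come from the time-averaging squeezing in $\ve B_2^Q$ (Lemma~\ref{lem-1}) and from taking $T^*$ small --- matches the paper's logic precisely.
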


\begin{proof}
We consider a typical term $B_1(\mathcal Pu,\mathcal Pv)$. The estimates of the rest terms are similar.
Indeed, by the definition (\ref{eq:10a}) of $B_1$,
\begin{align*}
\norm{B_1(\mathcal Pu,\mathcal Pv)}^2_{\dot H^s}
&\leq \frac{1}{4} \sum_{|k|\leq 2N}|k|^{2+2s}\left(\sum_{\substack{k_1+k_2=k \atop |k_1|, |k_2|\leq N}}|u_{k_1}||v_{k_2}| \right)^2 \notag\\
& \leq \frac{1}{4}(2N)^{2+2s}(2N+1)\norm{u}_{\dot H^0}^2 \norm{v}_{\dot H^0}^2.
\end{align*}
In addition, since $B_1(\phi,\psi)$ is a bilinear operator, it follows that
\begin{align*}
&\norm{B_1(\mathcal Pu,\mathcal Pv)-B_1(\mathcal P\tilde u,\mathcal P\tilde v)}_{\dot H^s} \notag\\
&\leq C(s,N)(\norm{u-\tilde u}_{\dot H^0} \norm{v}_{\dot H^0}+\norm{v-\tilde v}_{\dot H^0} \norm{\tilde u}_{\dot H^0}).
 \end{align*}
\end{proof}

Furthermore, the operator $\ve B_2^Q$ defined in (\ref{B2-Q}) has the following mapping property stated in Lemma \ref{lem-1}, which indicates that the corresponding constant decreases to zero as $N\rightarrow \infty$.
This reflects the time-averaging induced squeezing.
\begin{lemma} \label{lem-1}
For any real number $s\geq 0$, the operator $\ve B_2^Q$ defined in (\ref{B2-Q}) maps $(\dot H^s)^2$ into $(\dot H^s)^2$ and satisfies
\begin{align} \label{est-til-B2}
\norm{\ve B_2^Q(u,v)}_{(\dot H^s)^2}\leq C(s)\frac{1}{N}\norm{(u,v)}_{(\dot H^s)^2}^2,
\end{align}
and
\begin{align} \label{est-til-B2'}
&\norm{\ve B_2^Q(u,v)-\ve B_2^Q(\tilde u,\tilde v)}_{(\dot H^s)^2}  \notag\\
&\leq C(s)\frac{1}{N}\norm{(u,v)-(\tilde u,\tilde v)}_{(\dot H^s)^2}
\left(\norm{(u,v)}_{(\dot H^s)^2}+\norm{(\tilde u,\tilde v)}_{(\dot H^s)^2}\right)    .
\end{align}
\end{lemma}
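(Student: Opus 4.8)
The plan is to reduce the estimate to its bilinear building blocks and extract the factor $1/N$ from the denominator $k_1 k_2$ in the definition of $B_2$, exploiting the crucial fact that every summand of $\ve B_2^Q$ carries at least one factor $\mathcal Q$. Inspecting (\ref{B2-Q}), each component of $\ve B_2^Q(u,v)$ is a finite linear combination of terms of the two types $B_2(\mathcal P\phi,\mathcal Q\psi)$ and $B_2(\mathcal Q\phi,\psi)$, where $\phi,\psi\in\{u,v\}$. In the first type the summation is restricted to $|k_1|\leq N$, $|k_2|>N$, and in the second to $|k_1|>N$ (with $\psi$ unrestricted); in either case \emph{at least one} of the indices $k_1,k_2$ satisfies $|k_\ell|>N$. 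Since $|e^{3ikk_1k_2t}|=1$, the triangle inequality gives, uniformly in $t$,
\begin{equation*}
|B_2(\mathcal A\phi,\mathcal B\psi)_k|\leq \frac{1}{6}\sum_{k_1+k_2=k}{}^{\!*}\,\frac{1}{|k_1||k_2|}|\phi_{k_1}||\psi_{k_2}|,
\end{equation*}
where $\sum^{*}$ denotes the restricted sum and one index exceeds $N$. Thus it suffices to prove the single-term bound $\norm{B_2(\mathcal A\phi,\mathcal B\psi)}_{\dot H^s}\leq \frac{C(s)}{N}\norm{\phi}_{\dot H^s}\norm{\psi}_{\dot H^s}$, from which (\ref{est-til-B2}) follows by summing finitely many terms.

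For the core estimate I would split the weight using $|k|^s\leq 2^s(|k_1|^s+|k_2|^s)$ (valid for $s\geq 0$), reducing to bounding $|k|^s|B_2(\mathcal A\phi,\mathcal B\psi)_k|$ by a convolution. In each resulting case I extract $1/N$ from the \emph{high} index: if that index is, say, $k_1$ with $|k_1|>N$, I write $\frac{1}{|k_1|}<\frac{1}{N}$, spending one of the two spare negative powers furnished by the $\frac{1}{k_1 k_2}$ denominator; the remaining power $\frac{1}{|k_2|}$ (or $\frac{1}{|k_1|}$, whichever survives) delivers $\ell^1$-summability. Concretely, after extracting $\frac{1}{N}$ one is left with a convolution $\frac{1}{N}(a*b)_k$ in which, using $|k_\ell|\geq 1$ and $s\geq 0$ to absorb any leftover nonnegative power into the $\dot H^s$ weight, one factor is the $\ell^2$-sequence $a_{k_1}=|k_1|^s|\phi_{k_1}|$ (so $\norm{a}_{\ell^2}=\norm{\phi}_{\dot H^s}$) and the other is $b_{k_2}=|k_2|^{s-1}|\psi_{k_2}|$. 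Young's inequality gives $\norm{a*b}_{\ell^2}\leq\norm{a}_{\ell^2}\norm{b}_{\ell^1}$, and Cauchy-Schwarz controls
\begin{equation*}
\norm{b}_{\ell^1}=\sum_{k_2\in\ZZ_0}\frac{1}{|k_2|}\,|k_2|^s|\psi_{k_2}|\leq\Big(\sum_{k_2\in\ZZ_0}|k_2|^{-2}\Big)^{1/2}\norm{\psi}_{\dot H^s},
\end{equation*}
the series converging since its exponent exceeds $1$. The symmetric cases (high index $k_2$, or the other splitting of $|k|^s$) are handled identically, always arranging the survivor into an $\ell^1$ factor; this yields the claimed $\frac{C(s)}{N}\norm{\phi}_{\dot H^s}\norm{\psi}_{\dot H^s}$.

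Finally, the Lipschitz bound (\ref{est-til-B2'}) follows from bilinearity of $B_2$ together with the boundedness just established: writing differences such as $B_2(\mathcal P u,\mathcal Q v)-B_2(\mathcal P\tilde u,\mathcal Q\tilde v)=B_2(\mathcal P(u-\tilde u),\mathcal Q v)+B_2(\mathcal P\tilde u,\mathcal Q(v-\tilde v))$ (and similarly for the quadratic terms $B_2(\mathcal P u,\mathcal Q u)$, where one splits $u-\tilde u$ into the two slots) and applying the single-term bound to each piece produces exactly the structure on the right-hand side of (\ref{est-til-B2'}). The only delicate point throughout is the case bookkeeping: one must verify that in \emph{every} distribution of the weight $|k|^s$ the factor $1/N$ is drawn from an index genuinely constrained to exceed $N$, while the other spare power still leaves an $\ell^1$-summable sequence; once this accounting is organized, each case is a routine application of Young's and Cauchy-Schwarz inequalities.
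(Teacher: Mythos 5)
Your proposal is correct and takes essentially the same approach as the paper: both arguments rest on the observation that every summand of $\ve B_2^Q$ carries a $\mathcal Q$, so one factor $1/|k_j|$ in the $B_2$ denominator is bounded by $1/N$, and both then transfer the weight $|k|^s$ onto the constituent frequencies and sum the remaining convolution by Cauchy--Schwarz (the paper phrases this via duality against $z\in\dot H^{-s}$ plus two applications of Cauchy--Schwarz, you via Young's inequality $\ell^1\ast\ell^2\to\ell^2$ --- the same mechanics). The Lipschitz estimate obtained from bilinearity is identical in both treatments.
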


\begin{proof}
 Observe that every term in $\ve B_2^Q(u,v)$ contains the operator $\mathcal Q$ (projection on high frequencies $|k|>N$), which is the reason that $1/N$ appears in the estimates (\ref{est-til-B2}) and (\ref{est-til-B2'}).
To see this, let us consider a typical term, say, $B_2(\mathcal Pu,\mathcal Qv)$. The rest terms can be estimated similarly. We let $z$ be an element in $\dot H^{-s}$. Consider
\begin{align*}
|(B_2(\mathcal Pu,\mathcal Qv),z)|\leq \frac{1}{6}\sum_{k\in \ZZ_0} \sum_{k_1+k_2=k} \frac{|\mathcal Pu_{k_1}||\mathcal Qv_{k_2}||z_k|}{|k_1||k_2|}
=\frac{1}{6}\sum_{0<|k_1|\leq N} \sum_{|k_2|>N} \frac{|u_{k_1}||v_{k_2}||z_{k_1+k_2}|}{|k_1||k_2|}.
\end{align*}

Set $U_k=|u_k||k|^{-\alpha}$, $V_k=|v_k||k|^s$, $Z_k=|z_k||k|^{-s}$,
where $s\geq 0$, $0\leq \alpha<1/2$. Then
\begin{align*}
|\langle B_2(\mathcal Pu,\mathcal Qv),z \rangle |
&\leq \frac{1}{6}\sum_{0<|k_1|\leq N} \sum_{|k_2|>N} \frac{|U_{k_1}||V_{k_2}||Z_{k_1+k_2}||k_1+k_2|^s}{|k_1|^{1-\alpha}|k_2|^{1+s}} \notag\\
&\leq \frac{1}{6}\sum_{0<|k_1|\leq N} \sum_{|k_2|>N} \frac{|U_{k_1}||V_{k_2}||Z_{k_1+k_2}||2k_2|^s}{|k_1|^{1-\alpha}|k_2|^{1+s}} \notag\\
&\leq C(s)\sum_{0<|k_1|\leq N} \sum_{|k_2|>N} \frac{|U_{k_1}||V_{k_2}||Z_{k_1+k_2}|}{|k_1|^{1-\alpha}|k_2|} \notag\\
&\leq C(s)\frac{1}{N} \sum_{0<|k_1|\leq N} \frac{|U_{k_1}|}{|k_1|^{1-\alpha}} \sum_{|k_2|>N} |V_{k_2}||Z_{k_1+k_2}|\notag\\
&\leq C(s)\frac{1}{N} \left( \sum_{k\in \ZZ_0} \frac{1}{k^{2-2\alpha}}\right)^{\frac{1}{2}} \norm{U}_{\dot H^0} \norm{V}_{\dot H^0} \norm{Z}_{\dot H^0} \notag\\
&\leq C(s,\alpha)\frac{1}{N} \norm{u}_{\dot H^{-\alpha}} \norm{v}_{\dot H^s} \norm{z}_{\dot H^{-s}}.
\end{align*}
By duality, this implies
\begin{align*}
\norm{B_2(\mathcal Pu,\mathcal Qv)}_{\dot H^s} \leq C(s,\alpha)\frac{1}{N} \norm{u}_{\dot H^{-\alpha}} \norm{v}_{\dot H^s}, \text{\;\;for\;\;} s\geq 0, \;0\leq \alpha<1/2.
\end{align*}
Furthermore, by the bilinearity of $B_2(\phi,\psi)$, it is easy to see that
\begin{align*}
&\norm{B_2(\mathcal Pu,\mathcal Qv)-B_2(\mathcal P\tilde u,\mathcal Q\tilde v)}_{\dot H^s} \notag\\
&\leq C(s,\alpha)\frac{1}{N}(\norm{u-\tilde u}_{\dot H^{-\alpha}} \norm{v}_{\dot H^s}+\norm{\tilde u}_{\dot H^{-\alpha}} \norm{v-\tilde v}_{\dot H^s}).
\end{align*}
Obviously $\norm{\phi}_{\dot H^{-\alpha}}\leq \norm{\phi}_{\dot H^{s}}$ for $s\geq 0$ and $\alpha \in [0,1/2)$, so (\ref{est-til-B2}) and (\ref{est-til-B2'}) hold.
\end{proof}

Now, with the mapping properties of $\ve B_1^P$, $\ve B_2^Q$ and $\ve R_3^Q$ discussed above, we prove the uniqueness of solutions and continuous dependence on initial data, which are stated in Theorem \ref{exist}, for $s>1/2$. The less regular case $s\in [0,1/2]$ will be considered in the next section.

\begin{proof}
Integrating the modified first form (\ref{m-1st-form}) gives us
\begin{align} \label{m-int-1st}
\vect{u}{v}(t)-\vect{u}{v}(0) =&\ve B_2^Q(u(t),v(t))-\ve B_2^Q(u(0),v(0)) \notag\\
&+\int_0^t \left[\ve B_1^P(u(\t),v(\t))+ \ve R_3^Q(u(\t),v(\t)) \right]d\t.
\end{align}

Let $(y(t),z(t)):=(u(t),v(t))-(u^{\text{in}},v^{\text{in}})$. In terms of the new variables, (\ref{m-int-1st}) reads
\begin{align} \label{fix-eqn}
(y,z)=\mathcal F(y,z)
\end{align}
where
\begin{align} \label{def-F}
\mathcal F&(y,z)(t):=\ve B_2^Q(y(t)+u^{\text{in}},z(t)+v^{\text{in}})-\ve B_2^Q(u^{\text{in}},v^{\text{in}})  \notag\\
&+\int_0^t \left[\ve B_1^P(y(\t)+u^{\text{in}},z(\t)+v^{\text{in}})+\ve R_3^Q(y(\t)+u^{\text{in}},z(\t)+v^{\text{in}}) \right]d\t.
\end{align}

For $T^\ast>0$ which will be chosen later, consider the Banach space
$$C_0([0,T^*];(\dot H^s)^2):=\{(y,z)\in C([0,T^*];(\dot H^s)^2): (y(0),z(0))=0 \}.$$
We aim to show that the nonlinear operator $\mathcal F$ maps the ball of radius $A$, which is,
\begin{align} \label{ball}
\{(y,z)\in C_0([0,T^*];(\dot H^s)^2): \norm{(y,z)}_{C([0,T^*];(\dot H^s)^2)}\leq A \},
\end{align}
into itself, and it is a contraction map provided that $T^*$ is sufficiently small.

Let $(y,z)$ and $(\tilde y,\tilde z)$ be in the ball (\ref{ball}), then by (\ref{est-til-R3}), (\ref{est-til-R3'}), (\ref{est-low-B1'}), (\ref{est-low-B1''}), (\ref{est-til-B2}), (\ref{est-til-B2'}), and the definition (\ref{def-F}) of $\mathcal F$, we find
\begin{align*}
&\norm{\mathcal F(y,z)(t)}_{(\dot H^s)^2}\leq C(s)\frac{1}{N}  \left(A^2+\norm{(u^{\text{in}},v^{\text{in}})}_{(\dot H^s)^2}^2\right) \notag\\
&+T^*\left[C(s,N)\left(A^2+\norm{(u^{\text{in}},v^{\text{in}})}_{(\dot H^0)^2}^2\right)+C(s)\left(A^3+\norm{(u^{\text{in}},v^{\text{in}})}_{(\dot H^s)^2}^3  \right) \right].
\end{align*}
Notice that the left-hand side, i.e., $\norm{\mathcal F(y,z)(t)}_{(\dot H^s)^2}$ is independent of $N$.
Moreover, we also have
\begin{align*}
&\norm{\mathcal F(y,z)(t)-\mathcal F(\tilde y,\tilde z)(t)}_{(\dot H^s)^2}    \notag\\
&\leq  \norm{(y,z)(t)-(\tilde y,\tilde z)(t)}_{(\dot H^s)^2}\Big\{
C(s)\frac{1}{N}  \left(A+\norm{(u^{\text{in}},v^{\text{in}})}_{(\dot H^s)^2}\right) \notag\\
&+T^*\left[C(s,N)\left(A+\norm{(u^{\text{in}},v^{\text{in}})}_{(\dot H^0)^2}\right)+C(s)\left(A^2+\norm{(u^{\text{in}},v^{\text{in}})}_{(\dot H^s)^2}^2  \right) \right] \Big\},
\end{align*}
for all $t\in [0,T^*]$. We observe once again that the left-hand side of the above inequality does not depend on $N$.
Therefore, for any $A>0$, we can choose $N$ sufficiently large and $T^*$ small enough so that
$\norm{\mathcal F(y,z)(t)}_{(\dot H^s)^2} \leq A$, $\norm{\mathcal F(\tilde y,\tilde z)(t)}_{(\dot H^s)^2} \leq A$,
and $$\norm{\mathcal F(y,z)(t)-\mathcal F(\tilde y,\tilde z)(t)}_{(\dot H^s)^2} \leq \frac{1}{2}\norm{(y,z)(t)-(\tilde y,\tilde z)(t)}_{(\dot H^s)^2},$$
for all $t\in [0,T^*]$, where $T^*$ depends on $A$ and $\norm{(u^{\text{in}},v^{\text{in}})}_{(\dot H^s)^2}$. By the Banach's Fixed Point Theorem, there exists a unique solution $(y,z)$ of (\ref{fix-eqn}) on $[0,T^*]$ in the ball (\ref{ball}), which immediately implies the local existence and uniqueness for the integrated modified first form (\ref{m-int-1st}) in the space $C([0,T^*];(\dot H^s)^2)$, for $s>1/2$.

It can be shown, by elementary analysis, that any solution of the original cKdV (\ref{eq:9}) in the sense of Definition \ref{dfn-solution} also satisfies the integrated modified first form (\ref{m-int-1st}). Therefore, the uniqueness of solutions to (\ref{m-int-1st}) on $[0,T^*]$ implies the uniqueness for (\ref{eq:9}) on $[0,T^*]$. By extension, the global solution constructed in Section \ref{S-4} is the unique solution of (\ref{eq:9}), and it is in the space $C([0,T];(\dot H^s)^2)$, for any $T>0$, and $s>1/2$.

It remains to prove the continuous dependence on initial data. Let $T>0$ be given. We take two different solutions $(u,v)$ and $(\tilde u,\tilde v)$ evolving from two initial points $(u^{\text{in}},v^{\text{in}})$ and $(\tilde u^{\text{in}},\tilde v^{\text{in}})$. Thus, by (\ref{m-int-1st})
\begin{align} \label{dependence}
\vect{u-\tilde u}{v-\tilde v}&(t)
=\vect{u^{\text{in}}-\tilde u^{\text{in}}}{v^{\text{in}}-\tilde v^{\text{in}}} +\ve B_2^Q(u(t),v(t))-\ve B_2^Q(\tilde u(t),\tilde v(t)) \notag\\
&\hspace{1 in }-(\ve B_2^Q(u^{\text{in}},v^{\text{in}})- \ve B_2^Q(\tilde u^{\text{in}},\tilde v^{\text{in}}))\notag\\
&+\int_0^t \left[\ve B_1^P(u,v)-\ve B_1^P(\tilde u,\tilde v)+\ve R_3^Q(u,v)-\ve R_3^Q(\tilde u,\tilde v) \right]d\t.
\end{align}
Due to (\ref{gal31}), which has been proved in Section \ref{S-4}, there exists $M>0$ such that
\begin{align*}
\norm{(u,v)}_{L^{\infty}([0,T];(\dot H^s)^2)} \text{\;\;and\;\;} \norm{(\tilde u,\tilde v)}_{L^{\infty}([0,T];(\dot H^s)^2)}\leq M,
\end{align*}
where $M$ depends on $T$, $s$, $\max\left\{\norm{(u^{\text{in}},v^{\text{in}})}_{(\dot H^0)^2}, \norm{(\tilde u^{\text{in}},\tilde v^{\text{in}})}_{(\dot H^0)^2}\right\}$. Thus,
by taking the $(\dot H^s)^2$-norm on both sides of (\ref{dependence}), and using (\ref{est-til-R3'}), (\ref{est-low-B1''}) and (\ref{est-til-B2'}), we deduce for $t\in [0,T^*]$,
\begin{align*}
&\norm{(u,v)-(\tilde u,\tilde v)}_{L^{\infty}([0,T^*];(\dot H^s)^2)} \leq \norm{(u^{\text{in}},v^{\text{in}})-(\tilde u^{\text{in}},\tilde v^{\text{in}})}_{(\dot H^s)^2} \notag\\
&+C(s)\frac{1}{N} M(\norm{(u,v)-(\tilde u,\tilde v)}_{L^{\infty}([0,T^*];(\dot H^s)^2)}+
\norm{(u^{\text{in}},v^{\text{in}})-(\tilde u^{\text{in}},\tilde v^{\text{in}})}_{(\dot H^s)^2}) \notag\\
&+C(s,N)T^*\norm{(u,v)-(\tilde u,\tilde v)}_{L^{\infty}([0,T^*];(\dot H^s)^2)}(M+M^2).
\end{align*}
Therefore, if we choose $N$ large enough such that $C(s)\frac{1}{N}M\leq \frac{1}{3}$, and $T^*$ sufficient small, such that $C(s,N)T^*(M+M^2)\leq \frac{1}{3}$, then
\begin{align*}
\norm{(u,v)-(\tilde u,\tilde v)}_{L^{\infty}([0,T^*];(\dot H^s)^2)}\leq 4\norm{(u^{\text{in}},v^{\text{in}})-(\tilde u^{\text{in}},\tilde v^{\text{in}})}_{(\dot H^s)^2}.
\end{align*}
By iterating the above procedure $[T/T^*]+1$ times, we obtain
\begin{align*}
\norm{(u,v)-(\tilde u,\tilde v)}_{L^{\infty}([0,T];(\dot H^s)^2)} \leq 4^{[T/T^*]+1}\norm{(u^{\text{in}},v^{\text{in}})-(\tilde u^{\text{in}},\tilde v^{\text{in}})}_{(\dot H^s)^2},
\end{align*}
where $T^*$ depends on $T$, $s$, and $\max\left\{\norm{(u^{\text{in}},v^{\text{in}})}_{(\dot H^0)^2}, \norm{(\tilde u^{\text{in}},\tilde v^{\text{in}})}_{(\dot H^0)^2}\right\}$.
\end{proof}

\smallskip

\section{Uniqueness for $s\in [0,1/2]$} \label{irregular}

 Notice that the mapping property (\ref{est-til-R3}) of $\ve R_3^Q$ holds for $s>1/2$ only. In order to prove the uniqueness for the case $s\in [0,1/2]$, we shall perform integration by parts procedure to $\ve R_3^Q(u,v)$ to obtain operators with nicer mapping properties in $\dot H^s$, for $s\in [0,1/2]$. On the other hand, for the purpose of constructing a contraction mapping, our strategy is similar to the one used in the previous section, that is, decomposing $\ve R_3^Q(u,v)$ appropriately according to high and low Fourier modes so as to take advantage of the time-averaging induced squeezing.

Recall, all terms in $\ve R_3^Q(u,v)_k$ are in the form of (\ref{u-3}). As in Section \ref{sec-dbp}, we single out the resonant terms (i.e. when $(k_1+k_2)(k_2+k_3)(k_3+k_1)=0$) in $\ve R_3^Q(u,v)_k$ by splitting
\begin{align} \label{split-1}
\ve R_3^Q(u,v)=\ve R_{\text{3res}}^Q(u,v)+\ve R_{\text{3nres}}^Q(u,v).
\end{align}

It is easy to see that the resonance $\ve R_{\text{3res}}^Q(u,v)$ has the same mapping property as $\ve R_{\text{3res}}(u,v)$, that is, for $s\geq 0$,
\begin{align} \label{est-R3res}
\norm{\ve R_{\text{3res}}^Q(u,v)}_{(\dot H^s)^2}\leq C\norm{(u,v)}_{(\dot H^0)^2}^2\norm{(u,v)}_{(\dot H^s)^2},
\end{align}
and
\begin{align} \label{est-R3res'}
&\norm{\ve R_{\text{3res}}^Q(u,v)- \ve R_{\text{3res}}^Q(\tilde u,\tilde v) }_{(\dot H^s)^2} \notag\\
&\leq C\norm{(u,v)-(\tilde u,\tilde v)}_{(\dot H^s)^2}
\left(\norm{(u,v)}_{(\dot H^s)^2}^2+\norm{(\tilde u,\tilde v)}_{(\dot H^s)^2}^2\right).
\end{align}

Next, we decompose $\ve R_{\text{3nres}}^Q(u,v)$ by appropriately splitting the Fourier modes of $u$ and $v$ into high and low modes. By (\ref{u-3}), all terms in
$\ve R_{\text{3nres}}^Q(u,v)_k$ can be expressed in the structure
\begin{align} \label{u-4}
\pm \frac{i}{12} \sum_{k_1+k_2+k_3=k \atop \{k_1,k_2,k_3\}\in \mathcal D}^{\text{nonres}} \frac{e^{3i(k_1+k_2)(k_2+k_3)(k_3+k_1)t}}{k_1} \phi_{k_1} \psi_{k_2} \xi_{k_3}, \;\;k\in \ZZ_0,
\end{align}
where $\mathcal D\subset \ZZ_0^3$, and each of $\phi$, $\psi$, $\xi$ is either $u$ or $v$.
Since the explicit structure of $\mathcal D$ is irrelevant to the following argument, we see that (\ref{u-4}) is essentially the same as
$R_{\text{3nres}}(\phi,\psi,\xi)_k$ defined in (\ref{def-nres}), which can be split into two parts
by adopting the idea in \cite{B-I-T}:
\begin{align} \label{decompose}
R_{\text{3nres}}(\phi,\psi,\xi)_k=R_{\text{3nres0}}(\phi,\psi,\xi)_k+R_{\text{3nres1}}(\phi,\psi,\xi)_k
\end{align}
where
\begin{align} \label{R3nres0}
R_{\text{3nres0}}(\phi,\psi,\xi)_k:&=R_{\text{3nres}}(\phi,\mathcal Q\psi,\mathcal Q\xi)_k \notag\\
&=\sum_{k_1+k_2+k_3=k}^{\text{nonres}} \frac{e^{3i(k_1+k_2)(k_2+k_3)(k_3+k_1)t}}{k_1} \phi_{k_1} \mathcal Q\psi_{k_2} \mathcal Q\xi_{k_3}
\end{align}
and
\begin{align} \label{R3nres1}
&R_{\text{3nres1}}(\phi,\psi,\xi)_k \notag\\ &:=R_{\text{3nres}}(\phi,\psi,\xi)_k-R_{\text{3nres}}(\phi,\mathcal Q\psi,\mathcal Q\xi)_k \notag\\
&=R_{\text{3nres}}(\phi,\mathcal P\psi,\xi)_k+R_{\text{3nres}}(\phi,\mathcal Q\psi,\mathcal P\xi)_k \notag\\
&=\sum_{k_1+k_2+k_3=k}^{\text{nonres}} \frac{e^{3i(k_1+k_2)(k_2+k_3)(k_3+k_1)t}}{k_1}
(\phi_{k_1} \mathcal P\psi_{k_2} \xi_{k_3}+\phi_{k_1} \mathcal Q\psi_{k_2} \mathcal P\xi_{k_3}).
\end{align}
It has been remarked in \cite{B-I-T} that, $R_{\text{3nres0}}(\phi,\psi,\xi)_k$ has only one smoothed factor $\frac{\phi_{k_1}}{k_1}$.
However, every term in $R_{\text{3nres1}}(\phi,\psi,\xi)_k$ has two smoothed factors: $\frac{\phi_{k_1}}{k_1}$ and either $\mathcal P\psi_{k_2}$ or $\mathcal P\xi_{k_3}$.
The following mapping property of $R_{\text{3nres1}}(\phi,\psi,\xi)$ is a special case of
Lemma \ref{L:R3nres1}:
\begin{align} \label{u-5}
\norm{R_{\text{3nres1}}(\phi,\psi,\xi)}_{\dot H^s}
\leq C N^{s+1}\norm{\phi}_{\dot H^0}\norm{\psi}_{\dot H^0}\norm{\xi}_{\dot H^0}+C N \norm{\phi}_{\dot H^0}\norm{\psi}_{\dot H^0 }\norm{\xi}_{\dot H^s},
\end{align}
for $s\in [0,1]$.

We can decompose every term in $\ve R_{\text{3nres}}^Q(u,v)_k$ as (\ref{decompose}), and it follows that
\begin{align} \label{split-2}
\ve R_{\text{3nres}}^Q(u,v)_k=\ve R_{\text{3nres0}}^Q(u,v)_k+\ve R_{\text{3nres1}}^Q(u,v)_k,
\end{align}
where all terms in $\ve R_{\text{3nres0}}^Q(u,v)_k$ are in the form
\begin{align} \label{u-6}
\pm \frac{i}{12} \sum_{k_1+k_2+k_3=k \atop \{k_1,k_2,k_3\}\in \mathcal D}^{\text{nonres}} \frac{e^{3i(k_1+k_2)(k_2+k_3)(k_3+k_1)t}}{k_1} \phi_{k_1} \mathcal Q\psi_{k_2} \mathcal Q\xi_{k_3}, \;\;k\in \ZZ_0,
\end{align}
while all terms in $\ve R_{\text{3nres1}}^Q(u,v)_k$ have the structure
\begin{align*}
\pm \frac{i}{12} \sum_{k_1+k_2+k_3=k \atop \{k_1,k_2,k_3\}\in \mathcal D}^{\text{nonres}} \frac{e^{3i(k_1+k_2)(k_2+k_3)(k_3+k_1)t}}{k_1} (\phi_{k_1} \mathcal P\psi_{k_2} \xi_{k_3}+\phi_{k_1} \mathcal Q\psi_{k_2} \mathcal P\xi_{k_3}), \;\;k\in \ZZ_0,
\end{align*}
where $\mathcal D\subset \ZZ_0^3$, and $\phi$, $\psi$, $\xi$ is either $u$ or $v$. By (\ref{u-5}) we infer, for $0\leq s \leq 1$,
\begin{align} \label{u-17}
\norm{\ve R_{\text{3nres1}}^Q(u,v)}_{(\dot H^s)^2}\leq C(N,s)
\norm{(u,v)}_{(\dot H^s)^2} \norm{(u,v)}^2_{(\dot H^0)^2},
\end{align}
and
\begin{align} \label{u-17'}
&\norm{\ve R_{\text{3nres1}}^Q(u,v)-\ve R_{\text{3nres1}}^Q(\tilde u,\tilde v)}_{(\dot H^s)^2} \notag\\
&\leq C(N,s) \norm{(u,v)-(\tilde u,\tilde v)}_{(\dot H^s)^2}
\left(\norm{(u,v)}_{(\dot H^s)^2}^2+\norm{(\tilde u,\tilde v)}_{(\dot H^s)^2}^2\right),
\end{align}
where $C(N,s)\rightarrow \infty$ as $N\rightarrow \infty$.

Now, we apply the differentiation by parts to $\ve R_{\text{3nres0}}(u,v)$. Note all terms in $\ve R_{\text{3nres0}}(u,v)_k$ are in the form (\ref{u-6}), and we can take the following term as an example:
\begin{align} \label{u-7}
-\frac{i}{12} \sum_{k_1+k_2+k_3=k \atop \{k_1,k_2,k_3\}\in \mathcal D}^{\text{nonres}} \frac{e^{3i(k_1+k_2)(k_2+k_3)(k_3+k_1)t}}{k_1} u_{k_1} \mathcal Qu_{k_2} \mathcal Qv_{k_3},
\end{align}
where $\mathcal D=\{\{k_1,k_2,k_3\} \in \ZZ_0^3: |k_1|\leq N, \; |k_2+k_3|> N\}$, which is corresponding to the first term of $f_k$ defined in (\ref{u-2}). If we ignore the explicit structure of the set $\mathcal D$, which is irrelevant to our following argument, then (\ref{u-7}) is essentially the same as $R_{\text{3nres0}}(u,u,v)_k$, defined in (\ref{R3nres0}), to which we carry out the differentiation by parts:
\begin{align} \label{u-8}
&R_{\text{3nres0}}(u,u,v)_k \notag\\
&:=\sum_{k_1+k_2+k_3=k}^{\text{nonres}} \frac{e^{3i(k_1+k_2)(k_2+k_3)(k_3+k_1)t}}{k_1} u_{k_1} \mathcal Q u_{k_2} \mathcal Q v_{k_3} \notag\\
&=\frac{1}{3i}\left(\partial_t B_{30}(u,u,v)_k-g_k\right),
\end{align}
where
\begin{equation}
  \label{B30}
  B_{30}(\phi,\psi,\xi)_k:=
\sum_{k_1+k_2+k_3=k}^{\text{nonres}}
\frac{e^{3i(k_1+k_2)(k_2+k_3)(k_1+k_3)t}}{k_1(k_1+k_2)(k_2+k_3)(k_1+k_3)}
\phi_{k_1}\mathcal Q\psi_{k_2} \mathcal Q \xi_{k_3} \:,
\end{equation}
and
\begin{align}  \label{g-k}
g_k:=\sum_{k_1+k_2+k_3=k}^{\text{nonres}} \frac{e^{3i(k_1+k_2)(k_2+k_3)(k_3+k_1)t}}{k_1(k_1+k_2)(k_2+k_3)(k_3+k_1)}
\partial_t (u_{k_1} \mathcal Qu_{k_2} \mathcal Qv_{k_3}).
\end{align}

An analogue to (\ref{u-8}), one can complete the differentiation by parts procedure to all terms in $\ve R_{\text{3nres0}}(u,v)$. Hence
\begin{align} \label{2-dif-part}
\ve R_{\text{3nres0}}(u,v)_k=\partial_t \ve B_{30}(u,v)_k+\ve B_{40}(u,v)_k.
\end{align}
For the sake of conciseness, we do no provide the exact formulas of $\ve B_{30}(u,v)$ and $\ve B_{40}(u,v)$. But notice that, $\frac{1}{3i}B_{30}(u, u, v)_k$ is a typical term in $\ve B_{30}(u,v)_k$, so by virtue of Lemma \ref{L:B30} one has, for $0\leq s\leq 1$,
\begin{align}  \label{u-13}
\norm{\ve B_{30}(u,v)}_{(\dot H^s)^2}\leq \gamma(N,s)\norm{(u,v)}_{(\dot H^s)^2}^3,
\end{align}
and
\begin{align}  \label{u-13'}
&\norm{\ve B_{30}(u,v)-\ve B_{30}(\tilde u,\tilde v)}_{(\dot H^s)^2} \notag\\
&\leq  \gamma(N,s)  \norm{(u,v)-(\tilde u,\tilde v)}_{(\dot H^s)^2}
\left(\norm{(u,v)}_{(\dot H^s)^2}^2+\norm{(\tilde u,\tilde v)}_{(\dot H^s)^2}^2\right),
\end{align}
where $\gamma(N,s)\rightarrow 0$ as $N\rightarrow \infty$.
In addition, $g_k$ defined in (\ref{g-k}) is a typical term in $\ve B_{40}(u,v)_k$.
Clearly, $g_k$ can be treated in the same way as (\ref{star}), and generates terms in structures of $B_4^1(\phi,\psi,\xi,\eta)$ or $B_4^2(\phi,\psi,\xi,\eta)$. By means of the mapping property of $B_4$ provided in Lemma \ref{L:B4}, one has, for $s\geq 0$,
\begin{align}  \label{u-14}
\norm{\ve B_{40}(u,v)}_{(\dot H^s)^2}\leq C(s) \norm{(u,v)}_{(\dot H^s)^2}^4,
\end{align}
and
\begin{align}  \label{u-14'}
&\norm{\ve B_{40}(u,v)-\ve B_{40}(\tilde u,\tilde v)}_{(\dot H^s)^2} \notag\\
&\leq C(s) \norm{(u,v)-(\tilde u,\tilde v)}_{(\dot H^s)^2}
\left(\norm{(u,v)}_{(\dot H^s)^2}^3+\norm{(\tilde u,\tilde v)}_{(\dot H^s)^2}^3\right).
\end{align}

By virtue of (\ref{split-1}), (\ref{split-2}) and (\ref{2-dif-part}), we can write $\ve R_3^Q(u,v)$ as
\begin{align} \label{u-15}
\ve R_3^Q(u,v)&=\ve R_{\text{3res}}^Q(u,v)+\ve R_{\text{3nres}}^Q(u,v) \notag\\
&=\ve R_{\text{3res}}^Q(u,v)+\ve R_{\text{3nres1}}^Q(u,v)+\ve R_{\text{3nres0}}^Q(u,v) \notag\\
&=\ve R_{\text{3res}}^Q(u,v)+\ve R_{\text{3nres1}}^Q(u,v)+\partial_t \ve B_{30}(u,v)+\ve B_{40}(u,v).
\end{align}
Substituting (\ref{u-15}) into the modified first form (\ref{m-1st-form}), we obtain the following \emph{modified second form of the cKdV}:
\begin{align} \label{m-2nd-form}
&\partial_t \left[\vect{u}{v}-\ve B_2^Q(u,v)-\ve B_{30}(u,v) \right]  \notag\\
&=\ve B_1^P(u,v)+ \ve R_{\text{3res}}^Q(u,v)+\ve R_{\text{3nres1}}^Q(u,v)+\ve B_{40}(u,v).
\end{align}

 We now use this form of the cKdV to prove the uniqueness of global solutions and continuous dependence on initial data in the space $(\dot H^s)^2$ for $s\in (0,1/2]$.

\begin{proof}
The integrated form of (\ref{m-2nd-form}) reads
\begin{align}  \label{u-16}
\vect{u}{v}(t)-&\vect{u}{v}(0)=\left[\ve B_2^Q(u,v)+\ve B_{30}(u,v)\right](t)
-\left[\ve B_2^Q(u,v)+\ve B_{30}(u,v)\right](0) \notag\\
&+\int_0^t  \left[\ve B_1^P(u,v)+ \ve R_{\text{3res}}^Q(u,v)+\ve R_{\text{3nres1}}^Q(u,v)+\ve B_{40}(u,v)\right] (\t) d\t.
\end{align}
Let $(y(t),z(t)):=(u(t),v(t))-(u^{\text{in}},v^{\text{in}})$. Using the new variables $y$ and $z$, (\ref{u-16}) can be written as a fixed point equation
\begin{align} \label{fix-eqn-2}
(y,z)=\mathscr F(y,z)
\end{align}
where
\begin{align} \label{def-F2}
\mathscr F&(y,z)(t)=\ve B_2^Q(y(t)+u^{\text{in}},z(t)+v^{\text{in}})+\ve B_{30}(y(t)+u^{\text{in}},z(t)+v^{\text{in}}) \notag\\
&\hspace{1 in}-\ve B_2^Q(u^{\text{in}},v^{\text{in}})-\ve B_{30}(u^{\text{in}},v^{\text{in}})\notag\\
&+\int_0^t \Big[\ve B_1^P(y(\t)+u^{\text{in}},z(\t)+v^{\text{in}})+ \ve R_{\text{3res}}^Q(y(\t)+u^{\text{in}},z(\t)+v^{\text{in}}) \notag\\
&\hspace{0.5 in}+\ve R_{\text{3nres1}}^Q(y(\t)+u^{\text{in}},z(\t)+v^{\text{in}})+\ve B_{40}(y(\t)+u^{\text{in}},z(\t)+v^{\text{in}})\Big] d\t.
\end{align}

We intend to show $\mathscr F$ maps the ball of radius $A$, which is,
\begin{align} \label{ball-1}
\{(y,z)\in C_0([0,T^*];(\dot H^s)^2): \norm{(y,z)}_{C([0,T^*];(\dot H^s)^2)}\leq A \},
\end{align}
for $s\in [0,1/2]$, into itself and is a contraction map provided that $T^*$ is sufficiently small. To see this,
we let $(y,z)$ and $(\tilde y,\tilde z)$ be in the ball (\ref{ball-1}). Then, due to the mapping properties (\ref{est-low-B1'}), (\ref{est-low-B1''}), (\ref{est-til-B2}), (\ref{est-til-B2'}),  (\ref{est-R3res}), (\ref{est-R3res'}), (\ref{u-17}), (\ref{u-17'}), (\ref{u-13}), (\ref{u-13'}),
(\ref{u-14}), (\ref{u-14'}) and the definition (\ref{def-F2}) of $\mathscr F$, we deduce that, for $s\in [0,1/2]$,
\begin{align*}
&\norm{\mathscr F(y,z)(t)}_{(\dot H^s)^2}\leq \frac{C(s)}{N}\left(A^2+\norm{(u^{\text{in}},v^{\text{in}})}_{(\dot H^s)^2}^2\right)+\gamma(N,s)\left(A^3+\norm{(u^{\text{in}},v^{\text{in}})}_{(\dot H^s)^2}^3\right)\notag\\
&+T^*\Big[C(s,N)\left(A^2+\norm{(u^{\text{in}},v^{\text{in}})}_{(\dot H^s)^2}^2\right)
+C\left(A^3+\norm{(u^{\text{in}},v^{\text{in}})}_{(\dot H^s)^2}^3\right) \notag\\
&\hspace{0.4 in}+C(N,s)\left(A^3+\norm{(u^{\text{in}},v^{\text{in}})}_{(\dot H^s)^2}^3\right)
+C(s)\left(A^4+\norm{(u^{\text{in}},v^{\text{in}})}_{(\dot H^s)^2}^4\right)\Big].
\end{align*}
One observes that the left-hand side, i.e., $\norm{\mathscr F(y,z)(t)}_{(\dot H^s)^2}$ is independent of $N$. In addition,
\begin{align*}
&\norm{\mathscr F(y,z)(t)-\mathscr F(\tilde y,\tilde z)(t)}_{(\dot H^s)^2}
\leq  \norm{(y,z)(t)-(\tilde y,\tilde z)(t)}_{(\dot H^s)^2}                 \notag\\
&\times \Big\{\frac{C(s)}{N}\left(A+\norm{(u^{\text{in}},v^{\text{in}})}_{(\dot H^s)^2}\right)+\gamma(N,s)\left(A^2+\norm{(u^{\text{in}},v^{\text{in}})}_{(\dot H^s)^2}^2\right)\notag\\
&\hspace{0.2 in}+T^*\Big[C(s,N)\left(A+\norm{(u^{\text{in}},v^{\text{in}})}_{(\dot H^s)^2}\right)
+C\left(A^2+\norm{(u^{\text{in}},v^{\text{in}})}_{(\dot H^s)^2}^2\right) \notag\\
&\hspace{0.4 in}+C(N,s)\left(A^2+\norm{(u^{\text{in}},v^{\text{in}})}_{(\dot H^s)^2}^2\right)
+C(s)\left(A^3+\norm{(u^{\text{in}},v^{\text{in}})}_{(\dot H^s)^2}^3\right)\Big]\Big\},
\end{align*}
where $\gamma(N,s)\rightarrow 0$ as $N\rightarrow \infty$. We observe once again that the left-hand side of the above inequality is independent of $N$.
Thus, for any $A>0$, we can choose $N$ sufficiently large, and $T^*$ small enough, so that
$\norm{\mathscr F(y,z)(t)}_{(\dot H^s)^2} \leq A$, $\norm{\mathscr F(\tilde y,\tilde z)(t)}_{(\dot H^s)^2} \leq A$,
and $$\norm{\mathscr F(y,z)(t)-\mathscr F(\tilde y,\tilde z)(t)}_{(\dot H^s)^2} \leq \frac{1}{2}\norm{(y,z)(t)-(\tilde y,\tilde z)(t)}_{(\dot H^s)^2},$$
for all $t\in [0,T^*]$, where $T^*$ depends on $A$ and $\norm{(u^{\text{in}},v^{\text{in}})}_{(\dot H^s)^2}$. By the Banach's Fixed Point Theorem, there exists a unique solution $(y,z)$ of (\ref{fix-eqn-2}) on $[0,T^*]$ in the ball (\ref{ball-1}), which yields the short-time existence and uniqueness of the solution $(u,v)$ to (\ref{u-16}) in the space $(\dot H^s)^2$ for $s\in [0,1/2]$.

It can be shown that any solution of the original cKdV (\ref{eq:9}) in the sense of Definition \ref{dfn-solution} also satisfies the integrated modified second form (\ref{u-16}).
Also, recall in Section \ref{S-4}, we have already proved the global existence of solutions for the original cKdV (\ref{eq:9}) for $s\in (0,1/2]$. Therefore, according to the uniqueness result proved above and using some extension argument, we conclude that the global solution of (\ref{eq:9}) is unique, and it is in the space $C([0,T];(\dot H^s)^2)$ for any $T>0$, and $s\in (0,1/2]$.

Finally, similar to the proof in Section \ref{regular}, we can also show the continuous dependence on the initial data for the case $s\in (0,1/2]$.
\end{proof}

\begin{remark}\label{6-3r}
Notice from the above that the contraction mapping argument is valid for $s=0$. Hence,
provided there is a solution of (\ref{eq:9}) for the case $s=0$, we also obtain the uniqueness and continuous dependence on initial data for the equation (\ref{eq:9}) if $s=0$.
\end{remark}

It remains to show the existence of a solution to (\ref{eq:9}) for the case $s=0$. This will be done by using density arguments.

\begin{proof}
We approximate the initial data $(u^{\text{in}},v^{\text{in}})\in (\dot H^0)^2$ by a sequence of smoother functions $(u_j^{\text{in}},v_j^{\text{in}})\in (\dot H^{s})^2$, where $s>0$. Let us fix an arbitrary $T>0$.
We have already shown that, for each $j$, there exists a unique solution
$(u_j(t),v_j(t)) \in C([0,T];(\dot H^0)^2)$ such that
$(u_j(0),v_j(0))=(u_j^{\text{in}},v_j^{\text{in}})$, and the quantity $\mathcal E(u_j(t),v_j(t))$, defined in (\ref{conserve}), is conserved.
By Remark \ref{6-3r}, we infer
\begin{equation}
  \label{eq:59}
  \norm{(u_j,v_j)-(u_{\ell},v_{\ell})}_{L^\infty([0,T],(\dot H^0)^2)}
  \leq  L\norm{(u_j^{\text{in}},v_j^{\text{in}})-(u_{\ell}^{\text{in}},v_{\ell}^{\text{in}})}_{(\dot H^0)^2} \:,
\end{equation}
	where $L$ depends on $T$ and $\norm{(u^{\text{in}},v^{\text{in}})}_{(\dot H^0)^2}$.
Since $(u_j^{\text{in}},v_j^{\text{in}})$ is a Cauchy sequence in
	$(\dot H^0)^2$, we deduce from (\ref{eq:59}) that
	$(u_j,v_j)$ is a Cauchy sequence in $C([0,T];(\dot
	H^0)^2)$, whereas we denote the limit as
	$(u,v)$. Now, using the mapping properties of
	$B_1$, one can pass to the limit in equation~(\ref{eq:11}) similar as in Section \ref{S-4} (where we proved the existence for $s>0$), and deduce that $(u,v)$ also
	satisfies~(\ref{eq:11}) and conserves $\mathcal E(u(t),v(t))$, defined in (\ref{conserve}), for all $t\in [0,T]$. Finally, by Remark \ref{6-3r} again, we obtain the desired uniqueness
    and continuous dependence on initial data under the $(\dot H^0)^2$ norm.
\end{proof}

\smallskip

\section{Appendix: relevant estimates}\label{appendix}

	In this section we collect all relevant estimates for the
	nonlinear operators entering in our equations. The notations are
	not, or just slightly, different from the ones used
	in~\cite{B-I-T}, where all proofs can be found.

\begin{lemma}\label{L:B1}
	Let $\theta>3/2$. Then the bilinear operator $B_{1}$ defined
	in (\ref{eq:10a}) maps $\dot{H}^{0}\times\dot{H}^{0}$ into
	$\dot{H}^{-\theta}$ and satisfies the estimate
\begin{equation*}
    \|B_{1}(\phi,\psi)\|_{\dot{H}^{-\theta}}\leq C(\theta)
    \|\phi\|_{\dot{H}^{0}}\|\psi\|_{\dot{H}^{0}}.
\end{equation*}
\end{lemma}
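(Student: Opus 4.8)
The plan is to reduce everything to a pointwise bound on the Fourier coefficients $B_1(\phi,\psi)_k$ followed by a weighted $\ell^2$ summation. The starting observation is that the oscillatory factor is harmless at the level of absolute values, since $|e^{3ikk_1k_2 t}|=1$. Hence, directly from the definition (\ref{eq:10a}),
\[
|B_1(\phi,\psi)_k| \leq \frac{1}{2}|k| \sum_{k_1+k_2=k} |\phi_{k_1}|\,|\psi_{k_2}|,
\]
so no cancellation from the phase is exploited here; this is consistent with $B_1$ being the ``roughest'' operator in the hierarchy, the one that loses a full derivative.

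First I would identify the inner sum, for fixed $k$, as the convolution $(\phi\ast\psi)_k=\sum_{k_1}\phi_{k_1}\psi_{k-k_1}$, and apply the Cauchy--Schwarz inequality in the summation variable $k_1$ to obtain the uniform (in $k$) bound
\[
\left|\sum_{k_1+k_2=k}\phi_{k_1}\psi_{k_2}\right| \leq \|\phi\|_{\dot H^0}\|\psi\|_{\dot H^0}.
\]
This step uses nothing beyond the embedding $\ell^2 \ast \ell^2 \hookrightarrow \ell^\infty$, together with the fact that $\|\cdot\|_{\dot H^0}$ is exactly the $\ell^2$ norm of the Fourier coefficients.

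Next I would insert this uniform bound into the definition of the homogeneous norm and separate the $k$-dependence:
\[
\|B_1(\phi,\psi)\|_{\dot H^{-\theta}}^2 = \sum_{k\in\ZZ_0}|k|^{-2\theta}|B_1(\phi,\psi)_k|^2 \leq \frac{1}{4}\|\phi\|_{\dot H^0}^2\|\psi\|_{\dot H^0}^2 \sum_{k\in\ZZ_0}|k|^{2-2\theta}.
\]
The only genuinely analytic point is the convergence of the final series, which holds precisely when $2-2\theta<-1$, i.e.\ $\theta>3/2$; this is exactly where the hypothesis enters, and it fixes the admissible constant as $C(\theta)=\tfrac{1}{2}\big(\sum_{k\in\ZZ_0}|k|^{2-2\theta}\big)^{1/2}$.

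The main (and essentially the only) obstacle is the bookkeeping of the derivative loss: the factor $|k|$ produced by $B_1$ contributes $|k|^2$ inside the sum, which must be absorbed by the negative weight $|k|^{-2\theta}$. The threshold $\theta>3/2$ is precisely what renders the residual power $|k|^{2-2\theta}$ summable over $\ZZ_0$, so there is no need for any smoothing coming from time averaging at this stage. Everything else is a routine application of Cauchy--Schwarz and the definition of $\dot H^{-\theta}$.
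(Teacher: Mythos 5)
Your proof is correct: the triangle inequality kills the unimodular phase, Cauchy--Schwarz gives the uniform $\ell^\infty$ bound on the convolution, and the hypothesis $\theta>3/2$ is used exactly where it must be, to make $\sum_{k\in\ZZ_0}|k|^{2-2\theta}$ converge. This is essentially the same elementary argument as the one the paper relies on (the proof is deferred to the reference \cite{B-I-T}, where the corresponding estimate for $B_1$ is established in just this way), so nothing further is needed.
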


\begin{lemma}\label{L:B2}
	Let $s>-1/2$. Then the bilinear operator $B_{2}$ defined in
	(\ref{eq:13a}) maps $\dot{H}^{s}\times \dot{H}^{s}$ into
	$\dot{H}^{s+1}$ and satisfies the estimate
\begin{equation*}
    \|B_{2}(\phi,\psi)\|_{\dot{H}^{s+1}}\leq C(s)
    \|\phi\|_{\dot{H}^{s}}\|\psi\|_{\dot{H}^{s}}.  \label{estB2}
\end{equation*}
\end{lemma}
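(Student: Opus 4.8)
The plan is to reduce the claimed bound to an elementary weighted convolution inequality on $\ell^2(\ZZ_0)$. Since $\abs{e^{3ikk_1k_2t}}=1$, the oscillatory phase is irrelevant here and may be discarded by the triangle inequality; indeed, from the definition (\ref{eq:13a}) of $B_2$ one has
\begin{equation*}
\abs{k}^{s+1}\abs{B_2(\phi,\psi)_k}\leq \frac16\sum_{k_1+k_2=k}\frac{\abs{k}^{s+1}}{\abs{k_1}\,\abs{k_2}}\abs{\phi_{k_1}}\abs{\psi_{k_2}} \:,
\end{equation*}
so that $\norm{B_2(\phi,\psi)}_{\dot H^{s+1}}^2=\sum_{k\in\ZZ_0}\abs{k}^{2(s+1)}\abs{B_2(\phi,\psi)_k}^2$ is controlled once we estimate the $\ell^2_k$-norm of the right-hand side. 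First I would substitute $U_k:=\abs{k}^s\abs{\phi_k}$ and $V_k:=\abs{k}^s\abs{\psi_k}$, so that $\norm{U}_{\ell^2}=\norm{\phi}_{\dot H^s}$ and $\norm{V}_{\ell^2}=\norm{\psi}_{\dot H^s}$, and rewrite the summand as $a(k_1,k_2)\,U_{k_1}V_{k_2}$ with the kernel
\begin{equation*}
a(k_1,k_2):=\frac{\abs{k_1+k_2}^{s+1}}{\abs{k_1}^{1+s}\,\abs{k_2}^{1+s}} \:.
\end{equation*}

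The key step is to reduce this two-variable kernel to a function of a single index. Using $\abs{k_1+k_2}\leq 2\max(\abs{k_1},\abs{k_2})$ together with $s+1>0$ (valid since $s>-1/2$), on the region $\{\abs{k_1}\geq \abs{k_2}\}$ I get $a(k_1,k_2)\leq 2^{s+1}\abs{k_2}^{-(1+s)}$, which depends on $k_2$ alone; on the complementary region $\{\abs{k_1}<\abs{k_2}\}$ the symmetric bound $a(k_1,k_2)\leq 2^{s+1}\abs{k_1}^{-(1+s)}$ holds. Splitting the inner sum accordingly and treating the first region as a discrete convolution, Young's inequality $\ell^1\ast\ell^2\to\ell^2$ followed by the Cauchy--Schwarz inequality gives
\begin{equation*}
\Big\|\sum_{\substack{k_1+k_2=k\\ \abs{k_1}\geq\abs{k_2}}} a(k_1,k_2)U_{k_1}V_{k_2}\Big\|_{\ell^2_k}
\leq 2^{s+1}\Big(\sum_{m\in\ZZ_0}\abs{m}^{-2(1+s)}\Big)^{1/2}\norm{V}_{\ell^2}\,\norm{U}_{\ell^2} \:,
\end{equation*}
and the second region is handled identically with the roles of $U$ and $V$ exchanged. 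Combining the two regions and recalling the meaning of $U$ and $V$ yields $\norm{B_2(\phi,\psi)}_{\dot H^{s+1}}\leq C(s)\norm{\phi}_{\dot H^s}\norm{\psi}_{\dot H^s}$.

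The only genuine constraint, and the point I would flag as the heart of the matter, is the convergence of the series $\sum_{m\in\ZZ_0}\abs{m}^{-2(1+s)}$, which holds precisely when $2(1+s)>1$, that is $s>-1/2$; this is exactly the hypothesis of the lemma, and it is sharp for this argument. The factor $1/(k_1k_2)$ built into $B_2$ by the differentiation by parts is what supplies the two extra powers that make this summable and simultaneously accounts for the gain of one derivative (from $\dot H^s$ to $\dot H^{s+1}$). Everything else—the max-splitting and the Young/Cauchy--Schwarz estimates—is routine.
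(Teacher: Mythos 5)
Your proof is correct, and you have identified precisely where the hypothesis $s>-1/2$ enters. For the comparison, note that this paper never actually proves Lemma \ref{L:B2}: the appendix explicitly defers all proofs of the collected estimates to \cite{B-I-T}. The closest in-paper benchmark is therefore the proof of Lemma \ref{lem-1}, which treats the related operator $\ve B_2^Q$ (the same kernel $1/(k_1k_2)$ with high/low-mode projections inserted). That proof runs by duality: pairing against $z\in\dot H^{-s}$, introducing weighted sequences $U,V,Z$ exactly as you do, bounding $|k_1+k_2|^s\le |2k_2|^s$ on the region where $|k_2|$ dominates, and closing with two applications of Cauchy--Schwarz, the summability requirement being $\sum_k |k|^{-(2-2\alpha)}<\infty$ with $\alpha<1/2$ --- the mirror image of your $\sum_m |m|^{-2(1+s)}<\infty$ with $s>-1/2$. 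Your direct route (triangle inequality to discard the unimodular phase, splitting according to which of $|k_1|,|k_2|$ is larger, then Young's $\ell^1\ast\ell^2\to\ell^2$ inequality followed by Cauchy--Schwarz) is the predual formulation of the same computation, so the two arguments are essentially interchangeable; yours has the virtue of being self-contained, of treating the two regions symmetrically, and of making transparent that $s>-1/2$ is exactly the threshold for this method.
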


\begin{lemma}\label{L:B21}
	Let $s+\alpha \geq 0$, $\alpha <3/4$, $s>-3/4$. Then the
	bilinear operator $B_{2}$ defined in~(\ref{eq:13a}) maps
	$\dot{H}^{s}\times \dot{H}^{s}$ into $\dot{H}^{s+\alpha }$ and
	satisfies the estimate
\begin{equation*}
    \|B_{2}(\phi,\psi)\|_{\dot{H}^{s+\alpha }}\leq C(s,\alpha) \|\phi\| _{\dot{H}^{s}}\|\psi\|_{\dot{H}^{s}}.
\end{equation*}
\end{lemma}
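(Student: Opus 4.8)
The plan is to prove the estimate directly at the level of Fourier coefficients, exploiting the two smoothing factors $1/k_1$ and $1/k_2$ in the definition~\eqref{eq:13a} of $B_2$; the oscillatory phase $e^{3ikk_1k_2t}$ has modulus one and may be discarded throughout. First I would pass to the weighted sequences $\Phi_{k}:=|k|^{s}|\phi_{k}|$ and $\Psi_{k}:=|k|^{s}|\psi_{k}|$, so that $\norm{\Phi}_{\ell^2}=\norm{\phi}_{\dot H^{s}}$ and $\norm{\Psi}_{\ell^2}=\norm{\psi}_{\dot H^{s}}$, and estimate
\[
|k|^{s+\alpha}\,|B_2(\phi,\psi)_k|\le\tfrac16\sum_{k_1+k_2=k}\frac{|k|^{s+\alpha}}{|k_1|^{1+s}|k_2|^{1+s}}\,\Phi_{k_1}\Psi_{k_2},\qquad k\in\ZZ_0 .
\]
The claim then reduces to showing that the right-hand side has $\ell^2_k$-norm bounded by $C(s,\alpha)\norm{\Phi}_{\ell^2}\norm{\Psi}_{\ell^2}$, i.e. to an $\ell^2$ bound for the bilinear kernel $|k|^{s+\alpha}|k_1|^{-(1+s)}|k_2|^{-(1+s)}$.

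The key step is to move the weight $|k|^{s+\alpha}$ onto the two frequencies $k_1,k_2$. Since $s+\alpha\ge0$, one has $|k|^{s+\alpha}=|k_1+k_2|^{s+\alpha}\le C\big(|k_1|^{s+\alpha}+|k_2|^{s+\alpha}\big)$, and by the symmetry $k_1\leftrightarrow k_2$ it suffices to estimate the single piece
\[
d_k:=\sum_{k_1+k_2=k}\frac{\Phi_{k_1}\Psi_{k_2}}{|k_1|^{1-\alpha}\,|k_2|^{1+s}} .
\]
Applying the Cauchy--Schwarz inequality in $k_1$ for each fixed $k$, and then summing in $k$, gives
\[
\norm{d}_{\ell^2}^2\le\Big(\sup_{k\in\ZZ_0}\sum_{k_1+k_2=k}\frac{1}{|k_1|^{2(1-\alpha)}|k_2|^{2(1+s)}}\Big)\,\norm{\Phi}_{\ell^2}^2\,\norm{\Psi}_{\ell^2}^2 .
\]
It remains to verify that the displayed Schur sum is finite uniformly in $k$. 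Here the hypotheses enter sharply: $\alpha<3/4$ gives the exponent $2(1-\alpha)>1/2$, and $s>-3/4$ gives $2(1+s)>1/2$; consequently the two exponents sum to more than $1$, which is exactly the condition under which the convolution of these two power weights is bounded uniformly in $k$.

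The main obstacle is precisely this uniform bound on the Schur sum together with the sharp accounting of exponents. A single Cauchy--Schwarz produces the \emph{squared} weights $|k_1|^{-2(1-\alpha)}$ and $|k_2|^{-2(1+s)}$, so the relevant summability threshold is $1/2$ rather than $1$; this is why the natural hypotheses appear as $\alpha<3/4$ and $s>-3/4$ rather than the cruder $\alpha<1/2$, $s>-1/2$. To make the estimate of the Schur sum rigorous one splits the range of summation according to whether $|k_1|$ or $|k_2|$ is comparable to $|k|$ or small, and checks in each regime that the two conditions $2(1-\alpha)>1/2$ and $2(1+s)>1/2$ (hence their sum exceeding $1$) force convergence with a bound independent of $k$. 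Because the present $B_2$ differs from the operator treated in~\cite{B-I-T} only by the harmless unimodular phase, this convolution estimate and the exponent analysis are identical to those carried out there; compare the companion statement in Lemma~\ref{L:B2}. I would therefore structure the argument around the reduction above and invoke that computation for the uniform Schur bound rather than reproduce it in full.
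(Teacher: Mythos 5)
Your proposal is correct and takes essentially the same approach as the paper's own treatment: the paper does not prove Lemma~\ref{L:B21} itself but defers it to \cite{B-I-T}, and the argument there is exactly your reduction --- drop the unimodular phase, pass to the weighted sequences $\Phi,\Psi$, use $s+\alpha\ge 0$ to distribute $|k|^{s+\alpha}$ onto $|k_1|$ and $|k_2|$, and close with Cauchy--Schwarz plus a uniform-in-$k$ bound on the Schur sum $\sum_{k_1+k_2=k}|k_1|^{-2(1-\alpha)}|k_2|^{-2(1+s)}$, which holds because both exponents are positive and, by $\alpha<3/4$ and $s>-3/4$, sum to more than $1$. Nothing essential is missing: the regime-splitting you sketch (small $|k_1|$, small $|k_2|$, both comparable to or larger than $|k|$) is the standard convolution estimate and goes through exactly as you describe, with your accounting of where each hypothesis enters matching the cited computation.
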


\begin{lemma}\label{L:B3}
	Let $s\geq 0$. Then the trilinear operator $B_{3}$
	defined in~(\ref{dbp10}) maps $(\dot{H}^{s})^3$ into $\dot{H}^{s+2}$
	and satisfies the estimate
\begin{equation*}
\Vert B_{3}(\phi,\psi,\xi)\Vert _{\dot{H}^{s+2}}\leq
c(s) \Vert \phi \Vert _{\dot{H} ^{s}}\Vert \psi \Vert _{\dot{H}^{s}}\Vert
\xi \Vert _{\dot{H}^{s}}.
\end{equation*}
\end{lemma}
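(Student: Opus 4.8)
The plan is to reduce the asserted $\dot H^{s}\times\dot H^{s}\times\dot H^{s}\to\dot H^{s+2}$ bound to the $\ell^2$-boundedness of finitely many positive trilinear convolution operators. Since the phase factor $e^{3i(k_1+k_2)(k_2+k_3)(k_1+k_3)t}$ has modulus one, I would discard it and work with the pointwise majorant
\[
|k|^{s+2}\,|B_3(\phi,\psi,\xi)_k|\leq \sum_{\substack{k_1+k_2+k_3=k\\ \mathrm{nonres}}}\frac{|k|^{s+2}\,|\phi_{k_1}|\,|\psi_{k_2}|\,|\xi_{k_3}|}{|k_1|\,|k_1+k_2|\,|k_2+k_3|\,|k_1+k_3|},
\]
so that the final constant is automatically independent of $t$. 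On the non-resonant set each of the four denominator factors is a nonzero integer, hence $\geq 1$ in absolute value (and $k_1\neq 0$ by the zero-mean convention), so the kernel is well defined; the only danger is that some factors are $O(1)$ while others are as large as $|k|$. I abbreviate $a=k_1+k_2$, $b=k_2+k_3$, $c=k_1+k_3$, and split $|k|^{s+2}=|k|^{s}\cdot|k|^{2}$.

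The heart of the matter is the gain of two full derivatives, i.e.\ controlling $|k|^{2}/(|k_1|\,|a|\,|b|\,|c|)$. Here I would use the two algebraic identities $a+b+c=2k$ and $a+c=k+k_1$ (equivalently $b=k_2+k_3$ and $k=k_1+b$). From $a+b+c=2k$ one of $|a|,|b|,|c|$ is $\geq\tfrac{2}{3}|k|$, and I split into three regimes accordingly. If $|a|\geq\tfrac23|k|$ (resp.\ $|c|\geq\tfrac23|k|$) I bound $|k|/|a|$ (resp.\ $|k|/|c|$) by $\tfrac32$ and use $|k|\leq|k_1|+|b|$ (from $k=k_1+b$) on the other power of $|k|$; if instead $|b|\geq\tfrac23|k|$ I bound $|k|/|b|$ by $\tfrac32$ and use $|k|=|a+c-k_1|\leq|a|+|c|+|k_1|$ on the remaining $|k|$, after which the spare $|k_1|$ cancels the factor $1/|k_1|$. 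In every regime the numerator frequencies cancel completely and one is left with
\[
\frac{|k|^{2}}{|k_1|\,|a|\,|b|\,|c|}\leq C\Big(\frac{1}{|k_1|\,|a|}+\frac{1}{|k_1|\,|c|}+\frac{1}{|a|\,|b|}+\frac{1}{|a|\,|c|}+\frac{1}{|b|\,|c|}\Big),
\]
a finite sum of reciprocals of two \emph{distinct}-index factors (note that the ``same index'' kernel $1/(|k_1|\,|b|)$ never appears, since $b=k-k_1$).

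For the remaining weight I would distribute $|k|^{s}\leq C(s)\big(|k_1|^{s}+|k_2|^{s}+|k_3|^{s}\big)$, valid for $s\geq0$, charging the $s$ derivatives to a single variable; because $|k|\geq1$ on $\ZZ_0$ we have $\|\cdot\|_{\dot H^{0}}\leq\|\cdot\|_{\dot H^{s}}$, so the remaining two variables may be measured in $\dot H^{0}$. This leaves finitely many model operators, a typical one being (after writing $k_1=k-k_2-k_3$)
\[
\sum_{k_2,k_3}\frac{P_{k-k_2-k_3}\,Q_{k_2}\,R_{k_3}}{|k-k_2|\,|k-k_3|}\leq\Big(\sum_{k_2,k_3}\frac{1}{|k-k_2|^{2}\,|k-k_3|^{2}}\Big)^{1/2}\Big(\sum_{k_2,k_3}P_{k-k_2-k_3}^{2}Q_{k_2}^{2}R_{k_3}^{2}\Big)^{1/2},
\]
where $P,Q,R$ denote the (possibly weighted) absolute Fourier coefficients. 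The first factor is bounded uniformly in $k$ by $\sum_{j\in\ZZ_0}|j|^{-2}<\infty$ (the non-resonance lower bounds guarantee the shifted denominators never vanish); squaring the whole inequality, summing over $k$, and applying Fubini reassembles the second factor into $\|P\|_{\ell^2}^{2}\|Q\|_{\ell^2}^{2}\|R\|_{\ell^2}^{2}$. Summing the finitely many contributions yields the claim with a constant depending only on $s$.

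The step I expect to be the main obstacle is precisely the extraction of the \emph{full} two derivatives: a careless application of the triangle inequality always leaves a frequency in the numerator, which would cost a derivative and break the estimate. The resolution is the case distinction above together with the identity $a+c=k+k_1$ in the regime where $|b|$ dominates; verifying that this, combined with the smoothing factor $1/|k_1|$, removes every numerator and produces only two-distinct-index reciprocal kernels is the delicate bookkeeping, and is exactly the computation carried out in \cite{B-I-T}.
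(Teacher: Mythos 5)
Your proof is correct: discarding the unimodular phase, the kernel majorization based on the identity $(k_1+k_2)+(k_2+k_3)+(k_1+k_3)=2k$, the three-case analysis that produces only the five two-distinct-factor kernels (correctly avoiding $1/(|k_1|\,|k_2+k_3|)$, which would break the argument), and the concluding Cauchy--Schwarz/Fubini step with $\sum_{j\neq 0}|j|^{-2}<\infty$ are all sound, with a constant manifestly independent of $t$. The paper itself does not prove Lemma~\ref{L:B3} but defers it to \cite{B-I-T}, and your self-contained argument is essentially the same elementary convolution-estimate approach carried out there.
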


\begin{lemma} \label{L:B30}
	If $0<s\le1$, then
\begin{equation*}
    \|B_{30}(u,u,v)\|_{\dot{H}^{s}}+
    \|B_{30}(u,v,u)\|_{\dot{H}^{s}}+
    \|B_{30}(v,u,u)\|_{\dot{H}^{s}}\leq
    \frac{C}{N^{s}}\|u\|_{\dot{H}^{0}}^{2}\|v\|_{\dot{H}^{s}}.
\end{equation*}%
	If $s\le0$ and $p=-s\leq 1$, $\alpha >0$, $p+2\alpha <5/3$, %
	and $\alpha<5/6$, then
\begin{equation*}
    \|B_{30}(u,u,v)\|_{\dot{H}^{s}}+
    \|B_{30}(u,v,u)\|_{\dot{H}^{s}}+
    \|B_{30}(v,u,u)\|_{\dot{H}^{s}}\leq
    \frac{C(p,\alpha)}{N^{2\alpha}}
    \|u\|_{\dot{H}^{0}}^{2}\|v\|_{\dot{H}^{s}}.
\end{equation*}
\end{lemma}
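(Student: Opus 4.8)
The plan is to reduce each $\dot H^s$ bound to a uniform estimate for a weighted trilinear sum, and then to exploit the two structural features of $B_{30}$: the degree-four smoothing denominator $k_1(k_1+k_2)(k_2+k_3)(k_1+k_3)$, in which (by non-resonance) every pair-sum is nonzero, hence $\ge1$ in modulus; and the projections $\mathcal Q$, which force $|k_2|,|k_3|>N$ in every summand. First I would fix a representative product, say $B_{30}(u,u,v)$, set $U_k=|u_k|$ and $V_k=|k|^s|v_k|$, and bound $\|\,|k|^sB_{30}(u,u,v)\|_{\ell^2}$ by a Schur-type Cauchy–Schwarz argument (equivalently by duality against $z\in\dot H^{-s}$), reducing matters to showing that a two-fold sum of the kernel is finite uniformly in the output index $k$.

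\textbf{Case $0<s\le1$.} Here only a single factor $N^{-s}$ is available, since the $\dot H^s$ weight must be deposited on $v$. For the two products $B_{30}(u,u,v)$ and $B_{30}(u,v,u)$, in which $v$ occupies a $\mathcal Q$-slot, I would write the relevant $v$-coefficient as $|k_\bullet|^{-s}V_{k_\bullet}$ and use $|k_\bullet|^{-s}\le N^{-s}$ directly (as $|k_\bullet|>N$ and $s>0$); then, absorbing the output weight via $|k|^s\le|k_1|^s+|k_2+k_3|^s$ together with $s\le1$, a degree-three denominator survives and the residual trilinear sum converges by Cauchy–Schwarz. The only genuinely asymmetric product is $B_{30}(v,u,u)$, where $v$ sits in the unprojected slot $k_1$ and carries no frequency restriction; there I would split into the region $|k_1|\le\tfrac12\max(|k_2|,|k_3|)$, in which a pair-sum satisfies $|k_1+k_2|\gtrsim|k_2|>N$ and yields $N^{-s}$, and its complement $|k_1|\gtrsim N$, in which the factor $|k_1|^{-s}$ already present supplies $N^{-s}$.

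\textbf{Case $s\le0$.} With $p=-s$, the output weight $|k|^{-p}$ is now itself a smoothing factor, so the $\dot H^{-p}$ norm of $v$ costs nothing extra and both high-frequency factors are free to feed the $N$-gain. I would extract $|k_2|^{-\alpha}|k_3|^{-\alpha}\le N^{-2\alpha}$ from the smoothing budget (using $|k_1+k_2|\gtrsim|k_2|$ and $|k_1+k_3|\gtrsim|k_3|$ in the relevant regions), deposit the weight $|k_\bullet|^{-p}$ on $v$, and reduce to a residual multilinear sum. The convergence of that sum, once the fractional power $2\alpha$ for the squeezing and the weight $p$ for $v$ have been subtracted from the degree-four denominator, is precisely what forces $\alpha<5/6$ and $p+2\alpha<5/3$.

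I expect the main obstacle to be the negative-regularity case, and within it the regions where one of the pair-sums $k_1+k_2$, $k_2+k_3$, $k_1+k_3$ is small (near-resonant) while the three frequencies are comparable: there the crude bound forfeits the $N$-gain, and one must balance the fractional power $\alpha$ pulled out for the squeezing against the denominator powers still needed to keep the two- and three-fold summations convergent. Tracking this balance across the competing regions is what pins down the exponent constraints. In the positive case the analogous, milder difficulty is confined to the single asymmetric term $B_{30}(v,u,u)$ and is dispatched by the region split above; the remaining manipulations are routine convolution estimates of the kind already carried out in \cite{B-I-T}.
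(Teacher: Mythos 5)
You should know at the outset that the paper never proves this lemma: the appendix explicitly states that all of these estimates are imported from \cite{B-I-T}, so the only proof to compare against is the one there, whose architecture (duality against $z\in\dot H^{-s}$, absolute values, splitting into frequency regions, convolution estimates) your plan correctly reproduces. Your positive-regularity half is essentially right and checkable: for $B_{30}(u,u,v)$ and $B_{30}(u,v,u)$ the factor $N^{-s}$ comes for free from $|k_\bullet|^{-s}\le N^{-s}$ on the $\mathcal Q$-projected $v$-slot, the subadditivity $|k|^s\le|k_1|^s+|k_2+k_3|^s$ (this is exactly where $s\le1$ enters) moves the output weight onto the denominator, and the surviving trilinear forms, with kernels such as $|k_1+k_2|^{-1}|k_2+k_3|^{-1}|k_1+k_3|^{-1}$ or $|k_1|^{-1}|k_1+k_3|^{-1}$, are indeed bounded on $(\ell^2)^4$ by elementary Cauchy--Schwarz; your two-region treatment of the asymmetric term $B_{30}(v,u,u)$ is the correct fix for the unprojected slot.

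The genuine gap is in the case $s=-p\le0$, and it is twofold. First, the claim that ``the $\dot H^{-p}$ norm of $v$ costs nothing extra'' is false: writing $|v_{k_\bullet}|=|k_\bullet|^{p}V_{k_\bullet}$ introduces a growing factor $|k_\bullet|^{p}$, which the output weight $|k|^{-p}$ offsets only where $|k|\gtrsim|k_\bullet|$; elsewhere it must be charged, via $|k_\bullet|^{p}\lesssim|k|^{p}+|k_i+k_j|^{p}$, to the pair-sum denominator complementary to the $v$-slot (your closing budget does subtract $p$ from the degree-four denominator, so the proposal contradicts itself on this point). Second, and decisively, the constraints $\alpha<5/6$ and $p+2\alpha<5/3$ are the entire content of this half of the lemma, and they are never derived; you only assert that convergence of the residual sum ``forces'' them. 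The crude scheme that suffices in the positive case---extract the gain, discard the leftover unit-sized factors, apply Cauchy--Schwarz---provably cannot reach the stated range: in the region $|k_1|\le N/2$, extracting $N^{-2\alpha}$ from $|k_1+k_2|^{-\alpha}|k_1+k_3|^{-\alpha}$ and discarding $|k_1|^{-1}$ and $|k_2+k_3|^{-1}$ leaves the kernel $|k_1+k_2|^{-(1-\alpha)}|k_1+k_3|^{-(1-\alpha)}$, and for $\alpha$ near $5/6$ the test $a=\delta_1$, $b=c=M^{-1/2}\mathbf{1}_{(N,M]}$, $d$ the normalized indicator of the corresponding output range, makes that form grow like $M^{-3/2}\bigl(M^{5/6}\bigr)^2=M^{1/6}$. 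So for large $\alpha$ one must retain all four denominator factors and run sharper one-dimensional convolution lemmas (of the type $\sum_n|n|^{-2a}|m-n|^{-2b}\lesssim|m|^{1-2a-2b}$) region by region, including the regions where the exponent remaining on one pair-sum is negative (e.g.\ $1-\alpha-p<0$ when the $v$-weight lands there) and its growth must be redistributed. That analysis is precisely where $5/6$ and $5/3$ come from in \cite{B-I-T}, and it is absent from your proposal.
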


\begin{lemma}\label{L:R3}
	Let $s>1/2$. Then the trilinear operator $R_{3}$ defined in (\ref{eq:13b}) maps $(\dot{H}^{s})^{3}$ into $\dot{H}^{s}$ and satisfies the estimate
\begin{equation*}
    \Vert R_{3}(\phi,\psi,\xi)\Vert _{\dot{H}^{s}}\leq C(s) \Vert \phi\Vert _{\dot{H}%
      ^{s}}\Vert \psi\Vert _{\dot{H}^{s}}\Vert \xi\Vert
    _{\dot{H}^{s}}.
\end{equation*}
\end{lemma}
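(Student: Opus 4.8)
The plan is to reduce the estimate to the Banach-algebra property of $H^s(\TT)$ for $s>1/2$, after disposing of both the oscillatory phase and the smoothing factor. First I would discard the phase: since $\abs{e^{3i(k_1+k_2)(k_2+k_3)(k_1+k_3)t}}=1$, the pointwise-in-$t$ bound
\[
\abs{R_3(\phi,\psi,\xi)_k}\leq \sum_{k_1+k_2+k_3=k}\frac{\abs{\phi_{k_1}}}{\abs{k_1}}\,\abs{\psi_{k_2}}\,\abs{\xi_{k_3}}
\]
holds, so the resulting estimate is automatically independent of $t$. Next I absorb the smoothing factor by setting $\tilde\phi_{k}:=\abs{\phi_k}/\abs{k}$, which is legitimate because $k_1=0$ never occurs ($\phi$ has mean zero) and which satisfies $\norm{\tilde\phi}_{\dot H^{s+1}}=\norm{\phi}_{\dot H^s}$.

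Now I would reinterpret the right-hand side as a triple convolution. Let $\Phi,\Psi,\Xi$ be the functions on $\TT$ whose Fourier coefficients are the nonnegative numbers $\tilde\phi_k$, $\abs{\psi_k}$, $\abs{\xi_k}$. Then the displayed bound says exactly that $\abs{R_3(\phi,\psi,\xi)_k}\leq (\Phi\Psi\Xi)_k$, the $k$-th Fourier coefficient of the pointwise product $\Phi\Psi\Xi$. Since homogeneous norms depend only on the moduli of Fourier coefficients, one has $\norm{\Phi}_{\dot H^{s+1}}=\norm{\phi}_{\dot H^s}$, $\norm{\Psi}_{\dot H^s}=\norm{\psi}_{\dot H^s}$, $\norm{\Xi}_{\dot H^s}=\norm{\xi}_{\dot H^s}$, and, dropping the $k=0$ mode,
\[
\norm{R_3(\phi,\psi,\xi)}_{\dot H^s}\leq \norm{\Phi\Psi\Xi}_{\dot H^s}\leq \norm{\Phi\Psi\Xi}_{H^s}.
\]

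It then remains to prove the product estimate $\norm{\Phi\Psi\Xi}_{H^s}\leq C(s)\norm{\Phi}_{H^s}\norm{\Psi}_{H^s}\norm{\Xi}_{H^s}$, i.e. that $H^s(\TT)$ is a Banach algebra for $s>1/2$; I would obtain it by iterating the bilinear estimate $\norm{fg}_{H^s}\leq C\norm{f}_{H^s}\norm{g}_{H^s}$, proved in the standard way through a high--low (paraproduct) decomposition based on $\langle k\rangle^s\lesssim \langle k_1\rangle^s+\langle k_2\rangle^s$ for $k=k_1+k_2$ and estimating the low-frequency factor in $L^\infty$. Finally, since every function involved has mean zero, $\norm{\,\cdot\,}_{H^s}$ and $\norm{\,\cdot\,}_{\dot H^s}$ are equivalent (as $\langle k\rangle\asymp\abs{k}$ for $\abs{k}\geq 1$), and $\norm{\Phi}_{\dot H^s}\leq\norm{\Phi}_{\dot H^{s+1}}=\norm{\phi}_{\dot H^s}$; collecting the constants yields the claimed bound.

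The heart of the matter is the product estimate precisely at the threshold $s>1/2$: this is exactly the range in which $H^s(\TT)\hookrightarrow L^\infty$, equivalently in which $\sum_k\langle k\rangle^{-2s}<\infty$, so that the low-frequency factor can be pulled out in $L^\infty$ via Cauchy--Schwarz; for $s\le 1/2$ this mechanism fails and one is forced to perform the further differentiation by parts carried out in Section~\ref{irregular}. I would also remark that the smoothing factor $1/k_1$ plays no essential role in \emph{this} lemma (it only improves the bound), in sharp contrast to $R_{\text{3res}}$ and $B_{30}$, where the gain of one derivative in the first argument is indispensable.
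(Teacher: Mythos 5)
Your proposal is correct and is in substance the same proof the paper relies on (the paper defers this lemma to \cite{B-I-T}): after discarding the unimodular phase and the harmless factor $1/k_1$, everything reduces to the convolution estimate built on $\langle k\rangle^s\lesssim\langle k_1\rangle^s+\langle k_2\rangle^s+\langle k_3\rangle^s$ together with Cauchy--Schwarz using $\sum_k\langle k\rangle^{-2s}<\infty$, which is exactly the Banach-algebra property of $H^s(\TT)$ for $s>1/2$ that you invoke. Your closing observations --- that the smoothing factor $1/k_1$ is inessential for this particular lemma, and that $s>1/2$ is precisely the threshold whose failure forces the further differentiation by parts of Section~\ref{irregular} --- are also accurate.
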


\begin{lemma} \label{L:R3nres1}
Let $0\leq s \leq 1$, $\a\geq 0$. Then the operator $R_{\emph{3nres1}}$ in (\ref{R3nres1}) satisfies the estimate
\begin{align*}
\norm{R_{\emph{3nres1}}(\phi,\psi,\xi)}_{\dot H^s}
\leq C N^{s+1+\alpha}\norm{\phi}_{\dot H^0}\norm{\psi}_{\dot H^{-\alpha}}\norm{\xi}_{\dot H^0} +C N^{1+\alpha} \norm{\phi}_{\dot H^0}\norm{\psi}_{\dot H^{-\alpha}}\norm{\xi}_{\dot H^s}.
\end{align*}
\end{lemma}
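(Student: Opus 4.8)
The plan is to bound $\norm{R_{\text{3nres1}}(\phi,\psi,\xi)}_{\dot H^s}$ directly by estimating the $\ell^2_k$ norm of $|k|^s$ times the absolute value of the defining sum. First I would reduce to the two model summands displayed in (\ref{R3nres1}): $R_{\text{3nres}}(\phi,\mathcal P\psi,\xi)$, where $|k_2|\leq N$, and $R_{\text{3nres}}(\phi,\mathcal Q\psi,\mathcal P\xi)$, where $|k_2|>N$ and $|k_3|\leq N$. Since the phase $e^{3i(k_1+k_2)(k_2+k_3)(k_3+k_1)t}$ is unimodular it is discarded by the triangle inequality, so the whole matter becomes an absolute-value estimate for a convolution sum. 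The two structural gains I would exploit are exactly the two \emph{smoothed factors} pointed out after (\ref{R3nres1}). The weight $1/|k_1|$ makes $\{|\phi_{k_1}|/|k_1|\}\in\ell^1$ with $\ell^1$-norm $\leq C\norm{\phi}_{\dot H^0}$, by Cauchy--Schwarz against $\sum_{k_1}|k_1|^{-2}<\infty$. A projection onto $\{|k|\leq N\}$ turns the projected factor into a finitely supported sequence; for $\mathcal P\xi$ this costs a factor $N^{1/2}$ via $\sum_{|k_3|\leq N}|\xi_{k_3}|\leq CN^{1/2}\norm{\xi}_{\dot H^0}$, while for $\mathcal P\psi$ measured in $\dot H^{-\alpha}$ it costs $N^{\alpha+1/2}$, since $\sum_{|k_2|\leq N}|\psi_{k_2}|\leq\big(\sum_{|k_2|\leq N}|k_2|^{2\alpha}\big)^{1/2}\norm{\psi}_{\dot H^{-\alpha}}\leq CN^{\alpha+1/2}\norm{\psi}_{\dot H^{-\alpha}}$.

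To create the $\dot H^s$ weight I would use the elementary bound $|k|^s\leq|k_1|^s+|k_2|^s+|k_3|^s$, valid for $0\leq s\leq1$ by subadditivity of $t\mapsto t^s$ applied to $|k|=|k_1+k_2+k_3|$, and treat the three resulting pieces separately. Whenever the derivative lands on an index carried by a $\mathcal P$-projected factor it is bounded by $N^s$ and pulled out of the sum; when it lands on $k_3$ in the first model term it is absorbed into $\norm{\xi}_{\dot H^s}$. In each of these pieces the surviving two-index convolution is closed by Young's inequality in the pattern $\ell^1*\ell^1*\ell^2\hookrightarrow\ell^2$, using the two $\ell^1$ gains above together with one genuine $\ell^2$ factor. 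Tracking powers, the piece of the first model term where $|k|^s$ falls on $k_2$ (so $|k_2|^s\leq N^s$, $\xi$ kept in $\dot H^0$), and the whole second model term (where $\mathcal P\xi$ is present, so $\xi$ naturally sits in $\dot H^0$), produce the first contribution $N^{s+1+\alpha}\norm{\phi}_{\dot H^0}\norm{\psi}_{\dot H^{-\alpha}}\norm{\xi}_{\dot H^0}$; the piece where $|k|^s$ falls on $k_3$ produces the second contribution $N^{1+\alpha}\norm{\phi}_{\dot H^0}\norm{\psi}_{\dot H^{-\alpha}}\norm{\xi}_{\dot H^s}$.

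The delicate piece, which I expect to be the main obstacle, is the term of the first model summand in which $\xi$ is \emph{unprojected} and the derivative falls on $k_1$, leaving the factor $|k_1|^{s-1}|\phi_{k_1}|$. For $s<1/2$ this is still in $\ell^1$ and the scheme above applies, but for $1/2\leq s\leq1$ it is only $\ell^2$ (with norm $\norm{\phi}_{\dot H^{s-1}}\leq\norm{\phi}_{\dot H^0}$), so the naive Young argument fails: the convolution of two $\ell^2$ sequences need not lie in $\ell^2$. Here I would freeze the finitely many values $|k_2|\leq N$, use translation invariance of the $\ell^2$-norm to reduce to the bilinear convolution of $b:=|k_1|^{s-1}|\phi_{k_1}|$ with $|\xi|$, and close it by the one–dimensional product estimate $\norm{BX}_{L^2(\TT)}\leq C\norm{B}_{\dot H^{1-s}}\norm{X}_{\dot H^s}$, valid for $0\leq s\leq1$ on mean-zero functions because one of $\dot H^{1-s}$, $\dot H^s$ embeds into $L^\infty$ (and both into $L^4$ when $s=1/2$). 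Identifying $\norm{b}_{\dot H^{1-s}}=\norm{\phi}_{\dot H^0}$, this yields $\norm{\phi}_{\dot H^0}\norm{\xi}_{\dot H^s}$ with only the $N^{\alpha+1/2}$ cost from the $\psi$-summation, which is dominated by $N^{1+\alpha}$.

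Finally, the analogous dangerous piece of the second model term, where $\psi$ is high-frequency and measured in the weaker norm $\dot H^{-\alpha}$, is handled by the same bilinear product estimate applied to $\phi$ (smoothed by $1/|k_1|$) and $\psi$, with $\mathcal P\xi$ supplying the $\ell^1$ gain. The essential point there is that the single power $1/|k_1|$ is what compensates the weight $|k_2|^{\alpha}$ incurred when passing from $\psi$ to $\dot H^{-\alpha}$, so that $\phi$ and $\psi$ may be paired by the product rule with $\phi$ remaining in $\dot H^0$; collecting the $\mathcal P\xi$ and weight factors then reproduces the claimed powers of $N$.
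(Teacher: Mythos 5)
Your handling of the first summand $R_{\text{3nres}}(\phi,\mathcal P\psi,\xi)$ of (\ref{R3nres1}) is correct and complete: discarding the unimodular phase, splitting $|k|^s\leq|k_1|^s+|k_2|^s+|k_3|^s$, harvesting the $\ell^1$ gains from $|\phi_{k_1}|/|k_1|$ and from the projected factor, and closing the delicate piece $|k_1|^{s-1}|\phi_{k_1}|$ (for $1/2\leq s\leq 1$) with the product estimate $\norm{BX}_{L^2}\leq C\norm{B}_{\dot H^{1-s}}\norm{X}_{\dot H^s}$ after freezing $k_2$. The genuine gap is your treatment of the second summand $R_{\text{3nres}}(\phi,\mathcal Q\psi,\mathcal P\xi)$, i.e.\ your last paragraph. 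On that piece the unprojected, high-frequency slot is $\psi$, and the key claim --- that the power $1/|k_1|$ ``compensates the weight $|k_2|^{\alpha}$'' so that $\phi$ and $\psi$ can be paired with $\phi$ in $\dot H^0$ and $\psi$ in $\dot H^{-\alpha}$ --- cannot work: $k_1$ and $k_2$ are independent summation variables, so decay in $k_1$ cannot absorb growth in $k_2$. The bilinear estimate you would need, $\norm{A\Psi}_{\dot H^{s}}\lesssim\norm{A}_{\dot H^{1}}\norm{\Psi}_{\dot H^{-\alpha}}$, is false, since multiplication by a fixed smooth function never raises the regularity of $\Psi$ (its high frequencies pass straight through to the product). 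For the same reason, when the weight $|k|^s$ falls on $k_2$ you cannot replace $|k_2|^s$ by $N^s$, because on this summand $|k_2|>N$.

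This gap cannot be repaired, because the lemma as printed is false whenever $s>0$ or $\alpha>0$. Take $\phi=\xi=e^{ix}$ and $\psi=e^{iMx}$ with $M>N$. Then $\mathcal P\psi=0$, so the first summand vanishes, while the second reduces to the single nonresonant triple $(k_1,k_2,k_3)=(1,M,1)$, giving
\begin{equation*}
\norm{R_{\text{3nres1}}(\phi,\psi,\xi)}_{\dot H^s}=(M+2)^s,
\end{equation*}
whereas the right-hand side of the stated estimate equals $C\big(N^{s+1+\alpha}+N^{1+\alpha}\big)M^{-\alpha}$, since $\norm{\phi}_{\dot H^0}=\norm{\xi}_{\dot H^0}=\norm{\xi}_{\dot H^s}=1$ and $\norm{\psi}_{\dot H^{-\alpha}}=M^{-\alpha}$; letting $M\to\infty$ yields a contradiction. (The paper gives no proof of this lemma --- it defers to \cite{B-I-T} --- and the norm placement here is evidently a transcription slip: the roles of $\psi$ and $\xi$ are exchanged between the two summands, so the norms must be symmetrized accordingly.) What is true, and what the paper actually uses in (\ref{u-5}), (\ref{u-17}) and (\ref{u-17'}) where every slot is $u$ or $v$, is a bound of the form
\begin{align*}
\norm{R_{\text{3nres1}}(\phi,\psi,\xi)}_{\dot H^s}
&\leq CN^{s+1+\alpha}\norm{\phi}_{\dot H^0}\left(\norm{\psi}_{\dot H^{-\alpha}}\norm{\xi}_{\dot H^0}+\norm{\psi}_{\dot H^0}\norm{\xi}_{\dot H^{-\alpha}}\right)\\
&\quad+CN^{1+\alpha}\norm{\phi}_{\dot H^0}\left(\norm{\psi}_{\dot H^{-\alpha}}\norm{\xi}_{\dot H^s}+\norm{\psi}_{\dot H^s}\norm{\xi}_{\dot H^{-\alpha}}\right),
\end{align*}
and this your own machinery does prove: apply your first-summand argument to the second summand with $\psi$ and $\xi$ swapped, letting the projected slot carry the $\dot H^{-\alpha}$ norm at cost $N^{\alpha+1/2}$ and the unprojected slot carry the $\dot H^0$ or $\dot H^s$ norm (via Young's inequality, or the product estimate when the weight falls on $k_1$). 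In short, the machinery is fine; the error is forcing the printed norm placement onto the second summand, where no argument can make it hold.
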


\begin{lemma} \label{L:B4}
	Let $s\geq 0$ and $\epsilon \in (0,\frac{1}{2})$. Then the
	multi-linear operator $B_{4}$ maps $(\dot{H}^{s})^{4}$ into
	$\dot H^{s+\epsilon}$ and satisfies the estimate
\begin{equation*}
    \Vert B_{4}(\phi,\psi,\xi,\eta )\Vert _{\dot{H}^{s+\epsilon }}\leq
    C(s,\epsilon )\Vert \phi\Vert _{\dot{H}^{s}}\Vert \psi\Vert
    _{\dot{H}^{s}}\Vert \xi\Vert _{\dot{H}^{s}}\Vert \eta \Vert
    _{\dot{H}^{s}}.
\end{equation*}
\end{lemma}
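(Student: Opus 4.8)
Since $B_4=B_4^1+B_4^2$, my plan is to prove the estimate for each model operator separately and add; throughout, the oscillatory factor $e^{i\Phi(\ve k)t}$ has modulus one and is simply discarded, so the claim is really an estimate on the kernels. I would begin with $B_4^1$ and argue by duality: since
\[
\norm{B_4^1(\phi,\psi,\xi,\eta)}_{\dot H^{s+\epsilon}}
=\sup\Big\{\,\abs{\langle B_4^1(\phi,\psi,\xi,\eta),g\rangle}\ :\ \norm{g}_{\dot H^{-(s+\epsilon)}}\le 1\,\Big\},
\]
it is enough to bound the positive quadrilinear form
\[
S:=\sum_{k_1+k_2+k_3+k_4=k}^{\text{nonres}}
\frac{\abs{\phi_{k_1}}\,\abs{\psi_{k_2}}\,\abs{\xi_{k_3}}\,\abs{\eta_{k_4}}\,\abs{g_k}}
{\abs{k_1+k_2}\,\abs{k_1+k_3+k_4}\,\abs{k_2+k_3+k_4}}
\]
by $C(s,\epsilon)\norm{\phi}_{\dot H^s}\norm{\psi}_{\dot H^s}\norm{\xi}_{\dot H^s}\norm{\eta}_{\dot H^s}$. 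Passing to the weighted sequences $a^{(1)}_{k_1}=\abs{k_1}^{s}\abs{\phi_{k_1}}$ (and analogously $a^{(2)},a^{(3)},a^{(4)}$ for $\psi,\xi,\eta$) and $b_k=\abs{k}^{-(s+\epsilon)}\abs{g_k}$, each of unit $\ell^2$ norm, recasts $S$ as the pairing of $a^{(1)}a^{(2)}a^{(3)}a^{(4)}b$ against the symbol
\[
m(\ve k)=\frac{\abs{k}^{s+\epsilon}}{\abs{k_1}^{s}\abs{k_2}^{s}\abs{k_3}^{s}\abs{k_4}^{s}\,\abs{D_1}\abs{D_2}\abs{D_3}},
\qquad D_1=k_1+k_2,\ D_2=k_1+k_3+k_4,\ D_3=k_2+k_3+k_4.
\]

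The heart of the matter is a pointwise bound on $m$. The key algebraic identity is $D_1+D_2+D_3=2k$, whence $\max_i\abs{D_i}\ge\tfrac{2}{3}\abs{k}$; together with non-resonance (each $D_i$ is a nonzero integer, so $\abs{D_i}\ge 1$) this lets me bound the largest denominator below by a multiple of $\abs{k}$ and retain the other two for convolution sums. Since $\abs{k}\le\abs{k_1}+\abs{k_2}+\abs{k_3}+\abs{k_4}\le 4\max_j\abs{k_j}$, placing the weight $\abs{k}^{s}$ on the largest frequency (where it is absorbed by the matching $\abs{k_j}^{-s}$) leaves a net factor $\abs{k}^{\epsilon-1}$ to be distributed among the two surviving denominators. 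A case distinction according to which frequency is largest and which $D_i$ dominates then reduces $S$, through iterated Cauchy--Schwarz and the discrete Young (convolution) inequality, to one-dimensional series of the type $\sum_{m\in\ZZ_0}\abs{m}^{-(2-2\epsilon)}$. These converge exactly when $2-2\epsilon>1$, i.e. $\epsilon<1/2$, which is precisely the range in the statement; this is where the threshold $1/2$ originates.

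For $B_4^2$ the symbol carries the extra numerator $(k_3+k_4)$ and the extra denominator $k_1$. Writing $k_3+k_4=k-D_1$ gives $\abs{k_3+k_4}\le\abs{k}+\abs{D_1}$: the $\abs{D_1}$ piece cancels the $\abs{D_1}$ in the denominator and leaves $\abs{k_1}^{-1}\abs{D_2}^{-1}\abs{D_3}^{-1}$, which carries even more smoothing through the factor $\abs{k_1}^{-1}$, while the $\abs{k}$ piece is reabsorbed into the two remaining denominators exactly as above (in the regime where $\abs{k_1}$ is small and $\abs{k_3+k_4}$ large, both $D_2$ and $D_3$ are comparable to $\abs{k}$, so the quotient is harmless). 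Hence $B_4^2$ obeys the same class of convolution bounds as $B_4^1$, and the two together yield the asserted estimate for $B_4$ with a constant $C(s,\epsilon)$; the constituent sums are identical in form to the quadrilinear estimates carried out in \cite{B-I-T}.

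The step I expect to be the main obstacle is precisely this pointwise symbol analysis and its case bookkeeping: in every regime one must simultaneously absorb the largest frequency's Sobolev weight, spend exactly one denominator to manufacture the decay $\abs{k}^{-(1-\epsilon)}$, and verify that the two remaining denominators genuinely permit the convolution sums to be summed without forfeiting the $\epsilon$-gain --- all while confirming that the borderline series converge exactly at $\epsilon<1/2$. The additional factors in $B_4^2$ make this accounting more delicate, but they do not alter the essential mechanism.
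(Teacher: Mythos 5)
Your proposal is correct, and it follows essentially the same route as the proof the paper relies on (the paper proves nothing here itself but defers to \cite{B-I-T}): duality against $g\in\dot H^{-(s+\epsilon)}$, discarding the unimodular factor $e^{i\Phi(\ve k)t}$, the non-resonance bounds $|D_i|\ge 1$ together with $D_1+D_2+D_3=2k$ (hence $\max_i|D_i|\ge\tfrac{2}{3}|k|$), absorption of $|k|^s$ by the largest frequency, and iterated Cauchy--Schwarz reducing all cases to $\sum_{m\in\ZZ_0}|m|^{2\epsilon-2}<\infty$, which is exactly where $\epsilon<1/2$ enters. The one imprecise spot is the parenthetical handling of the $|k|$-piece of $B_4^2$ (the surviving denominators need not be \emph{comparable} to $|k|$, and the case where the largest factor is $D_3=k-k_1$ requires a further elementary splitting such as $|k|^{\epsilon}\le|k-k_2|^{\epsilon}+|k_2|^{\epsilon}$ before the convolution sums close), but this is precisely the bookkeeping you flag, and it does go through.
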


\smallskip

\par\smallskip\noindent
\textbf{Acknowledgement\,:}    K. S. and E. S. T. would like to thank the Freie Universit\"at Berlin for the kind hospitality where this work was initiated. This work was supported in part by the Minerva Stiftung/Foundation, and  by the NSF
grants DMS-1009950, DMS-1109640 and DMS-1109645.

\end{document}